\documentclass{amsart}

\usepackage{amsmath,amssymb,amsfonts,amstext,amsthm,bm}
\usepackage{url}
\usepackage{graphicx}
\usepackage{hyperref}
\usepackage{tikz}
\usepackage[all,cmtip]{xy} 
\usepackage{multicol}
\usepackage{verbatim}
\input xy
\xyoption{all}
\usepackage[all]{xy}
\usepackage{xcolor}
\usepackage{enumitem}


\newcommand{\trop}{\operatorname{trop}}

\newtheorem{coro}{Corollary}[section]

\newtheorem{dfn}[coro]{Definition}

\newtheorem{lemma}[coro]{Lemma}

\newtheorem{prop}[coro]{Proposition}

\newtheorem{rem}[coro]{Remark}

\newtheorem{thm}[coro]{Theorem}

\newtheorem{ex}[coro]{Example}

\let \sub=\subset

\let \al=\alpha
\let \pr=\prime
\let \la=\lambda

\let \sub=\subset
\let \mb=\mathbb
\let \mc=\mathcal
\let \wt=\widetilde

\let \ra=\rightarrow

\usepackage{faktor}

\title{Exploring tropical differential equations
}
\date{\today}

\author{Ethan Cotterill
\and Cristhian Garay L\'opez
\and Johana Luviano
}
\thanks{The second author was supported by CONACYT through Project 299261. The third author was supported by PAPIIT IN108320}
 
\begin{document}

\subjclass{13N99, 14T10, 13P15, 52B20.}
\keywords{Differential algebra, tropical geometry, power series, Newton polygons, initial forms.}
\maketitle 
\begin{abstract}
 
  The purpose of this paper is fourfold. The first is to develop the theory of tropical differential algebraic geometry from scratch; the second is to present the tropical fundamental theorem  for {\it  differential algebraic geometry}, and show how it may be used to extract combinatorial information about the set of power series solutions to a given system of differential equations, both in the archimedean (complex analytic) and in the non-archimedean (e.g., $p$-adic) settings. A third and subsidiary aim is to show how tropical differential algebraic geometry is a natural application of semiring theory, and in so doing, contribute to the valuative study of differential algebraic geometry. 
We use this formalism to extend the fundamental theorem of partial differential algebraic geometry to differential fraction field of the ring of formal power series in arbitrarily (finitely) many variables; in doing so we produce new examples of non-Krull valuations that merit further study in their own right.
\end{abstract}

\section{Introduction}

Solving systems of  differential equations is in general a difficult task. Indeed, the results of  \cite{DL84} imply that computing formal power series solutions to systems of algebraic partial differential equations with (complex) formal power series coefficients is algorithmically impossible. 

On the other hand, the rapidly-evolving area of {\it tropical} differential algebraic geometry, introduced in \cite{grigoriev2017tropical}, provides an algebraic framework for recording precise combinatorial information about the set of formal power series solutions to systems of differential equations. The foundation of this tropical theory is an analogue of the {\it tropical fundamental theorem} (itself a tropical analogue of the classical Nullstellensatz) adapted to the differential-algebraic context.

A {\it tropical fundamental theorem} in its basic form refers to a correspondence between tropicalizations of geometric objects on one side and tropicalizations of algebraic objects on the other; and constructing such correspondences is the primary impetus for the application of tropical methods in commutative algebra.
By {\it tropicalization} we mean schemes $(R,{\rm S},v)$ in the sense of \cite{GG,KM19}, 
in which $R$ is a commutative ring (possibly endowed with some extra structure), ${\rm S}$ is a commutative (additively idempotent) semiring, and $v:R\rightarrow {\rm S}$ is a map that satisfies the usual properties of a seminorm. We are typically interested in the set of algebraic solutions $\text{Sol}(\Sigma)$ of a
system of algebraic equations $\Sigma\subset R^{\pr}$ for some  $R$-algebra $R^{\pr}$. Whenever the {\it tropicalization} $v(\text{Sol}(\Sigma))$ of $\text{Sol}(\Sigma)$ coincides with the solution set $\text{Sol}(v(\Sigma))$ of the {\it tropicalization} $v(\Sigma)$ of the original system, we say that a  tropical fundamental theorem holds in this context.

In the differential-algebraic setting, the first tropical fundamental theorem proved was for systems of ordinary differential algebraic  equations over rings of formal power series; this is the main result of \cite{AGT16}. In this case, the valued ring that underlies the tropicalization scheme is $R=K[\![t]\!]$, where 
$K$ is an uncountable, algebraically closed field of characteristic zero; the $R$-algebra $R^{\pr}$ is equal to the ring of ordinary differential polynomials $K[\![t]\!] \{x_1,\ldots,x_n\}$; and the $\text{Sol}(\Sigma)$ are power series solution sets to systems of ordinary differential equations $\Sigma$. This tropical fundamental theorem was subsequently generalized to systems of partial differential equations in \cite{FGH20}. 

There is a third characterization of tropical varieties in terms of initial ideals; see \cite[Thm 3.2.5]{MS}. In \cite[Lem 2.6]{HG19}, its analogue was stated for systems of ordinary differential equations; and in \cite[Thm 45]{FGH20+}, the latter result was generalized to the partial differential setting. 

This fundamental theorem  gives three equivalent characterizations of the {\it tropical DA varieties}; here DA stands for {\it differential algebraic}. It is the most important result in the area to date, as it opens the door to applying tropical methods to a broad array of problems in differential algebraic geometry. 

In this note, we re-frame tropical differential algebraic geometry in general, and the tropical fundamental theorem in particular, within a theory of {\it non-classical} non-Archimedean seminorms, in which there is no total order on   the target idempotent semiring. We single out a number of non-classical non-Archimedean  seminorms that may be worthy of further study. 

Our work also represents an important first step in the systematic valuative study of support and tropicalization maps; and we discuss potential applications of the fundamental theorem to algebraic differential equations, in both the archimedean and non-archimedean settings.

The main protagonists here are certain {\it vertex polynomials}, whose (finite) supports determine the Newton polyhedra of (infinite) power series, and the  semiring $V\mb{B}[T]$ they determine. $V\mb{B}[T]$ is the target of our non-classical valuation on $K[\![T]\!]$, and it has many useful properties. Crucially, we show that it is {\it integral}, which enables us to extend our valuative theory from $K[\![T]\!]$ to its quotient field $K(\!(T)\!)$; to naturally distinguish a {\it ring of integers} $K[\![T]\!]\sub K(\!(T)\!)^{\circ} \sub K(\!(T)\!)$ analogous to the  valuation ring of a classical valuation; and to formulate geometric translation maps that underlie a robust theory of {\it initial forms} associated to fixed choices of weight vectors.

\subsection{Roadmap}
The remainder of the paper following this introduction is organized as follows. Section~\ref{Section_DifEqs} is an introduction to differential algebraic geometry over (formal) power series semirings. The differential structures of semirings of differential polynomials are codified by natural $\mb{N}^m$-actions, where $m$ is the number of derivatives. These are slightly more elegant to formulate within the framework of {\it semirings of twisted polynomials} that we introduce in section~\ref{tp}.

In section~\ref{Tropical}, we introduce tropical differential equations and their associated tropical power series {\it solutions}, which are ``corner loci" of vertex polynomials; 
see Definition~\ref{Def_trop_vanishing} for a formal statement. In Example~\ref{Ex_Solution} we illustrate explicitly how these two concepts work together.   

In section~\ref{section_Idempotent_Semiring}, we study 
$V\mb{B}[T]$,
which we construct as a quotient of boolean power series by a semiring congruence. Lemma~\ref{New_congruence} establishes that our construction exactly reproduces the vertex set semiring introduced in \cite{FGH20}. 
Proposition~\ref{prop_reformulation_solution} establishes that the set of tropical power series solutions to a single tropical differential polynomial is indeed its corner locus, provided we appropriately adapt the traditional definition of corner locus of tropical polynomials to the differential setting.
In subsection~\ref{SubSection_Order}, we introduce a refinement of the  order relation on vertex polynomials, based upon the concept of {\it relevancy}; see Definition~\ref{dfn_relevant}.
This refinement allows us to recover some useful features of Krull valuations for this case. Subsection~\ref{alternative_characterizations}, on the other hand, is a more detailed study of vertex polynomials through the lens of (Newton) polyhedra. Propositions~\ref{Prop_HProperty}, \ref{finite_generation_A-tilde}, and \ref{char_of_prod} establish structural results for vertex sets, and their behavior under unions and Minkowski sums. We  examine in subsection~\ref{Sub_Section_Relationship} how $V\mb{B}[T]$ relates to the semiring $\mathcal{P}_{\mathbb{Z}^m}$ of lattice polytopes in $\mathbb{Z}^m$ studied in \cite{KM19}. 

In section~\ref{Section_QVIF}, we introduce {\it non-Archimedean seminorms} for algebras defined over a field $K$ with targets in semirings, {\it support series} over the boolean semifield $\mb{B}$ for $K$-formal power series, and {\it support vectors} for solutions to systems of tropical algebraic differential equations. A key technical fact, established in Theorem~\ref{Longue_prop}, 
is that the support maps $\text{sp}: K[\![T]\!] \ra \mb{B}[\![T]\!]$ are norms that commute with the respective differentials of $K[\![T]\!]$ and $\mb{B}[\![T]\!]$. We further show in Theorem~\ref{VBT_theorem} that composing the support maps with the quotient projection $\mb{B}[\![T]\!] \ra V\mb{B}[T]$ yields  {\it valuations} $\text{trop}: K[\![T]\!] \ra V\mb{B}[T]$; that is, the norms $\text{trop}$ are in fact multiplicative. It follows that our tropicalization scheme for differential algebraic geometry is a {\it differential enhancement} in the sense of J. Giansiracusa and S. Mereta of the classical tropicalization associated to the trop valuation (cf. \cite{GM}). 
In section \ref{Section_Initial_forms}, we extend the valuative theory  from $K[\![T]\!]$ to 
$K[\![T]\!][x_{i,J}]$; and in Definition~\ref{initial_form}, we reconstruct the initial forms of \cite{FGH20+} in terms of these extensions and the relevancy relation introduced in section~\ref{section_Idempotent_Semiring}. 

In section~\ref{Section_TFT}, we introduce tropical DA varieties and we present our fundamental theorem \ref{EFT} of tropical differential algebraic geometry, which characterizes these varieties in three distinct ways. While Theorem~\ref{EFT} was previously proved in \cite{FGH20}, our approach is novel and emerges naturally from the theory developed in the preceding sections. Thinking of tropical DA varieties as the supports of power series solutions associated to differential ideals of $K[\![T]\!]\{x_1,\ldots,x_n\}$, it is likewise natural to ask for an analogue of Theorem~\ref{EFT} for differential ideals of  $K(\!(T)\!)\{x_1,\ldots,x_n\}$. 

In section \ref{Section_Fromringtofield}, we extend the valuative theory of section \ref{Section_QVIF} from $K[\![T]\!]$ to $K(\!(T)\!)$. The key technical ingredient is Proposition~\ref{Prop_MC}, which establishes that the semiring $V\mb{B}[T]$ of vertex polynomials is integral, so canonically injects in its semifield of fractions $V\mb{B}(T)$. In Theorem~\ref{EEFT}, 
we partially extend the fundamental theorem to $K(\!(T)\!)$, by producing $K(\!(T)\!)$-analogues of two of the three characterizations of tropical DA varieties in Theorem \ref{EFT}; namely, as a set of weight vectors $w \in \mb{B}[\![T]\!]^n$ that define monomial-free initial differential ideals, and as a set of (coordinate-wise) supports of solutions associated to a given system of algebraic differential equations. 
It is worth noting that the proof presented here diverges from previous approaches, as it is based on generalized valuation theory and the relevancy relation.
Theorem~\ref{prop_local_trop_basis}, meanwhile, generalizes a result of \cite{HG19final} that describes {\it tropical bases} of differential ideals with respect to prescribed support vectors 
when $m=1$ to the case of arbitrary $m$. Subsection~\ref{initial_degenerations} is devoted to {\it initial degenerations} defined by weight vectors $w \in \mb{B}[\![T]\!]^n$, which are the geometric counterparts of ideals of initial forms. The  valuation $\text{trop}:K[\![T]\!] \ra V\mb{B}[T]$ introduced in section~\ref{Section_QVIF} extends naturally to a valuation $\text{trop}:K(\!(T)\!) \ra V\mb{B}(T)$, which in turn enables us to define its {\it ring of integers} $K(\!(T)\!)^{\circ} \sub K(\!(T)\!)$. For every weight vector $w \in \mb{B}[\![T]\!]^n$, we define a ``translation" by $w$ specified explicitly by \eqref{translation_map} that sends differential polynomials over $K(\!(T)\!)$ to differential polynomials over $K(\!(T)\!)^{\circ}$. Translating differential ideals by weight vectors, we obtain algebraic specialization maps that relate differential ideals to their initial forms, modulo a flatness hypothesis.

In section~\ref{Section_CA} we discuss computational aspects of tropical DA varieties, and we highlight several outstanding problems of valuation-theoretic and polyhedral nature. In the final subsection~\ref{Example} we evaluate a particular tropical differential polynomial $P$, and we describe a general strategy for computing the solution set of {\it any} $P$.

\subsection{Conventions} 
Outside of Section~\ref{tp}, every algebraic structure considered here will be commutative. We let $\mathbb{N}$ denote the semiring of natural numbers 
endowed with the usual operations. 
 Given a nonzero natural number $m$, we let $\{e_1,\ldots,e_m\}$ denote the usual basis of the free 
 $\mathbb{N}$-module $\mathbb{N}^m$. Given multi-indices $I=(i_1,\ldots,i_m)$ and $J=(j_1,\ldots,j_m) \in \mathbb{N}^m$, we set $\lVert I\rVert_\infty:=\text{max}_{}\{i_1,\ldots,i_m\}=\text{max}(I)$ and $J-I:=(j_1-i_1,\ldots,j_m-i_m)\in\mathbb{Z}^m$. Given any tuple $T=(t_1,\ldots,t_m)$ and any multi-index $I=(i_1,\ldots,i_m)\in \mathbb{N}^m$, we let $T^I$ denote the formal monomial $t_1^{i_1}\cdots t_m^{i_m}$. 
 Finally, whenever $A$ is a subset of $\mathbb{N}^m$, we set $A-I:=\{J-I \::\:J\in A\}$.
 
 \section{Differential algebraic geometry over power series semirings}
\label{Section_DifEqs}

In this section, we develop the algebraic architecture that underlies the semiring approach to differential algebraic geometry. Recall that a semiring is an algebraic structure which satisfies all the axioms of rings, possibly with the exception of the existence of additive inverses, and throughout this section, ${\rm S}=({\rm S},+,\times,0,1)$ will denote a  semiring with additive and multiplicative units 0 and 1, respectively. Semirings comprise a category, namely that of $\mathbb{N}$-algebras, that is a proper enlargement of the category of rings. The monograph \cite{JG} is a useful reference for the general theory of semirings.

 For tropical differential algebraic geometry, the central objects of study are differential polynomials with coefficients in a power series semiring, which generalize in a straightforward way the differential polynomials with coefficients in a differential field of characteristic zero studied in \cite{R50}. 
Algebraic differential equations in the traditional ring-theoretic sense are the zero loci of differential polynomials; unfortunately, as we shall see later, simply allowing all coefficients to belong to an arbitrary semiring ${\rm S}$ results in an unsatisfactory theory of tropical differential equations.

\begin{dfn}
{Fix an integer $m\geq1$. The semiring ${\rm S}[\![T]\!] =({\rm S}[\![T]\!] ,+,\times,0,1)$ of} formal power series {with coefficients in ${\rm S}$ in the variables $T=(t_1,\ldots,t_m)$ is the set of expressions $a=\sum_{I\in A}a_IT^I$ with $A\subseteq\mathbb{N}^m$ and $a_I\in S$, endowed with the usual operations of term-wise sum and convolution product.} 

\end{dfn}
\noindent That is, given $a=\sum_{I\in A}a_IT^I$ and $b=\sum_{I\in B}b_IT^I$, we have $a+ b=\sum_{I\in A\cup B}(a_I+b_I)T^I$ and $a b=\sum_{I\in A+ B}(\sum_{J+K=I}a_Jb_K)T^I$.

We let $\text{Supp}(a)$ denote the subset of the index set $A\subseteq\mathbb{N}^m$ in $a=\sum_{I\in A}a_I T^I$ whose associated coefficients $a_I$ are nonzero; this is the {\it support} of $a$. 
Given $A\subseteq\mathbb{N}^m$, we let $e_{\rm S}(A)$ denote the series $\sum_{I\in A}T^I$ in ${\rm S}[\![T]\!] $.

Let $n$ and $m$ be  nonzero natural numbers. We shall consider the polynomial ring ${\rm S}[\![T]\!][x_{i,J}\::\:i,J]$ where $i\in\{1,\ldots,n\}$ and $J\in \mathbb{N}^m$. The {\it order} of $x_{i,J}$ is $\lVert J\rVert_\infty$ and a  monomial  of order less than or equal to $r\in\mathbb{N}$ is any expression of the form
\begin{equation}
\label{differential_monomial}
    E:=\prod_{{1\leq i\leq n,\; ||J||_\infty\leq r}}x_{i,J}^{m_{i,J}}.  
\end{equation}

We may specify the  monomial \eqref{differential_monomial} using the array $M:=(m_{i,J}) \in \mathbb{N}^{n\times(r+1)^m}$, in which case we write $E=E_M$. With this notation, an element $P\in {\rm S}[\![T]\!][x_{i,J}\::\:i,J]$ is a finite sum $P=\sum_{i}a_{M_i}E_{M_i}$
 of scalar multiples of differential monomials   with nonzero coefficients $a_{M_i}\in {\rm S}[\![T]\!] $.

\subsection{Differential semirings}
Our next task is to equip these semirings with a differential structure.
\begin{dfn}
{A map $d:{\rm S}\rightarrow {\rm S}$ is a} \emph{derivation} {if it is linear and satisfies the Leibniz rule with respect to products of elements in ${\rm S}$. A} differential semiring {is a pair $({\rm S},D)$ consisting of a semiring ${\rm S}$ together with a finite collection $D$ of derivations on ${\rm S}$ that commute pairwise}.
\end{dfn}

The usual definition of partial derivations in a ring of formal power series with ring coefficients extends  to the semiring coefficient case. Namely, given $a\in {\rm S}\setminus\{0\}$, $I=(i_1,\ldots,i_m)\in \mathbb{N} ^m$ and $j=1,\ldots,m$, we set
\begin{equation}\label{rule_deriv}
\tfrac{\partial}{\partial t_j}(aT^I):=
\begin{cases}
i_jaT^{I-e_j}&\text{if }i_j\neq0; \text{ and}\\
0&\text{otherwise}.
\end{cases}
\end{equation} 

That \eqref{rule_deriv} is well-defined follows from the way in which $\mathbb{N}$ acts on ${\rm S}$. 
Moreover, we have $\tfrac{\partial}{\partial t_j}(aT^IbT^J)=aT^I\tfrac{\partial}{\partial t_j}(bT^J)+bT^J\tfrac{\partial}{\partial t_j}(aT^I)$; by linearity it follows that $\tfrac{\partial}{\partial t_j}(\sum_{I}a_IT^I)=\sum_{I}\tfrac{\partial}{\partial t_j}(a_IT^I)$ defines a derivation on ${\rm S}[\![T]\!]$. 

We now extend $D=\{\tfrac{\partial}{\partial t_1},\ldots,\tfrac{\partial}{\partial t_m}\}$ from ${\rm S}[\![T]\!] $ to ${\rm S}[\![T]\!][x_{i,J}:i,J]$ by setting $\tfrac{\partial}{\partial_{t_i}} x_{k,J}= x_{k,J+e_i}$.  
Hereafter we use either ${\rm S}[\![T]\!] \{x_1,\ldots,x_n\}$ or  ${\rm S}_{m,n}$ as a shorthand for $({\rm S}[\![T]\!][x_{i,J}:i,J],D)$. We further set ${\rm S}_{0,0}:=({\rm S},D=\{0\})$, ${\rm S}_{m,0}:={\rm S}_m=({\rm S}[\![T]\!] ,D)$, and ${\rm S}_{0,n}:=({\rm S}\{x_1,\ldots,x_n\},D)$. The inclusion ${\rm S}_{a,b}\subset {\rm S}_{c,d}$ is an extension of differential semirings whenever $0\leq a\leq c$ and $0\leq b\leq d$. 

The following map is the key to interpreting elements of ${\rm S}_{m,n}$ as differential operators with coefficients in ${\rm S}$.  

\begin{dfn}
  {Given a differential semiring $(S,D=\{d_1,\ldots,d_m\})$, let $\Theta_{({\rm S},D)}:\mathbb{N}^m\rightarrow {\rm End}({\rm S})$ denote the $\mathbb{N}$-module action in which $e_i$ acts as $d_i$. That is, the endomorphism of ${\rm S}$ corresponding to $(j_1,\ldots,j_m)=J  \in \mathbb{N}^m$ is $\Theta_{({\rm S},D)}(J):= d_1^{j_1}\cdots d_m^{j_m}$. Whenever there is no risk of confusion, we will write $\Theta(J)$ in place of $\Theta_{({\rm S},D)}(J)$.}
\end{dfn}

Each differential monomial $E$ as in \eqref{differential_monomial} singles out an evaluation map $E: {\rm S}_m ^n\rightarrow {\rm S}_m $ 
that sends $a=(a_1,\ldots,a_n)\in {\rm S}_m ^n$ to

\begin{equation}\label{evaluation_map}
 E(a):=\prod_{m_{i,J}}(\Theta_{{\rm S}_m}(J)a_i)^{m_{i,J}}=\prod_{m_{i,J}}\bigl(\tfrac{\partial^{\sum_ij_i}(a_i)}{\partial t_1^{j_1}\cdots \partial t_m^{j_m}}\bigr)^{m_{i,J}}.
\end{equation}

The assignment \eqref{evaluation_map} extends by linearity to yield an evaluation map $P: {\rm S}_m ^n\rightarrow {\rm S}_m $ for every $P \in {\rm S}_{m,n}$; and deciding when $a\in {\rm S}_m ^n$ should be deemed a {\it solution} of $P$ will depend on the value $P(a)$. We will see later that the precise definition of solution we adopt will depend on the type of base semiring ${\rm S}$ under consideration.

\begin{rem}
\label{commutation}
{Let $\Theta_{{\rm S}_{m,n}}:\mathbb{N}^m\rightarrow {\rm S}_{m,n}$ denote the   $\mathbb{N}$-module action of $\mathbb{N}^m$ on ${\rm S}_{m,n}$. Because ${\rm S}_{m,0}\subset {\rm S}_{m,n}$ is an extension of differential semirings, it follows from \eqref{evaluation_map} that differentiation of differential polynomials commutes with evaluation, i.e., we have $(\Theta_{{\rm S}_{m,n}}(J) P)(a)=\Theta_{{\rm S}_m}(J) (P(a))$ for every $P\in {\rm S}_{m,n}$, $a\in {\rm S}_m ^n$ and $J  \in \mathbb{N}^m$.
}
\end{rem}

\subsection{Intermezzo: the semiring of twisted polynomials}
\label{tp}

In this subsection, we explain how to adapt differential polynomials to the semiring-theoretic context using differential modules over differential semirings. The resulting formalism is of a piece with contemporary presentations of differential algebra; see, e.g.,  \cite{K10}.

To begin, recall from \cite[Ch.14]{JG} that modules over a (not-necessarily commutative) semiring ${\rm S}$ are pairs $({\rm M},\cdot)$ consisting of a commutative monoid ${\rm M}=({\rm M},+,0_M)$ together with a scalar multiplication $\cdot:{\rm S}\times {\rm M}\rightarrow {\rm M}$ that satisfies the usual axioms for modules over rings, along with the additional requirement that $s\cdot 0_{\rm M}=0_{\rm M}=0_R\cdot m$ for every $s \in {\rm S}$ and $m \in {\rm M}$.

Similarly, the definition of differential modules over a differential semiring is an adaptation of the usual one for differential rings. We will focus on the case of ${\rm S}_m$. 

\begin{dfn}
{A }\emph{differential module} {over ${\rm S}_m$ is a pair $({\rm M},D_{\rm M})$ in which ${\rm M}$ is a module over ${\rm S}[\![T]\!] $ equipped with additive maps $D_{\rm M}=\{\delta_1,\ldots,\delta_m\}$ for which }$$\delta_i(a\cdot m)=a\cdot\delta_i(m)+\tfrac{\partial a}{\partial t_i}\cdot m \quad \text{ for every } a\in {\rm S}[\![T]\!] ,\:m\in {\rm M}.$$
\end{dfn}

\begin{dfn}
{The }\emph{semiring of twisted polynomials} {${\rm S}[\![T]\!] \{d_1,\ldots,d_m\}$ is the additive semigroup $\bigoplus_{I\in\mathbb{N}^m}{\rm S}[\![T]\!] \cdot D^I$, where $D^I=d_1^{i_1}\cdots d_m^{i_m}$, and in which we impose the following rules for the product: 
\begin{enumerate}
\item $d_ia=ad_i+\tfrac{\partial a}{\partial t_i}$ for every $a\in {\rm S}[\![T]\!] $; and
\item $d_id_j=d_jd_i$ for every $i,j=1,\ldots,m$.
\end{enumerate}
}
\end{dfn}

Via the semiring of twisted polynomials ${\rm S}[\![T]\!] \{d_1,\ldots,d_m\}$, we may identify the category of differential modules over ${\rm S}_m$ with the category of left ${\rm S}[\![T]\!] \{d_1,\ldots,d_m\}$-modules. Indeed, we have an action of ${\rm S}[\![T]\!] \{d_1,\ldots,d_m\}$ on $({\rm M},D_{\rm M})$ for which $d_i$ acts as $\delta_i$; so ${\rm S}[\![T]\!] \{d_1,\ldots,d_m\}$ acts on ${\rm S}_{m,n}$ and its differential sub-semiring ${\rm S}_m$ with $d_i$ acting as $\tfrac{\partial }{\partial t_i}$. In this way, we recover the actions $\Theta_{{\rm S}_{m,n}}$ and $\Theta_{{\rm S}_{m}}$ described before.

\section{Tropical differential equations and their solutions}
\label{Tropical}

We now introduce tropical differential polynomials and the formal power series solutions to the tropical differential equations they define. In practice, this means applying the constructions of section~\ref{Section_DifEqs} to the case in which ${\rm S}$ is the boolean semiring $\mathbb{B}:=(\{0,1\},+,\times)$. Here $\times$ denotes the usual product, while $a+b=1$ whenever $a$ or $b$ is nonzero. 

A semiring ${\rm S}$ is {\it additively idempotent} if $a+a=a$ for all $a\in {\rm S}$, and  the category of idempotent semirings is precisely the category of $\mathbb{B}$-algebras. Thus the semiring structure map $\mathbb{N}\rightarrow {\rm S}$ factors through the idempotent semiring structure map $\mathbb{B}\rightarrow {\rm S}$ via the (unique) homomorphism $\mathbb{N}\rightarrow \mathbb{B}$. Hereafter, any reference to an idempotent semiring means one that is additively idempotent.

Most of the following concepts were introduced in \cite{FGH20} using subsets of $\mathbb{N}^m$; here we reframe them using the language of formal power series with coefficients in $\mathbb{B}$ in order to maximize their proximity to their classical counterparts; see Remark~\ref{rem_difference}. We refer the curious to the Example ~\ref{Ex_Solution}, or to the more involved worked example in section~\ref{Example}, to see how these concepts apply in practice.

The semiring $\mathbb{B}[\![T]\!]$ consists of the set
$\mathbb{B}[\![T]\!] =\bigl\{a=\sum_{I\in A}T^I\::\:\emptyset\subseteq A\subseteq\mathbb{N}^m\bigr\}$ endowed with the operations $a+ b=\sum_{I\in A\cup B}T^I$ and $a b=\sum_{I\in A+ B}T^I$ for  $a=\sum_{I\in A}T^I$ and $b=\sum_{I\in B}T^I$. Recall that $\mathbb{B}_m=(\mathbb{B}[\![T]\!] ,D)$, where $D=\{\tfrac{\partial}{\partial t_1},\ldots,\tfrac{\partial}{\partial t_m}\}$; in particular, for  
$J  \in \mathbb{N}^m$, the map
 $\Theta_{\mathbb{B}_m}(J): {\mathbb{B}}[\![T]\!] \to {\mathbb{B}}[\![T]\!] $ sends $a=\sum_{I\in A}T^I$ to the series $\Theta_{\mathbb{B}_m}(J)a=\sum_{I\in (A-J)_{\geq0}}T^I$, where 
\begin{equation}
\label{differential_sets}
    (A-J)_{\geq0}:=\left\{ (i_1,\ldots,i_m)\in A-J \:|\:i_1,\ldots,i_m\geq0\right\}.
\end{equation}
Indeed, this follows from the fact that the action of $\mathbb{N}$ used to define \eqref{rule_deriv}  factors through the structure map $\mathbb{B}\rightarrow \mathbb{B}[\![T]\!]$.

\begin{rem}
\label{rem_difference}
{Let $\mathcal{P}(\mathbb{N}^m)$ denote the 
semiring whose elements are subsets of $\mathbb{N}^m$, in which the sum $A+B:=A\cup B$ is the union of sets and the product $AB:=\{I+J\::\:I\in A, J\in B\}$ is the Minkowski product. We use \eqref{differential_sets} to equip $\mathcal{P}(\mathbb{N}^m)$ with a $\mathbb{N}^m$-action that turns it into a differential semiring; in so doing, the support map $\text{Supp}:(\mathbb{B}[\![T]\!] ,D)\xrightarrow[]{}(\mathcal{P}(\mathbb{N}^m),D)$ becomes an isomorphism of differential semirings with inverse $A\mapsto e_{\mathbb{B}}(A)$. It is worth noting here that  $a=\sum_{I\in A}T^I\in \mathbb{B}[\![T]\!] $ and $A\in \mathcal{P}(\mathbb{N}^m)$ are distinct representations of the same object. However, we will see that the representation given by the semiring of supports $\mathcal{P}(\mathbb{N}^m)$  is more suitable for computations; see, e.g., the concrete description of the expression \eqref{differential_sets} in \eqref{concrete_description_sup_der}.}
\end{rem}

Our aim is now to use the elements of the differential semiring $\mathbb{B}_{m,n}$ 
to define tropical differential equations. This requires a bit of care, however, inasmuch as directly copying the definition operative in the ring-theoretic setting produces inadequate results.
Indeed, given $P=\sum_ia_{M_i}E_{M_i}$ in $\mathbb{B}_{m,n}$, the only elements $a\in \mathbb{B}_m ^n$ whose $P$-evaluations \eqref{evaluation_map} satisfy $P(a)=0$ are such that $E_{M_i}(a)=0$ for every $i$; see section~\ref{Example}.

So we will have to work harder in order to produce a useful definition of solution. 
For this purpose, we will make use of the following basic concepts from convex geometry. A {\it polyhedron} $P$ is the intersection of finitely many affine halfspaces in $\mathbb{R}^m$, and it is a {\it polytope} whenever it is bounded. A (proper) {\it face} of the polyhedron $P$ is the intersection of $P$ with a hyperplane $H$ that contains $P$ in one of its two half-spaces. We let $\mathcal{V}(P)$ denote the set of zero-dimensional faces, or {\it vertices} of $P$; and
we let $\text{Conv}(A)$ denote the convex hull of a set $A\subset \mathbb{N}^m$. 
 A polyhedron $P$ is {\it integral} whenever $P=\text{Conv}(P\cap\mathbb{Z}^m)$.
 
\begin{dfn}
\label{Def_New_poly}
{Given $A\subset \mathbb{N}^m$, its} Newton polyhedron 
{$\text{New}(A)$ is the convex hull of the set $\{I+J:I\in A,J\in\mathbb{N}^m\}\subseteq\mathbb{R}_{\geq0}^m$. We say that $A^{\pr}\subseteq A$ is a} spanning set {whenever $\text{New}(A^{\pr})=\text{New}(A)$}.
\end{dfn}

\begin{thm}[See \cite{FGH20}]\label{vertex_poly} {Every $A\subset \mathbb{N}^m$ has $\text{Vert}(A):=\mathcal{V}(\text{New}(A))$ as its (unique) minimal spanning set. In particular, its minimal spanning set is finite}.
   \end{thm}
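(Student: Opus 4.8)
The plan is to show that $\text{Vert}(A) = \mathcal{V}(\text{New}(A))$ is a spanning set, that every spanning set contains it, and that it is finite. The key geometric input is that $\text{New}(A)$ is a (finitely generated) polyhedron of a very particular kind: it is the convex hull of $A + \mathbb{N}^m$, hence it has recession cone equal to $\mathbb{R}_{\geq 0}^m$ and lies in $\mathbb{R}_{\geq 0}^m$. First I would record that $\text{New}(A)$ equals $\text{Conv}(A) + \mathbb{R}_{\geq 0}^m$, so by the Minkowski–Weyl theorem it is a polyhedron; moreover, because its recession cone is pointed (indeed equal to the nonnegative orthant), it has at least one vertex, and its vertex set $\mathcal{V}(\text{New}(A))$ is finite. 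Next, each vertex $v$ of $\text{New}(A)$ must actually lie in $A$: since $\text{New}(A) = \text{Conv}(A) + \mathbb{R}_{\geq 0}^m$, every point of $\text{New}(A)$ is $\sum \lambda_i a_i + u$ with $a_i \in A$, $\lambda_i \geq 0$, $\sum \lambda_i = 1$, $u \in \mathbb{R}_{\geq 0}^m$; if such a point is a vertex (an extreme point), the only way it can fail to be a proper convex combination is if all the $a_i$ appearing with positive weight equal $v$ and $u = 0$, forcing $v = a_i \in A$. Hence $\text{Vert}(A) \subseteq A$, so it makes sense to ask whether it is a spanning subset.

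\smallskip

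For the spanning property, I would argue $\text{New}(\text{Vert}(A)) = \text{New}(A)$. The inclusion $\subseteq$ is immediate from $\text{Vert}(A) \subseteq A$ and monotonicity of $\text{New}(-)$. For $\supseteq$, I would use the standard fact that a pointed polyhedron is the convex hull of its vertices plus its recession cone: $\text{New}(A) = \text{Conv}(\mathcal{V}(\text{New}(A))) + \mathbb{R}_{\geq 0}^m = \text{Conv}(\text{Vert}(A)) + \mathbb{R}_{\geq 0}^m \subseteq \text{New}(\text{Vert}(A))$, where the last containment holds because $\text{New}(B) \supseteq \text{Conv}(B) + \mathbb{R}_{\geq 0}^m$ for any $B$ (indeed with equality). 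This gives $\text{New}(\text{Vert}(A)) = \text{New}(A)$, so $\text{Vert}(A)$ is a spanning set, and it is finite as noted.

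\smallskip

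Finally, for minimality and uniqueness: suppose $A' \subseteq A$ is any spanning set, i.e. $\text{New}(A') = \text{New}(A)$. I claim $\text{Vert}(A) \subseteq A'$. Fix a vertex $v \in \mathcal{V}(\text{New}(A)) = \mathcal{V}(\text{New}(A'))$. By the same argument as above applied to $A'$, the vertex $v$ of $\text{New}(A') = \text{Conv}(A') + \mathbb{R}_{\geq 0}^m$ must lie in $A'$. Hence $\text{Vert}(A) \subseteq A'$ for every spanning set $A'$, which simultaneously shows $\text{Vert}(A)$ is contained in every spanning set (minimality) and that it is the unique minimal one. I expect the main obstacle — or rather the one point needing genuine care rather than routine bookkeeping — to be the justification that extreme points of $\text{Conv}(A) + \mathbb{R}_{\geq 0}^m$ lie in $A$ and that this polyhedron has only finitely many of them; this rests on the pointedness of the recession cone $\mathbb{R}_{\geq 0}^m$ and on $\text{Conv}(A)$ itself having finitely many "relevant" extreme points in the direction of each vertex. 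One clean way to package this is to invoke that $\text{New}(A)$, being of the form (convex set) $+$ (full orthant) inside the orthant, is in fact cut out by finitely many supporting halfspaces with outer normals in $\mathbb{R}_{\geq 0}^m$, reducing everything to the finite-dimensional polyhedral theory; a reference such as \cite{MS} or a standard convexity text suffices. If the paper prefers to avoid invoking Minkowski–Weyl in the non-rational generality, one can instead note that the vertices of $\text{New}(A)$ form an antichain in the product order on $\mathbb{N}^m$ and apply Dickson's lemma to conclude finiteness directly.
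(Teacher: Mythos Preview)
The paper does not give a proof of this statement; it merely cites \cite{FGH20}, and later (see \eqref{newton_decomposition} and the surrounding text) invokes Minkowski--Weyl in the same spirit you do. So your outline is consonant with the paper's point of view, and the remaining steps---vertices of $\text{New}(A)$ lie in $A$, spanning via ``pointed polyhedron $=$ convex hull of vertices plus recession cone'', and minimality by applying the vertex argument to an arbitrary spanning $A'$---are all correct once the polyhedrality of $\text{New}(A)$ is in hand.

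That is exactly where there is a genuine gap in the order of your argument. You write $\text{New}(A)=\text{Conv}(A)+\mathbb{R}_{\geq 0}^m$ and invoke Minkowski--Weyl to conclude it is a polyhedron with finitely many vertices; but Minkowski--Weyl says a set is a polyhedron iff it is (polytope)$+$(polyhedral cone), and for infinite $A$ the set $\text{Conv}(A)$ need not be a polytope, so nothing is yet proved. Your option (a) at the end has the same defect: a set of the form ``(convex set) $+$ orthant'' inside the orthant is \emph{not} in general cut out by finitely many halfspaces---take $A$ on a hyperbola in $\mathbb{R}_{\geq 0}^2$ to see curved boundary. What rescues the situation is precisely that $A\subset\mathbb{N}^m$, and that is exactly the content of Dickson's lemma, which you list only as an alternative for finiteness. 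It is not optional: it is the step that must come \emph{first}. Dickson gives a finite set $M$ of minimal elements of $A+\mathbb{N}^m$, whence $A+\mathbb{N}^m=M+\mathbb{N}^m$ and $\text{New}(A)=\text{Conv}(M)+\mathbb{R}_{\geq 0}^m$ is visibly a polyhedron; after that, every line of your argument goes through unchanged.
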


Given $a=\sum_{I\in A}T^I$ in $\mathbb{B}[\![T]\!] $ we let $V(a):=e_{\mathbb{B}}(\text{Vert}(A))$ denote the {\it vertex polynomial} of $a$. We now have the ingredients necessary to construct solutions of tropical differential polynomials. The following definition differs slightly from the original in \cite{FGH20}, but it is equivalent and has the advantage of being less technical.
 
\begin{dfn}
\label{Def_trop_vanishing}
{An element $a\in \mathbb{B}_m ^n$ is a} solution {of $\sum_{i}a_{M_i}E_{M_i}=P\in \mathbb{B}_{m,n}$ if for every monomial $T^{I}$ of $V(P(a))$, there are at least two distinct terms $a_{M_k}E_{M_k}$ and $a_{M_\ell}E_{M_\ell}$ of $P$ for which $T^{I}$ appears in both $V(a_{M_k}E_{M_k}(a))$ and $V(a_{M_\ell}E_{M_\ell}(a))$}.
\end{dfn}

It is worth emphasizing that to verify whether an $n$-tuple $a\in \mathbb{B}_m ^n$ of (a priori infinite) boolean power series 
satisfies the tropical differential equation determined by a differential polynomial, only a {\it finite} number of conditions need to be checked. 

 We let $\text{Sol}(P)\subset\mathbb{B}_m ^n$ denote the set of solutions of $P$; more generally, given any  collection $\Sigma\subset \mathbb{B}_{m,n}$ of tropical differential polynomials, we let $\text{Sol}(\Sigma)=\bigcap_{P\in \Sigma}\text{Sol}(P)$ denote the set of solutions common to all $P\in \Sigma$.
 
 A natural question that arises is how the tropical vanishing condition introduced in Definition \ref{Def_trop_vanishing} compares with the usual one operative in tropical geometry, in which $a\in \mathbb{B}_m ^n$ is a solution of $\sum_{i}a_{M_i}E_{M_i}=P\in \mathbb{B}_{m,n}$ provided there are at least two distinct terms $a_{M_k}E_{M_k}$ and $a_{M_\ell}E_{M_\ell}$ of $P$ for which $V(P(a))=V(a_{M_k}E_{M_k}(a))=V(a_{M_\ell}E_{M_\ell}(a))$. In the ordinary differential case, the vertex polynomial $V(\sum_{i\in A}t^i)$ is simply the monomial $t^{\text{min}(A)}$, and our definition of tropical vanishing agrees with the classical one. However when $m>1$, classical solutions are also solutions in our sense, while the converse fails to hold in general, as in the following example.

\begin{figure}[!htb]
    \centering
    \includegraphics[scale=0.75]{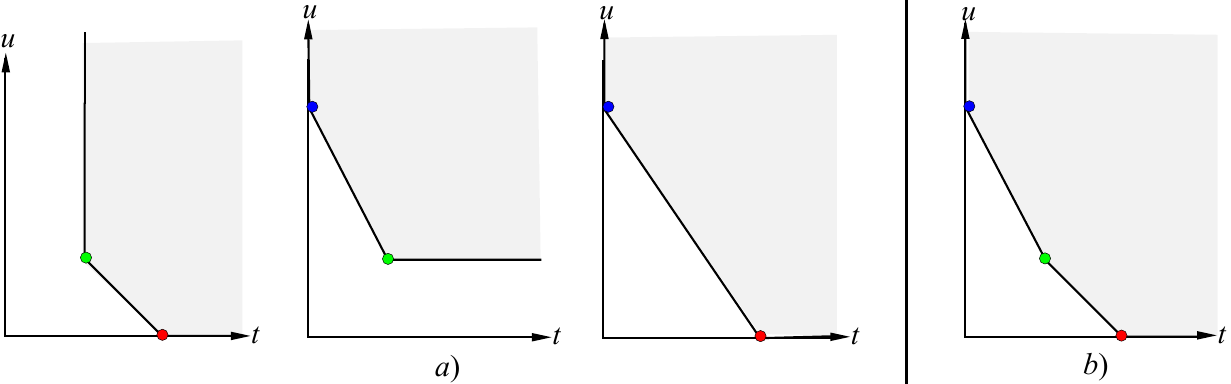}
    \vspace{-.3cm}
    \caption{
    a)  $V(a_{M_i}E_{M_i}(a))$ for $i=1,2,3$; and b)  $V(P(a))$.}
\label{Figura_soluciones}
\end{figure}

\begin{ex}
\label{Ex_Solution}
{
Set
$P=tx_{(1,0)}+ux_{(0,1)}+(t^2+u^3) \in \mathbb{B}[[t,u]]\{x\}$, and $a=t^2+tu+u^3\in \mathbb{B}[[t,u]]$. Then  $P(a)=t(t+u)+u(t+u^2)+(t^2+u^3)=t^2+tu+u^3$, and from Figure~\ref{Figura_soluciones}b we see that $V(P(a))=P(a)$.}

{
For every monomial $a_ME_M$ in the expansion of $P$, we now compute $a_ME_M(a)$ and $V(a_ME_M(a))$; here both series are equal in every instance, as is clear from Figure \ref{Figura_soluciones}a. We also see that $a$ is a solution of $P$ by directly checking against Definition \ref{Def_trop_vanishing}; however, there is no monomial that appears in both $V(a_ME_M(a))$ and $V(P(a))$. 
}
\end{ex}

Vertex polynomials are the key to this theory; we analyze them closely in the next section.

\section{The semiring of vertex polynomials}
\label{section_Idempotent_Semiring}
In section~\ref{Tropical}, the concept of vertex polynomial served primarily to define solutions of tropical differential equations; however, they fit into a broader algebraic framework. In this section, we will start by giving a more algebraic presentation of the semiring of vertex polynomials.

To this end, recall that a  {\it congruence} on a semiring ${\rm S}$ is an equivalence relation $\sim$ on ${\rm S}$ 
for which $a+c\sim b+c$ and $ac\sim bc$ for every $c\in {\rm S}$ whenever $a\sim b$. The semiring structure on ${\rm S}$ then descends to the quotient $\faktor{{\rm S}}{\sim}$, 
and the quotient projection $\pi:{\rm S}\to \faktor{{\rm S}}{\sim}$ becomes a homomorphism of semirings. 

\begin{dfn}
\label{Def_VS}
The \emph{semiring of vertex polynomials}
is the quotient $\faktor{\mathbb{B}[\![T]\!] }{\text{New}}$, where $\text{New}\subset \mathbb{B}[\![T]\!] \times \mathbb{B}[\![T]\!] $ denotes the semiring congruence comprised of pairs of boolean power series with equal Newton polyhedra. 
\end{dfn}

As mentioned in the previous section, vertex polynomials were introduced in an equivalent form in \cite{FGH20} using subsets of $\mathbb{N}^m$; there,
the map $\text{Vert}:\mathcal{P}(\mathbb{N}^m)\xrightarrow[]{}\mathcal{P}(\mathbb{N}^m)$   was shown to satisfy $\text{Vert}^2=\text{Id}$; and the semiring of vertex sets $\mathbb{T}_m=(\mathbb{T}_m,\oplus,\odot)$ was defined to be the image of  $\mathrm{Vert}$, equipped with the operations $A\oplus B=\mathrm{Vert}(A\cup B)$ and $\ A\odot B=\mathrm{Vert}(AB)$. We will show below that $\mathrm{Vert}:\mathcal{P}(\mathbb{N}^m)\rightarrow \mathbb{T}_m$ is precisely the quotient projection $\pi:\mathbb{B}[\![T]\!] \rightarrow \faktor{\mathbb{B}[\![T]\!] }{\text{New}}$. This provides a more intrinsic explanation for the facts that $\mathbb{T}_m$ is a semiring, and that $\mathrm{Vert}$ is a semiring homomorphism.

\begin{lemma}\label{New_congruence}The  relation $\text{New}$  from Definition \ref{Def_VS} is a semiring congruence, and the map $\text{Vert}:\mathcal{P}(\mathbb{N}^m)\xrightarrow[]{}\mathbb{T}_m$  is the quotient projection $\pi:\mathbb{B}[\![T]\!] \rightarrow \faktor{\mathbb{B}[\![T]\!] }{\text{New}}$.
\end{lemma}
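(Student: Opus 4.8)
The plan is to prove the two assertions of Lemma~\ref{New_congruence} in tandem, exploiting the fact that $\text{New}$ is by construction the equivalence relation on $\mathbb{B}[\![T]\!]$ whose classes are the fibers of the map $a \mapsto \text{New}(\text{Supp}(a))$. First I would verify that $\text{New}$ is indeed a semiring congruence. The reflexivity, symmetry, and transitivity are immediate since $\text{New}$ is a fiber relation. For compatibility with the operations, suppose $a \sim b$, i.e.\ $\text{New}(\text{Supp}(a)) = \text{New}(\text{Supp}(b))$, and let $c \in \mathbb{B}[\![T]\!]$ be arbitrary. For addition, $\text{Supp}(a+c) = \text{Supp}(a)\cup\text{Supp}(c)$, and one checks the elementary polyhedral identity $\text{New}(A\cup C) = \text{Conv}(\text{New}(A)\cup\text{New}(C))$; since this depends on $A$ only through $\text{New}(A)$, we get $\text{New}(\text{Supp}(a+c)) = \text{New}(\text{Supp}(b+c))$. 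For multiplication, $\text{Supp}(ab) = \text{Supp}(a)+\text{Supp}(b)$ (Minkowski sum — here there is no cancellation because $\mathbb{B}$ has no additive inverses, so the convolution coefficients never vanish), and one invokes the standard fact that $\text{New}(A+C) = \text{New}(A) + \text{New}(C)$ as Minkowski sum of polyhedra, again depending on $A$ only through $\text{New}(A)$. This gives $ac \sim bc$, so $\text{New}$ is a congruence.

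Next I would identify the quotient $\faktor{\mathbb{B}[\![T]\!]}{\text{New}}$ with $\mathbb{T}_m$ and the projection $\pi$ with $\text{Vert}$. Under the isomorphism of differential semirings $\text{Supp}:\mathbb{B}[\![T]\!]\to\mathcal{P}(\mathbb{N}^m)$ from Remark~\ref{rem_difference} (with inverse $e_\mathbb{B}$), the relation $\text{New}$ transports to the relation on $\mathcal{P}(\mathbb{N}^m)$ identifying two subsets with the same Newton polyhedron. By Theorem~\ref{vertex_poly}, each such class contains a unique minimal element, namely $\text{Vert}(A) = \mathcal{V}(\text{New}(A))$, and $A \sim \text{Vert}(A)$ always holds since $\text{Vert}(A)$ is a spanning set of $A$. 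Hence the set-theoretic quotient is in canonical bijection with the image of $\text{Vert}$, which is exactly $\mathbb{T}_m$, and under this bijection the projection sends the class of $A$ to $\text{Vert}(A)$; that is, $\pi = \text{Vert}$ as maps of sets. Finally, since $\pi$ is a semiring homomorphism onto the quotient semiring, and the quotient operations are precisely $[A]+[B]=[A\cup B]$ and $[A]\cdot[B]=[A+B]$, transporting these through the bijection to $\mathbb{T}_m$ recovers $A\oplus B = \text{Vert}(A\cup B)$ and $A\odot B = \text{Vert}(AB)$, which are the defining operations of $\mathbb{T}_m$. Therefore $\text{Vert}:\mathcal{P}(\mathbb{N}^m)\to\mathbb{T}_m$ is the quotient projection.

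The step I expect to require the most care is the verification of the two polyhedral identities $\text{New}(A\cup C) = \text{Conv}(\text{New}(A)\cup\text{New}(C))$ and $\text{New}(A+C)=\text{New}(A)+\text{New}(C)$, and in particular making sure they hold at the level of \emph{sets} $A \subseteq \mathbb{N}^m$ (not just polyhedra) with the upward-closure convention built into $\text{New}$ — i.e.\ that adding the orthant $\mathbb{N}^m$ before or after taking convex hulls and Minkowski sums does not matter. These are routine but must be stated cleanly, since the whole congruence argument rests on the fact that $\text{New}(\text{Supp}(-))$ is a semiring homomorphism to the semiring of Newton polyhedra (with union-convex-hull as addition and Minkowski sum as multiplication). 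Everything else is bookkeeping via the isomorphism of Remark~\ref{rem_difference} and the uniqueness statement of Theorem~\ref{vertex_poly}.
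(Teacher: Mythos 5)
Your proof is correct and follows essentially the same route as the paper's: both verify the congruence using the polyhedral identities $\text{New}(A\cup C)=\text{Conv}(\text{New}(A)\cup\text{New}(C))$ and $\text{New}(A+C)=\text{New}(A)+\text{New}(C)$, and both then identify $\mathbb{T}_m$ with the quotient by transporting the operations through the bijection between equivalence classes and vertex sets (you invoke Theorem~\ref{vertex_poly}, the paper cites the equivalent \cite[Lem.~3.1]{FGH20}).
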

\begin{proof}
We use the isomorphism $\text{Supp}:\mathbb{B}[\![T]\!] \xrightarrow[]{}\mathcal{P}(\mathbb{N}^m)$ from Remark \ref{rem_difference}. Clearly $\text{New}$ is an equivalence relation. That $\text{New}$ is compatible with sum and product on $\mathbb{B}[\![T]\!] $ follows from the facts that $\text{New}(a+b)=\text{Conv}(\text{New}(a)\cup\text{New}(b))$ and $\text{New}(ab)=\text{New}(a)\text{New}(b)$.
On the other hand, from \cite[Lem. 3.1]{FGH20} we have $\text{New}(a)=\text{New}(b)$ if and only if $\mathrm{Vert}(A)=\mathrm{Vert}(B)$, which means that $e_{\mathbb{B}}:\mathbb{T}_m\to \faktor{\mathbb{B}[\![T]\!] }{\text{New}}$ is a bijection of sets. Finally, the binary operations $\oplus, \odot$ on $\mathbb{T}_m$ satisfy $A\oplus B:=\mathrm{Vert}(A\cup B)=[e_{\mathbb{B}}(A)+e_{\mathbb{B}}(B)]$ and $A\odot B:=\mathrm{Vert}(A B)=[e_{\mathbb{B}}(A)e_{\mathbb{B}}(B)]$, which are precisely those of $\faktor{\mathbb{B}[\![T]\!] }{\text{New}}$.
\end{proof}

Hereafter $V\mathbb{B}[T]$ will denote the set of vertex polynomials, $i:V\mathbb{B}[T]\rightarrow \mathbb{B}[\![T]\!] $ the natural inclusion of sets induced by viewing polynomials as series, and $V:\mathbb{B}[\![T]\!] \rightarrow V\mathbb{B}[T]$ the quotient map. The operations on $V\mathbb{B}[T]$ are defined by $a\oplus b=V(i(a)+i(b))$ and $a\odot b=V(i(a)i(b))$. In particular, the support map $\text{Supp}:V\mathbb{B}[T]\xrightarrow[]{}\mathbb{T}_m$ is a semiring isomorphism.

Our next goal is to recast Definition~\ref{Def_trop_vanishing} in terms of corner loci of sums in $V\mathbb{B}[T]$. Given any sum $s=a_1\oplus\cdots\oplus a_k$ in $V\mathbb{B}[T]$ involving $k \geq 2$ summands, we let $s_{\widehat{i}}:=a_1\oplus\cdots\oplus\widehat{a_i}\oplus\cdots\oplus a_k$ denote the corresponding sum obtained by omitting the $i$-th summand, for every index $i=1, \dots,k$.

\begin{dfn}
{Given a positive integer $k \geq 2$, we say that the sum $s=a_1\oplus\cdots\oplus a_k$ in $V\mathbb{B}[T]$} \emph{tropically vanishes} {in $V\mathbb{B}[T]$ if $s= s_{\widehat{i}}\text{ for every }i=1,\ldots,k.$}
\end{dfn}

\begin{prop}
\label{prop_reformulation_solution}
An element $a\in\mathbb{B}_m ^n$ is a \emph{solution} of $P=\sum_{i=1}^ka_{M_i}E_{M_i} \in \mb{B}_{m,n}$
if and only if $V(P(a))=\bigoplus_{i=1}^kV(a_{M_i}E_{M_i}(a))$ tropically vanishes in $V\mathbb{B}[T]$.
\end{prop}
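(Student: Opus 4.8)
The plan is to unwind both sides of the claimed equivalence and observe that they are literally the same statement once one tracks what ``$T^I$ appears in $V(\cdot)$'' means in terms of support sets. First I would fix notation: write $b_i := V(a_{M_i}E_{M_i}(a)) \in V\mathbb{B}[T]$ for $i=1,\dots,k$, so that $s := \bigoplus_{i=1}^k b_i$ and, by definition of the operations on $V\mathbb{B}[T]$ together with Lemma~\ref{New_congruence}, $\operatorname{Supp}(s) = \mathrm{Vert}\bigl(\bigcup_i \operatorname{Supp}(b_i)\bigr) = \mathrm{Vert}\bigl(\operatorname{Supp}(P(a))\bigr) = \operatorname{Supp}(V(P(a)))$. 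This already gives the identification $V(P(a)) = \bigoplus_{i=1}^k V(a_{M_i}E_{M_i}(a))$ claimed in the statement; the content is entirely in the equivalence ``solution $\iff$ this sum tropically vanishes.''

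Next I would translate ``$s$ tropically vanishes'' into polyhedral language. By definition $s$ tropically vanishes iff $s = s_{\widehat{j}}$ for every $j$, i.e.\ iff $\operatorname{New}\bigl(\bigcup_i \operatorname{Supp}(b_i)\bigr) = \operatorname{New}\bigl(\bigcup_{i\neq j}\operatorname{Supp}(b_i)\bigr)$ for every $j$, which in turn (since the Newton polyhedron of a union is the convex hull of the union of the Newton polyhedra) is equivalent to: no vertex of $\operatorname{New}\bigl(\bigcup_i\operatorname{Supp}(b_i)\bigr)$ is contributed by a single $b_j$ alone. More precisely, for each vertex $V$ of the common Newton polyhedron $\operatorname{New}(P(a))$, the set of indices $i$ such that $V \in \operatorname{New}(\operatorname{Supp}(b_i))$ — equivalently such that $T^V$ appears in $V(b_i)$, using that vertices of a Newton polyhedron lie in every spanning set — has size $\neq 1$; and since that set is nonempty (some $b_i$ must realize the vertex), it has size $\geq 2$. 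This is exactly the condition in Definition~\ref{Def_trop_vanishing}: for every monomial $T^I$ of $V(P(a))$ there are at least two indices $k \neq \ell$ with $T^I$ appearing in both $V(a_{M_k}E_{M_k}(a))$ and $V(a_{M_\ell}E_{M_\ell}(a))$.

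So the two key lemmas I need are: (i) for $A \subseteq \mathbb{N}^m$, a point $v$ is a vertex of $\operatorname{New}(A)$ iff $v \in \operatorname{Vert}(A)$, and removing from $A$ any point not equal to $v$ — in particular, removing an entire subset $A'$ with $v \notin \operatorname{New}(A')$ — does not change whether $v$ is a vertex; and (ii) $\operatorname{New}(A \cup B) = \operatorname{Conv}(\operatorname{New}(A) \cup \operatorname{New}(B))$, hence $\operatorname{Vert}(A \cup B) \subseteq \operatorname{Vert}(A) \cup \operatorname{Vert}(B)$, and a vertex $v$ of $\operatorname{New}(A\cup B)$ lies in $\operatorname{Vert}(A_i)$ precisely for those $i$ with $v \in \operatorname{New}(A_i)$. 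Both of these are standard convex-geometry facts, and (ii) is already invoked in the proof of Lemma~\ref{New_congruence}. With these in hand the proof is a direct chain of equivalences: $a$ is a solution $\iff$ (Definition~\ref{Def_trop_vanishing}) every vertex of $\operatorname{New}(P(a))$ is realized by $\geq 2$ of the $\operatorname{New}(\operatorname{Supp}(b_i))$ $\iff$ for every $j$, omitting $b_j$ does not lose any vertex of $\operatorname{New}(P(a))$ $\iff$ $s = s_{\widehat{j}}$ for all $j$ $\iff$ $s$ tropically vanishes.

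The main obstacle, such as it is, is bookkeeping rather than mathematics: one must be careful that ``$T^I$ appears in $V(a_{M_i}E_{M_i}(a))$'' means $I \in \operatorname{Vert}(\operatorname{Supp}(a_{M_i}E_{M_i}(a)))$, and then argue that $I \in \operatorname{Vert}(\operatorname{Supp}(b_i))$ iff $I$ is a vertex of $\operatorname{New}(\operatorname{Supp}(b_i))$ iff (given that $I$ is already a vertex of the bigger polyhedron $\operatorname{New}(P(a))$) simply $I \in \operatorname{New}(\operatorname{Supp}(b_i))$. This last ``upgrade'' — a point that is a vertex of a polyhedron $Q \supseteq Q_i$ and lies in $Q_i$ is automatically a vertex of $Q_i$ — is the one slightly non-obvious step, and it is where the hypothesis that we only range over monomials of $V(P(a))$ (i.e.\ vertices of the total polyhedron) is essential; it is false for non-vertex lattice points. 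I would state this as a short preliminary observation and then the equivalence falls out immediately.
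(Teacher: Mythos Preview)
Your proposal is correct and follows essentially the same route as the paper's proof: both first observe $V(P(a))=\bigoplus_i V(a_{M_i}E_{M_i}(a))$ from the homomorphism property of $V$, and then identify ``every vertex of $\operatorname{New}(P(a))$ is witnessed by at least two of the $b_i$'' with ``$s=s_{\widehat{j}}$ for all $j$'' via the contrapositive. The paper's argument is terser and leaves implicit the convex-geometry ``upgrade'' (a vertex of $Q\supseteq Q_i$ lying in $Q_i$ is a vertex of $Q_i$) that you rightly isolate as the one nontrivial step.
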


\begin{proof}
We have $P(a)=\sum_{i=1}^ka_{M_i}E_{M_i}(a)$, and as $V$ is a homomorphism, it follows that $V(P(a))=V(\sum_{i=1}^ka_{M_i}E_{M_i}(a))=\bigoplus_{i=1}^kV(a_{M_i}E_{M_i}(a)).$

Suppose now that $a\notin \text{Sol}(P)$; there is then some $t^I\in V(P(a))$ for which $t^I\in V(a_{M_i}E_{M_i}(a))$ for some unique index $i\in\{1,\ldots,k\}$, which means that $V(P(a))$ is not contained in $V(P(a))_{\widehat{i}}$. Conversely, if $V(P(a))$ does not tropically vanish, then $V(P(a))$ fails to be contained in $V(P(a))_{\widehat{i}}$ for some index $i\in\{1,\ldots,k\}$. This means there is some $t^I\in V(P(a))$ that appears as a term in the expansion of $V(a_{M_i}E_{M_i}(a))$ and in no other $V(a_{M_{\ell}}E_{M_{\ell}}(a))$, $\ell \neq i$;
thus $a\notin \text{Sol}(P)$.
\end{proof}


That $V(P(a))$ tropically vanishes means precisely that $V(P(a))$ satisfies the {\it bend relations} of \cite[Sec. 5.1]{GG};
so it is precisely the classical ``corner locus" of a tropical polynomial. Proposition~\ref{prop_reformulation_solution} establishes that $\text{Sol}(P)$ is the corner locus of the map $a\mapsto V(P(a))$, i.e., a ``tropical differential hypersurface"; correspondingly, we refer to $V(P(a))$ as a {\it tropical DA hypersurface}.
Note that tropical vanishing depends strongly on the 
realization of $P(a)$ as a sum of monomials, but not on the {\it value} of $P(a)$; see Remark~\ref{rem_sum_not_value} for an illustrative example.

\subsection{The order relation}
\label{SubSection_Order}
In any idempotent semiring ${\rm S}$, we use $a\leq b$ to mean that $a+b=b$, or equivalently, that $a+c=b$ for some $c$ in ${\rm S}$. Then $\leq$ is an algebraic order relation, as $a\leq b$ implies $a+c\leq b+c$ and $ac\leq bc$ for all $c\in S$. It is a total order if and only if ${\rm S}$ is {\it bipotent}, i.e., such that $a+b \in \{a,b\}$ for every $a$ and $b$. In this context, the {\it least upper bound} of any finite set of elements of $S$ is equal to their sum. We use $a< b$ to mean that $a\leq b$ and $a\neq b$.

Given an idempotent semiring ${\rm S}$, we let ${\rm S}^{\text{op}}$ denote the same semiring endowed with the opposite order: $a\leq b$ if and only if $a+b=a$. The sum of any finite set of elements in ${\rm S}$ then becomes their {\it greatest lower bound}. Hereafter, we let $\ell:{\rm S}\to {\rm S}^{\text{op}}$ denote the identity map. We will put this convention to use later in constructing non-archimedean amoebae; see Remark~\ref{non_arch_amoeba}.

We will now explicitly describe the  order relation on 
$V\mathbb{B}[T]$. Interesting and as-yet unexplored combinatorial phenomena emerge whenever $m>1$. The following concept will be useful for our purposes.

  \begin{dfn}
  {
  Given $A\subset \mathbb{N}^m$, we let $\widetilde{A}:=\text{New}(A)\cap\mathbb{N}^m$ denote the set of integer points of the Newton polyhedron of $A$. We refer to $\mathbb{I}(A):=\widetilde{A}\setminus \text{Vert}(A)$ as the {\it irrelevant part} of $A$.}
  \end{dfn}
  \begin{lemma}\label{order_relation}
  Given $a,b\in V\mathbb{B}[T]$ with supports $A$ and $B$ respectively, we have $a\leq b$ if and only if $A\subseteq \widetilde{B}$.
  \end{lemma}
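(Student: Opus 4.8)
The plan is to unwind the definition of the order $\leq$ on $V\mathbb{B}[T]$ in terms of vertex polynomials and translate it into a containment statement about supports in $\mathbb{N}^m$, using the description of the quotient map $V$ as a "take vertices of the Newton polyhedron" operation. Recall that $a \leq b$ means $a \oplus b = b$, and by the definition of $\oplus$ this is $V(i(a) + i(b)) = b$. Since $i(a) + i(b)$ is the boolean power series with support $A \cup B$, and $V$ sends a series to the vertex polynomial of its Newton polyhedron, we have $a \oplus b = V(e_{\mathbb{B}}(A \cup B)) = e_{\mathbb{B}}(\text{Vert}(A \cup B))$. So the condition $a \oplus b = b$ becomes $\text{Vert}(A \cup B) = \text{Vert}(B)$, which by \cite[Lem. 3.1]{FGH20} (invoked already in the proof of Lemma~\ref{New_congruence}) is equivalent to $\text{New}(A \cup B) = \text{New}(B)$.

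Next I would reduce $\text{New}(A \cup B) = \text{New}(B)$ to the containment $A \subseteq \widetilde{B}$. One direction: since $\text{New}(A \cup B) = \text{Conv}(\text{New}(A) \cup \text{New}(B)) \supseteq \text{New}(A) \supseteq A$ always holds, the equality $\text{New}(A \cup B) = \text{New}(B)$ forces $A \subseteq \text{New}(B)$; as $A \subseteq \mathbb{N}^m$, this gives $A \subseteq \text{New}(B) \cap \mathbb{N}^m = \widetilde{B}$. Conversely, suppose $A \subseteq \widetilde{B} \subseteq \text{New}(B)$. Then $\text{New}(A) \subseteq \text{New}(B)$ because $\text{New}(B)$ is closed under adding elements of $\mathbb{N}^m$ and taking convex hulls (it is of the form $\text{Conv}(B + \mathbb{N}^m)$, and $A \subseteq \text{New}(B)$ implies $A + \mathbb{N}^m \subseteq \text{New}(B) + \mathbb{N}^m = \text{New}(B)$, whence its convex hull lies in $\text{New}(B)$ too). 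Therefore $\text{New}(A \cup B) = \text{Conv}(\text{New}(A) \cup \text{New}(B)) = \text{New}(B)$, closing the loop.

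The only mild subtlety — and the step I expect to require the most care — is verifying that $\text{New}(\,\cdot\,)$ is monotone and absorbs $\mathbb{N}^m$-translates in exactly the way used above, i.e. that $A \subseteq \text{New}(B)$ implies $\text{New}(A) \subseteq \text{New}(B)$; this is where the fact that Newton polyhedra are "upward closed" in the $\mathbb{R}_{\geq 0}^m$ directions (by Definition~\ref{Def_New_poly}) does the real work, rather than any genuinely combinatorial input. Everything else is a formal chain of rewrites through the definitions of $\oplus$, $V$, and $i$, together with the already-cited equivalence $\text{New}(a) = \text{New}(b) \iff \text{Vert}(A) = \text{Vert}(B)$. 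I would present the argument as the short two-line equivalence chain $a \leq b \iff \text{New}(A \cup B) = \text{New}(B) \iff A \subseteq \widetilde{B}$, spelling out the polyhedral monotonicity claim as the one nontrivial implication.
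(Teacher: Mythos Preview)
Your proposal is correct and follows essentially the same route as the paper: both directions reduce $a\leq b$ to $\text{New}(A\cup B)=\text{New}(B)$ and then use the upward-closedness of Newton polyhedra in $\mathbb{R}_{\geq0}^m$ to pass between this equality and the containment $A\subseteq\widetilde{B}$. The paper's converse works directly with $A\cup B+\mathbb{N}^m\subseteq\widetilde{B}$ rather than your intermediate step $\text{New}(A)\subseteq\text{New}(B)$, but the underlying idea is identical.
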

  \begin{proof}
  If $a\oplus b=b$, then $A\subset \text{Conv}(A\cup B+\mathbb{N}^m)=\text{New}(B)$, so $A=A\cap \mathbb{N}^m\subset \widetilde{B}$.
  
  Conversely, suppose that $A\subseteq \widetilde{B}$. As $B\subset \widetilde{B}$, we have $B\subset A\cup B\subset \widetilde{B}$, so $B+\mathbb{N}^m\subset A\cup B+\mathbb{N}^m\subset \widetilde{B}+\mathbb{N}^m=\widetilde{B}$. Since $\text{New}(B)$ is an integral polyhedron, it follows that $\text{Conv}(A\cup B+\mathbb{N}^m)=\text{New}(B)$, and consequently $\text{New}(B)=\text{Conv}(\widetilde{B})$. Passing to vertex sets now yields $a\oplus b=b$.
  \end{proof} 
  
  \begin{rem}
  {In the terminology of  
  \cite{Bl}, Lemma~\ref{order_relation} establishes that $V:\mathbb{B}[\![T]\!] \to V\mathbb{B}[T]$ is a {\it residuated mapping}, with residual
  $\wt{\cdot}:V\mathbb{B}[T]\xrightarrow{}\mb{B}[\![T]\!]$.}
  \end{rem}
  
 Note that $\leq$ defines a partial order on $V\mathbb{B}[T]$ with least (resp., largest) element $0$ (resp., 1). Here $V\mb{B}[T]$ contains the set of monomials $\{T^I: I\in\mb{N}^m\}$, and the order induced by $\leq$ on monomials is the {\it opposite} of the usual product order on $\mb{N}^m$. We will use Lemma~\ref{order_relation} to refine this partial order.
  \begin{dfn}\label{dfn_relevant}
 {
 Given $0<a\leq b$ in $V\mathbb{B}[T]$ with supports $A$ and $B$, $a$ is \emph{ irrelevant} for $b$ whenever $A\cap B=\emptyset$; in this case, we write $a\prec b$. }
 \end{dfn}
 
  Note that $a\prec b$ implies $a<b$, but the converse is not true unless $m=1$ or $m>1$ and $b=T^I$ is a monomial. The fact that either $a\nprec b$ or $a\prec b$ whenever $a \leq b$ generalizes the usual dichotomy between $a=b$ and $a<b$ that exists in totally ordered idempotent semirings whenever $a \leq b$.
  In the following remark, we show that some properties of the  order relation $<$ on a totally ordered idempotent semiring ${\rm S}$ also hold for the relation $\prec$; see also Remark~\ref{rem_one_more}.
  
  \begin{rem}\label{properties_of_irrelevant}
{
Assume that $a,b, \text{and }c$ in $V\mathbb{B}[T]\setminus\{0\}$ have supports $A,B, \text{and }C$, respectively. The relation $\prec$ satisfies the following properties:
\begin{enumerate}
    \item {\it Transitivity:} $a\prec b, b\prec c \implies a\prec c$. This follows from the transitivity of $\leq$, and from $A\subset \mathbb{I}(B)\subset \widetilde{B}\subset \mathbb{I}(C)$.
    \item $c\prec a,b \implies c\prec a\oplus b$. Clearly $c\leq a\oplus b$,
    and Proposition \ref{char_of_prod} implies that the support of $a\oplus b$ is contained in $A\cup B$, so  $C\cap(A\cup B)=\emptyset$. 
     \item $a,b\prec c \implies a\oplus b\prec c$. Clearly $a\oplus b\leq c$, and the rest of the proof follows that of item (2).
    \item $a\prec b, c\neq 0 \implies a\odot c\prec b\odot c$. Clearly $a\odot c\leq b\odot c$, and the conclusion now follows from Proposition \ref{char_of_prod}. Indeed, otherwise would have $I_a+I_c=I=I_b+I_d$ for uniquely-prescribed $I_a\in A$, $I_b\in B$ and $I_c,I_d\in C$; but this is precluded by the fact that $A\cap B=\emptyset$.
    
    \item Cancellativity: $a\odot c\prec b\odot c$, with $c\neq 0 \implies a\prec b$. Here $a\odot c\leq b\odot c$ implies $a\leq b$, since $V\mathbb{B}[T]$ is cancellative by Proposition~\ref{Prop_MC}. If $I\in A\cap B$, then no sum of the form $I+J$ with $J\in C$ can appear in $b\odot c$ by hypothesis, but this contradicts the proof of Proposition \ref{Prop_MC}, which establishes that $I+J$ appears in $b\odot c$ for some $J\in C$. Thus $A\cap B=\emptyset$.
\end{enumerate}
}
\end{rem}
\subsection{Intermezzo: alternative characterizations of vertex polynomials}\label{alternative_characterizations}
In this subsection we focus on the polyhedral aspects of vertex polynomials. The Minkowski-Weyl theorem for polyhedra implies that for any $a\in \mathbb{B}[\![T]\!]$ there exists a polytope $\Delta$ for which
\begin{equation}\label{newton_decomposition}
    \text{New}(a)=\Delta+\mathbb{R}_{\geq0}^m.
\end{equation}
While 
$\Delta$ is not unique, there is a distinguished representative $\Delta=\Delta_a$ whose vertex set $\mathcal{V}(\Delta_a)$ is precisely $V(a)$; see \cite[Thm 4.1.3]{RW}.
We call $\Delta_a$ the {\it Newton polytope} of $a$. We anticipate that the Newton decomposition \eqref{newton_decomposition} may be leveraged to streamline the proofs of the salient properties of vertex sets described in \cite{FGH20}. Crucial among these is that vertex sets are {\it hereditary} in the following sense. 
\begin{prop}
\label{Prop_HProperty}
Given $a\in V\mathbb{B}[T]$ with support $A$, we have $e_{\mathbb{B}}(B)\in V\mathbb{B}[T]$ for every subset $B \sub A$.
\end{prop}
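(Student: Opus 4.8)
The statement asserts that the vertex-polynomial property is \emph{hereditary}: if $A = \mathrm{Vert}(A)$ (equivalently $e_{\mathbb{B}}(A) \in V\mathbb{B}[T]$), then every subset $B \subseteq A$ also satisfies $B = \mathrm{Vert}(B)$. My plan is to argue purely in terms of the convex geometry of Newton polyhedra. The key observation is that $A = \mathrm{Vert}(A) = \mathcal{V}(\mathrm{New}(A))$ means $A$ is precisely the vertex set of its own Newton polyhedron, so that each element of $A$ is a vertex of $\mathrm{New}(A)$. I want to show that each $I \in B$ remains a vertex of the smaller polyhedron $\mathrm{New}(B)$, and that no new vertices are created.

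First I would recall that a point $I$ is a vertex of a polyhedron $Q$ iff there is a linear functional $\ell$ on $\mathbb{R}^m$ that attains its minimum over $Q$ uniquely at $I$. Since $I \in A$ is a vertex of $\mathrm{New}(A)$, fix such a functional $\ell_I$; because $\mathrm{New}(A) = \mathrm{Conv}(A + \mathbb{R}_{\geq 0}^m)$ contains the recession cone $\mathbb{R}_{\geq 0}^m$, the functional $\ell_I$ is nonnegative on all coordinates, and $\ell_I$ attains its minimum over $A$ (hence over $\mathrm{New}(A)$) uniquely at $I$. Now for $B \subseteq A$ with $I \in B$: the same $\ell_I$ attains its minimum over the smaller set $B$ uniquely at $I$ as well (uniqueness over the larger set $A$ forces uniqueness over any subset containing $I$), and since $\mathrm{New}(B) = \mathrm{Conv}(B + \mathbb{R}_{\geq 0}^m)$ also contains $\mathbb{R}_{\geq 0}^m$ in its recession cone, $\ell_I$ attains its minimum over $\mathrm{New}(B)$ uniquely at $I$. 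Hence $I \in \mathcal{V}(\mathrm{New}(B)) = \mathrm{Vert}(B)$. This shows $B \subseteq \mathrm{Vert}(B)$.

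The reverse inclusion $\mathrm{Vert}(B) \subseteq B$ is the potential obstacle, since a priori $\mathrm{New}(B)$ could acquire a vertex not lying in $B$. But $\mathrm{Vert}(B) = \mathcal{V}(\mathrm{New}(B))$ and $\mathrm{New}(B)$ is an integral polyhedron of the form $\mathrm{Conv}(B + \mathbb{R}_{\geq 0}^m)$; every vertex of such a polyhedron must be one of the generating lattice points $B$ (a vertex cannot lie in the relative interior of the convex hull of the other generators together with the recession directions, and adding $\mathbb{R}_{\geq 0}^m$ only translates points ``upward," so a vertex is necessarily a minimal generator, i.e.\ an element of $B$). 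More carefully: if $J \in \mathrm{Vert}(B)$, then $J \in \mathrm{New}(B) = \mathrm{Conv}(B) + \mathbb{R}_{\geq 0}^m$, so $J = \sum_{K \in B} \lambda_K K + v$ with $\lambda_K \geq 0$, $\sum \lambda_K = 1$, $v \in \mathbb{R}_{\geq 0}^m$; being a vertex forces this expression to be trivial, i.e.\ $v = 0$ and some $\lambda_{K_0} = 1$, whence $J = K_0 \in B$. Combining the two inclusions gives $\mathrm{Vert}(B) = B$, so by Theorem~\ref{vertex_poly} (or directly by definition of $V\mathbb{B}[T]$ via the congruence $\mathrm{New}$) we conclude $e_{\mathbb{B}}(B) \in V\mathbb{B}[T]$.

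One alternative I would keep in mind, in case the direct functional argument feels too terse: invoke the Newton polytope $\Delta_B$ from \eqref{newton_decomposition}, noting $\Delta_B = \mathrm{Conv}(B)$ when $B$ is finite (which it is, being a subset of a finite vertex set $A$), and apply the elementary fact that for a finite point set $B$, $\mathcal{V}(\mathrm{Conv}(B)) \subseteq B$, together with the observation that passing from $\mathrm{Conv}(B)$ to $\mathrm{New}(B) = \mathrm{Conv}(B) + \mathbb{R}_{\geq 0}^m$ can only delete vertices, never add them, among the points of $B$ already shown to be vertices. I expect the first inclusion (that points of $B$ survive as vertices) to be essentially immediate from the supporting-functional criterion, while the second inclusion is the one deserving a careful sentence about recession cones of $\mathrm{New}(B)$.
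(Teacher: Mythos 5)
Your proof is correct and follows essentially the same route as the paper: both establish $B \subseteq \mathcal{V}(\text{New}(B))$ by restricting a supporting hyperplane of a vertex of $\text{New}(A)$ to the sub-polyhedron $\text{New}(B)$, and $\mathcal{V}(\text{New}(B)) \subseteq B$ by using that vertices of $\text{Conv}(B) + \mathbb{R}_{\geq 0}^m$ must lie in the finite generating set $B$ (the paper routes the first inclusion through a citation to a lemma in the FGH20+ reference, which you unpack explicitly). One small imprecision to tighten: for the forward inclusion, nonnegativity of $\ell_I$ on the recession cone does not by itself yield unique minimization over $\text{New}(B)$ once you pass from the finite set $B$ to the full polyhedron $\text{New}(B)$ --- you would need strict positivity in every coordinate, which does hold here because $\ell_I$ is uniquely minimized at $I$ over $\text{New}(A) \supseteq I + \mathbb{R}_{\geq 0}^m$; the cleanest patch, though, is to skip the detour and simply observe that $\text{New}(B) \subseteq \text{New}(A)$ with $I \in \text{New}(B)$, so unique minimization of $\ell_I$ over $\text{New}(A)$ trivially restricts to unique minimization over $\text{New}(B)$, which is exactly the content of the cited lemma.
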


\begin{proof}
Without loss of generality we assume that $\emptyset \subsetneq B \subsetneq A$, as otherwise the conclusion holds vacuously. We will show that $B=\mathcal{V}(\text{New}(B))$. To this end, first note that $B$ is the vertex set of $\Delta_B=\text{Conv}(B)$. Indeed, no $b \in B$ is realizable as a nontrivial convex combination $b=\sum_i\lambda_i b_i$ of distinct vertices $b_i \in B$, as no such nontrivial convex realization exists for vertices of $A$. Now $\text{New}(B)=\Delta_B+\mathbb{R}_{\geq0}^m$; so $\mathcal{V}(\text{New}(B))\subset B$.

On the other hand, the fact that $\text{New}(B)\subset \text{New}(A)$ implies that $\text{New}(B)\cap A\subset \mathcal{V}(\text{New}(B))$ by \cite[Lem. 3.1]{FGH20+}. Given $v\in \text{New}(B)\cap A$, write $v=w+x$ with $w\in\Delta_B\subset \Delta_A$ and $x\in \mathbb{R}_{\geq0}^m$. Since $v$ is a vertex of $\text{New}(a)$, there is a hyperplane $H$ such that $H\cap \text{New}(a)=\{v\}$. But since $w+\mathbb{R}_{\geq0}^m\subset \text{New}(a)$, we have $H\cap(w+\mathbb{R}_{\geq0}^m)\subset \{v\}$, which implies that $x=0$. As a result, we have $v=w$ and this forces $v\in B$.
\end{proof}

\begin{coro}
 Let $a,b\in V\mathbb{B}[T]$ with supports $A$ and $B$. If $0<a< b$, then there exist $a',b'\in V\mathbb{B}[T]$ with $i(a)=a'+b'$ and supports $A^{\pr}$ and $B^{\pr}$ satisfying $A^{\pr}\subsetneq B$ and $B^{\pr}\subsetneq \mathbb{I}(B)$.
\end{coro}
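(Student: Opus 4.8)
The plan is to realize the decomposition of $i(a)$ geometrically, by splitting the support $A$ of $a$ along the partition $\widetilde{B}=B\sqcup\mathbb{I}(B)$ of the lattice points of $\text{New}(B)$ into vertices and irrelevant part. By Lemma~\ref{order_relation}, the hypothesis $a\le b$ already forces $A\subseteq\widetilde{B}$; so I would set $A':=A\cap B$ and $B':=A\cap\mathbb{I}(B)$, which are disjoint with $A'\cup B'=A\cap\widetilde{B}=A$, whence $i(a)=e_{\mathbb{B}}(A')+e_{\mathbb{B}}(B')$ holds in $\mathbb{B}[\![T]\!]$. I then take $a':=e_{\mathbb{B}}(A')$ and $b':=e_{\mathbb{B}}(B')$; both lie in $V\mathbb{B}[T]$ by the hereditary property (Proposition~\ref{Prop_HProperty}), since $A',B'\subseteq A$ and $a\in V\mathbb{B}[T]$.

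It then remains to check the two strict inclusions. For $A'\subsetneq B$: the inclusion $A'=A\cap B\subseteq B$ is automatic, so I only need $B\not\subseteq A$. Were $B\subseteq A$, then $\text{New}(B)\subseteq\text{New}(A)$ by monotonicity; but $A\subseteq\widetilde{B}\subseteq\text{New}(B)$, and $\text{New}(B)$ is convex and stable under adding $\mathbb{R}_{\ge0}^m$, so $\text{New}(A)\subseteq\text{New}(B)$ as well. Thus $\text{New}(A)=\text{New}(B)$, and passing to vertex sets gives $A=\text{Vert}(A)=\text{Vert}(B)=B$, contradicting $a\ne b$. For $B'\subsetneq\mathbb{I}(B)$: again $B'=A\cap\mathbb{I}(B)\subseteq\mathbb{I}(B)$ is automatic, and the inclusion is strict by a cardinality count. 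Indeed $A$ is finite, being a vertex set (Theorem~\ref{vertex_poly}), whereas $\mathbb{I}(B)=\widetilde{B}\setminus B$ is infinite: fixing $I_0\in B$ (which exists since $b>0$), the polyhedron $\text{New}(B)$ contains $I_0+\mathbb{N}^m$, so $\widetilde{B}$ is infinite while $B$ is finite.

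I expect the only real point of the argument, and it is a small one, to be the observation that $\mathbb{I}(B)$ is automatically infinite, hence $B'$ is automatically a proper subset, so that no geometric work is needed for the second inclusion; everything else is bookkeeping. It is worth recording which hypotheses are actually used: $a\le b$ enters only through Lemma~\ref{order_relation} to place $A$ inside $\widetilde{B}$; $a\ne b$ is needed for $A'\subsetneq B$; and $b\ne0$ (part of $0<a<b$) guarantees that $B$, and hence $I_0$, exists.
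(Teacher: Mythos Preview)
Your proof is correct and follows essentially the same approach as the paper: both use Lemma~\ref{order_relation} to place $A$ inside $\widetilde{B}=B\sqcup\mathbb{I}(B)$, split $A$ accordingly as $(A\cap B)\sqcup(A\cap\mathbb{I}(B))$, and then invoke the hereditary property (Proposition~\ref{Prop_HProperty}) to see that both pieces are vertex sets. You go further than the paper's terse proof by explicitly verifying the two strict inclusions; your arguments for these (using $a\ne b$ for $A'\subsetneq B$, and the finite-versus-infinite observation for $B'\subsetneq\mathbb{I}(B)$) are valid and not supplied in the paper.
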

\begin{proof}
Applying Lemma~\ref{order_relation} and setting 
$\wt{B}:=\text{Vert}(B)\sqcup \mb{I}(B)$, we have $A=A\cap \wt{B}=[A\cap \text{Vert}(B)]\sqcup [A\cap \mb{I}(B)]$, which is itself a union of vertex sets.
\end{proof}

As further applications of the same circle of ideas, we will show that for $A\in \mathbb{N}^m$, the set $(\widetilde{A}\sqcup\{0\},+)$ is a finitely generated $\mathbb{N}$-module; and we will give a useful refined description of the supports of the algebraic operations in $V\mathbb{B}[T]$ in terms of vertices of lattice polytopes. In particular, \cite[Lem. 6]{FGH20+}, which establishes that for every $a,b\in V\mathbb{B}[T]$, every point $I$ in the support of $a\odot b$ is associated with uniquely-prescribed elements $I_a\in A$ and $I_b\in B$, is a consequence of this refined description.

\begin{prop}\label{finite_generation_A-tilde}
Let  $A\subset \mathbb{N}^m$ with $\Delta_A:=\text{Conv}(\text{Vert}(A))$. Then 
\begin{equation}\label{A-tilde_identity}
\widetilde{A}=(\Delta_A+[0,1)^m)\cap\mathbb{N}^m+\mathbb{N}^m.
\end{equation}

\end{prop}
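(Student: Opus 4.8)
The plan is to prove the set equality \eqref{A-tilde_identity} by double inclusion, exploiting the Minkowski–Weyl decomposition $\text{New}(A)=\Delta_A+\mathbb{R}_{\geq0}^m$ established in \eqref{newton_decomposition} together with the integrality of $\text{New}(A)$ (recall every lattice point of $\text{New}(A)$ lies in $\text{New}(A)$ by definition, and $\widetilde{A}=\text{New}(A)\cap\mathbb{N}^m$). First I would record the two elementary facts that will be used repeatedly: (i) $\mathbb{R}_{\geq0}^m=[0,1)^m+\mathbb{N}^m$ as a set, and hence $\text{New}(A)=\Delta_A+[0,1)^m+\mathbb{N}^m$; and (ii) $\widetilde{A}+\mathbb{N}^m\subseteq\widetilde{A}$, since $\text{New}(A)$ is closed under translation by $\mathbb{R}_{\geq0}^m$ and $\widetilde A$ is its lattice-point set. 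Fact (ii) already gives $\widetilde{A}=\widetilde{A}+\mathbb{N}^m$, which explains the $+\mathbb{N}^m$ on the right-hand side.

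For the inclusion $\supseteq$: if $I=J+K$ with $J\in(\Delta_A+[0,1)^m)\cap\mathbb{N}^m$ and $K\in\mathbb{N}^m$, then $J\in\Delta_A+[0,1)^m\subseteq\Delta_A+\mathbb{R}_{\geq0}^m=\text{New}(A)$, and $J\in\mathbb{N}^m$, so $J\in\widetilde{A}$; then $I=J+K\in\widetilde{A}+\mathbb{N}^m=\widetilde{A}$ by fact (ii). For the inclusion $\subseteq$: take $I\in\widetilde{A}$, so $I\in\mathbb{N}^m$ and $I=w+v$ with $w\in\Delta_A$, $v\in\mathbb{R}_{\geq0}^m$ by \eqref{newton_decomposition}. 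Using fact (i), write $v=f+K$ with $f\in[0,1)^m$ and $K\in\mathbb{N}^m$, so $I=(w+f)+K$. The point is to show $w+f\in\mathbb{N}^m$ after possibly re-choosing the decomposition: since $I$ and $K$ are both in $\mathbb{Z}^m$, the vector $w+f=I-K$ is automatically in $\mathbb{Z}^m$, and it lies in $\Delta_A+[0,1)^m$ by construction, so $w+f\in(\Delta_A+[0,1)^m)\cap\mathbb{N}^m$ (nonnegativity is clear as $w,f$ have nonnegative coordinates), giving $I\in(\Delta_A+[0,1)^m)\cap\mathbb{N}^m+\mathbb{N}^m$.

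The one subtlety — and the main thing to get right — is the coordinate-wise decomposition $\mathbb{R}_{\geq0}=[0,1)+\mathbb{N}$ and its compatibility with the lattice: writing $v=f+K$ componentwise via $K_j=\lfloor v_j\rfloor$ makes everything canonical, and then $w+f=I-K\in\mathbb{Z}^m$ falls out for free precisely because we chose $K\in\mathbb{N}^m$ and $I\in\mathbb{N}^m$. There is no genuine obstacle here beyond bookkeeping; the content is entirely carried by \eqref{newton_decomposition} and the integrality of the Newton polyhedron, and the finite-generation statement for $(\widetilde{A}\sqcup\{0\},+)$ that this feeds into follows immediately, since $(\Delta_A+[0,1)^m)\cap\mathbb{N}^m$ is a finite set (being the lattice points of a bounded region) that generates $\widetilde{A}$ over $\mathbb{N}$ together with the standard basis $e_1,\dots,e_m$.
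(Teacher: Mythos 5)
Your proof is correct and follows essentially the same route as the paper's: write each $v \in \widetilde{A}$ as $p + x$ via $\text{New}(A) = \Delta_A + \mathbb{R}_{\geq 0}^m$, peel off the integer part of $x$, and observe that $v$ minus that integer part is a lattice point of $\Delta_A + [0,1)^m$ because $v$ is integral. Your explicit "fact (ii)" ($\widetilde{A} + \mathbb{N}^m \subseteq \widetilde{A}$) makes the $\supseteq$ direction a touch cleaner than the paper's phrasing, but it is the same argument.
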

\begin{proof}
Given $v,w\in\widetilde{A}$, we have $v=p+x$ and $w=q+y$ for some $p,q\in \Delta_A$ and $x,y\in \mathbb{R}_{\geq0}^m$. So $v+w=(p+q)/2+[(p+q)/2+x+y]\in\widetilde{A}$.

We now prove \eqref{A-tilde_identity}. The inclusion $\supseteq$ is clear, since any element $I$ on the right-hand side of \eqref{A-tilde_identity} may be written as $I=(I_1+I_2)+I_3$, where $(I_1+I_2)\in \Delta_A+[0,1]^m$ and $I_3\in \mathbb{N}^m$; so in particular $I=I_1+(I_2+I_3)$ with $I_1\in \Delta_A$ and $(I_2+I_3)\in \mathbb{R}_{\geq0}^m$. 

Conversely, given $v\in \widetilde{A}$, we have $v=p+x$ as before, and moreover $x$ decomposes as $x=m+y$ with $m\in\mathbb{N}^m$ and $y\in[0,1)^m$. So $v=p+m+y$, and $v-m=p+y\in (\Delta_A+[0,1]^m)\cap\mathbb{N}^m$ as $v$ and $m$ are integral.
\end{proof}

\subsection{Comparison with the semiring of lattice polytopes}
\label{Sub_Section_Relationship}
Let $\mathcal{P}_{\mathbb{Z}^m}=(\mathcal{P}_{\mathbb{Z}^m},\oplus,\otimes)$ denote the semiring of lattice polytopes in $\mathbb{Z}^m$, in which $\Delta_1 \oplus \Delta_2:=\text{Conv}(\Delta_1 \cup \Delta_2)$, and $\Delta_1 \otimes \Delta_2$ denotes the Minkowski sum of $\Delta_1$ and $\Delta_2$. This is an idempotent semiring in which $\Delta_1 \leq \Delta_2 $ if and only if $\Delta_1 \subseteq \Delta_2$.

\begin{prop}
\label{char_of_prod}
The map $\text{Conv}:V\mathbb{B}[T]\rightarrow\mathcal{P}_{\mathbb{Z}^m}$ that sends $a$ to $\text{Conv}(a)=\Delta_a$ is a non-archimedean norm in the sense of Definition~\ref{dfn:quasivaluation}.
\end{prop}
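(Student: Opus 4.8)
The plan is to verify directly that $\text{Conv}:V\mathbb{B}[T]\rightarrow\mathcal{P}_{\mathbb{Z}^m}$ satisfies the axioms of a non-archimedean norm as laid out in Definition~\ref{dfn:quasivaluation} (which appears later in the paper but which I may invoke): namely that it sends $0$ to the ``zero'' polytope, $1$ to the origin, is sub-additive with respect to $\oplus$, respects the order, and — for the multiplicative/norm part — converts $\odot$ into Minkowski sum $\otimes$. The first observations are immediate: $\text{Conv}$ of the empty support is empty (matching the additive identity of $\mathcal{P}_{\mathbb{Z}^m}$), and $\text{Conv}(1)=\text{Conv}(\{0\})=\{0\}$, the Minkowski-sum identity. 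For each of the remaining properties I would exploit the description of vertex polynomials via Newton polytopes: by the discussion around \eqref{newton_decomposition}, a vertex polynomial $a$ with support $A$ corresponds to the lattice polytope $\Delta_a=\text{Conv}(\text{Vert}(A))=\text{Conv}(A)$, and $\mathcal{V}(\Delta_a)=A$ exactly because $A$ is already a vertex set.

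The key computational steps are the two structural identities. \emph{Additivity:} for $a,b\in V\mathbb{B}[T]$ with supports $A,B$, the sum $a\oplus b$ has support $\text{Vert}(A\cup B)$, whose Newton polyhedron is $\text{Conv}(A\cup B)+\mathbb{R}_{\geq0}^m=\big(\text{Conv}(A)\oplus\text{Conv}(B)\big)+\mathbb{R}_{\geq0}^m$; passing to Newton polytopes (the bounded distinguished representative of \cite[Thm 4.1.3]{RW}) gives $\Delta_{a\oplus b}=\text{Conv}(\Delta_a\cup\Delta_b)=\Delta_a\oplus\Delta_b$, which is even an equality, hence in particular $\Delta_{a\oplus b}\leq\Delta_a\oplus\Delta_b$ and the order-preservation $a\leq b\Rightarrow\Delta_a\subseteq\Delta_b$ follows from Lemma~\ref{order_relation} together with $\text{Conv}(A)\subseteq\text{Conv}(\widetilde B)=\text{New}(B)\cap(\text{bounded part})=\Delta_b$. \emph{Multiplicativity:} the support of $a\odot b$ is $\text{Vert}(A+B)$, and since $\text{New}(A+B)=\text{New}(A)+\text{New}(B)$ (used already in Lemma~\ref{New_congruence}), taking distinguished bounded representatives yields $\Delta_{a\odot b}=\Delta_a+\Delta_b=\Delta_a\otimes\Delta_b$. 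Here I would cite Theorem~\ref{vertex_poly} to know $\text{Vert}(A+B)$ is the vertex set of this Minkowski sum, and possibly the forthcoming Proposition referenced as \cite[Lem. 6]{FGH20+} for the uniqueness of the decomposition of points in the product, though strictly only the polytope-level identity is needed for the norm axiom.

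The main obstacle I anticipate is \emph{bookkeeping about bounded versus unbounded representatives}: $\text{New}(a)$ is an unbounded polyhedron $\Delta_a+\mathbb{R}_{\geq0}^m$, whereas $\mathcal{P}_{\mathbb{Z}^m}$ consists of honest (bounded) lattice polytopes, so every identity must be established at the level of the distinguished bounded summand $\Delta_a$ and one must check that ``take the distinguished $\Delta$'' commutes with $\oplus$ and with Minkowski sum. The identity $\text{New}(a)+\text{New}(b)=\text{New}(ab)$ is clean for the unbounded polyhedra, but I must argue that the bounded vertex-hull of the sum equals the Minkowski sum of the bounded vertex-hulls — this uses that a Minkowski sum of polytopes is a polytope and that adding $\mathbb{R}_{\geq0}^m$ afterwards recovers the Newton polyhedron, so the distinguished representatives match. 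A secondary, minor subtlety is that the polytopes produced are automatically \emph{lattice} polytopes (vertices lie in $\mathbb{Z}^m$), which is exactly Theorem~\ref{vertex_poly} applied to $A$, $A\cup B$, and $A+B$ respectively, so $\text{Conv}$ genuinely lands in $\mathcal{P}_{\mathbb{Z}^m}$ and the proof concludes.
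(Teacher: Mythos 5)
Your proposal goes off the rails at exactly the point the paper flags with a warning sign. You write that ``passing to Newton polytopes gives $\Delta_{a\oplus b}=\text{Conv}(\Delta_a\cup\Delta_b)=\Delta_a\oplus\Delta_b$, which is even an equality,'' and similarly claim $\Delta_{a\odot b}=\Delta_a\otimes\Delta_b$. Both equalities are false in general, and the paper's own Example~\ref{polytope} is constructed precisely to show this: there $\mathcal{V}(\Delta_a\otimes\Delta_b)=A\odot B\cup\{(6,5)\}$ and $\mathcal{V}(\Delta_a\oplus\Delta_b)=A\oplus B\cup\{(4,2)\}$, so the inclusions in~\eqref{uniqueness} are proper. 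The gap in your reasoning is the step ``adding $\mathbb{R}_{\geq0}^m$ afterwards recovers the Newton polyhedron, so the distinguished representatives match.'' That implication fails: the Newton decomposition~\eqref{newton_decomposition} is not injective at the level of bounded summands. The polytope $\Delta_a\oplus\Delta_b$ (resp.\ $\Delta_a\otimes\Delta_b$) is \emph{a} bounded representative, but not the \emph{distinguished} one, because it may have ``upper'' vertices that vanish once the recession cone $\mathbb{R}_{\geq0}^m$ is added. In particular, were your equality correct, $\text{Conv}$ would actually be a \emph{valuation} (multiplicative norm) and not merely a norm, contradicting the content of Example~\ref{polytope}.

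What does hold, and what is actually needed for Definition~\ref{dfn:quasivaluation}, is the one-sided inclusion. Every vertex of $\text{New}(ab)=(\Delta_a\otimes\Delta_b)+\mathbb{R}_{\geq0}^m$ is necessarily a vertex of the bounded polytope $\Delta_a\otimes\Delta_b$ (a point $d+r$ with $r\neq0$ in the recession cone cannot be a vertex), so $\mathcal{V}(\text{New}(ab))\subseteq\mathcal{V}(\Delta_a\otimes\Delta_b)$ and hence $\Delta_{a\odot b}\subseteq\Delta_a\otimes\Delta_b$, i.e.\ $\text{Conv}(a\odot b)\leq\text{Conv}(a)\otimes\text{Conv}(b)$; the same argument with $\oplus$ in place of $\otimes$ gives subadditivity. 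This is the route the paper takes. You should replace your equality claims with this containment argument; once you do, the rest of your plan (injectivity from the fact that a polytope is determined by its vertices, the trivial checks on $|0|$ and $|1|$, and landing in lattice polytopes via Theorem~\ref{vertex_poly}) is sound and matches the paper.
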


\begin{proof}
The fact that a polytope is determined by its vertices implies that $\text{Conv}$ is injective. Now suppose that $a,b$ have supports $A$, $B$ and Newton polytopes $\Delta_a,\Delta_b$, respectively. By definition, we have $a\odot b=V(ab) =\mathcal{V}(\text{New}(A+B+\mathbb{N}^m))$. As
  $\text{New}(A+B+\mathbb{N}^m)=(\Delta_a\otimes\Delta_b)+\mathbb{R}_{\geq0}^m$, it follows that $\text{Conv}(a\odot b)\subset \Delta_a\otimes\Delta_b$.
  Similarly, we have $a\oplus b=V(a+b)=\mathcal{V}(\text{New}(A\cup B+\mathbb{N}^m))$; and as 
  $\text{New}(A\cup B+\mathbb{N}^m)=(\Delta_a \oplus \Delta_b)+\mathbb{R}_{\geq0}^m$, it follows that $\text{Conv}(a\oplus b)\subset \Delta_a \oplus \Delta_b$. In particular, we have chains of inclusions 
  \begin{equation}
\label{uniqueness}
\begin{aligned}
V(ab)\subseteq \mathcal{V}(\Delta_a\otimes \Delta_b)\subseteq A+B \text{ and }V(a+b)\subseteq \mathcal{V}(\Delta_a\oplus \Delta_b)\subseteq A\cup B.
\end{aligned}
 \end{equation}
On the other hand, according to \cite[\S 3.1]{DT}, we have 
\[
\mathcal{V}(\Delta_a\otimes\Delta_b)=\{I\in A+B: I \text{ decomposes {\it uniquely} as }I_a+I_b, I_a\in \Delta_a, I_b\in \Delta_b\}.
\]
 \end{proof}
 
 \begin{figure}[!htb]
    \centering
    \includegraphics[scale=0.75]{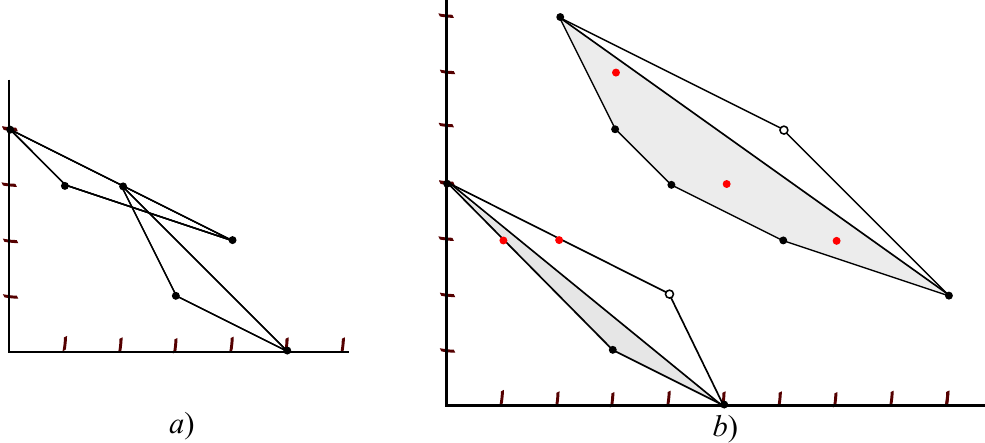}
    \caption{Polytopes from Example~\ref{polytope}: a) $\Delta_a$ and  $\Delta_b$; and b) $\text{Conv}(a\oplus b)\subsetneq \Delta_a\oplus\Delta_b$ and $\text{Conv}(a\odot b)\subsetneq \Delta_a\otimes\Delta_b$, where the included polytopes are shaded.}
\label{Figura_politopos}
\end{figure}
  
\begin{ex}\label{polytope}
{
Let $A=\{(2,3),(3,1),(5,0)\}$ and $B=\{(0,4),(1,3),(4,2)\}$; then $A\odot B=\{(2,7),(3,5),(4,4),(6,3),(9,2)\}$ and $A\oplus B=\{(0,4),(1,3),(5,0)\}$. So the inclusions \eqref{uniqueness} are proper in this case.}

\noindent {Now let 
$\Delta_a=\text{Conv}(A)$ and $\Delta_b=\text{Conv}(B)$.  As indicated in Figure~\ref{Figura_politopos}b, we have
\[
\begin{aligned}
\mathcal{V}(\Delta_a\otimes\Delta_b)&=A\odot B\cup\{(6,5)\}\text{ and }  A+B=\mathcal{V}(\Delta_a\otimes\Delta_b)\cup\{(3,6),(5,4),(7,3)\};  \\ 
\mathcal{V}(\Delta_a\oplus\Delta_b)&=A\oplus B\cup\{(4,2)\}\text{ and } A\cup B=\mathcal{V}(\Delta_a\oplus\Delta_b)\cup\{(1,3),(2,3)\}.
\end{aligned}
\]
}
\end{ex}

Now let $\overline{\mathbb{Z}}$ denote the idempotent semifield given set-theoretically by $\mathbb{Z}\cup\{-\infty\}$ and equipped with the usual max-plus tropical addition and multiplication laws, 
and let  $\mathcal{O}_{\mathbb{Z}^m}$ denote the idempotent semifield given set-theoretically by
$\mathcal{O}_{\mathbb{Z}^m}= \{f:\mathbb{Z}^m\rightarrow\overline{\mathbb{Z}} \::\:f\text{ is piecewise linear} \}\cup\{-\infty\}$ and equipped with the operations of {\it point-wise} tropical addition and multiplication. 

The map $\phi:\mathcal{P}_{\mathbb{Z}^m}\ra \mathcal{O}_{\mathbb{Z}^m}$ sending the polytope $P$ to its support function $\phi_P:\mathbb{Z}^{m}\rightarrow\overline{\mathbb{Z}}$ defined by $\phi_P(x)=\text{max}\{\langle u,x\rangle\::\:u\in P\}$   (where $\langle u,x\rangle$ denotes the usual Euclidean inner product) is an embedding of semirings. See \cite{KM19} for a recapitulation of the well-known correspondence between piecewise linear functions and polytopes. 
 
Given $a\in V\mathbb{B}[T]$, let $\phi_{a}:\mathbb{Z}^m\xrightarrow[]{}\mathbb{Z}$ denote the support function associated to $\Delta_a$.
If   $X\subset \mathcal{O}_{\mathbb{Z}^m}$ denotes the image of $V\mathbb{B}[T]$ under $\phi_a$, it is natural to wonder whether $X$ and $\overline{\mathbb{Z}}$ may be endowed with an algebraic structure so that the resulting function $V\mathbb{B}[T]\rightarrow X$ sending $a$ to $\phi_{a}$ is a semiring homomorphism.

\section{Non-Archimedean seminorms and initial forms}
 \label{Section_QVIF}
 
 Our tropicalization scheme for differential algebraic geometry is an enriched version of the classical tropicalization scheme based on valuations; the key technical notion we require is that of  
 non-Archimedean seminorm, which is closely 
 related to the quasivaluations of \cite{KM19} and the ring valuations of \cite{GG}.
 
 We also use non-Archimedean seminorms in order to single out new types of {\it initial forms} associated with (differential) polynomials.
Initial forms for differential polynomials $P\in K_{m,n}$ over a field of characteristic zero $K$ were first introduced in \cite[Sect. 2]{HG19} when $m=1$; they were subsequently generalized in \cite[Sect. 8]{FGH20+} for every $m$.  Here we first reframe initial forms in terms of seminorms, and in Section~\ref{Section_Fromringtofield} we show how to extend the construction of initial forms to differential polynomials with coefficients in  $\text{Frac}(K[\![T]\!])$. Throughout this section, ${K}$ denotes a field of characteristic zero.

\begin{dfn}\label{dfn:quasivaluation}
 {A }\emph{non-Archimedean seminorm} {is a map $|\cdot|:R\rightarrow {\rm S}$ from a semiring $R$ to an idempotent semiring ${\rm S}$ for which
 \begin{enumerate}
 \item $|0|=0$ and  $|1|=1$;
 \item $|a+b|\leq |a|+|b|$; and
 \item $|ab|\leq |a||b|$.
 \end{enumerate} 
Whenever $R$ is a ${K}$-algebra, we replace the condition $|1|=1$ by the requirement that $|a|=1$ for every nonzero element $a\in K$. The non-Archimedean seminorm $|\cdot|$ is } a norm {if furthermore no nonzero element $a$ satisfies $|a|=0$, and  }multiplicative {if it satisfies $|ab|= |a||b|$ for every $a, b \in R$. A multiplicative norm is called a valuation. Whenever ${\rm S}$ arises from a totally ordered monoid $({\rm M},\times,1,\leq)$, the non-Archimedean seminorm $|\cdot|$ is of {\it Krull} (or classical) type.}
 \end{dfn}

Hereafter we will simply speak of seminorms, with the implicit understanding that they are non-Archimedean.

\begin{dfn}
 \label{defi_support}
{The }\emph{support series} {of the  element $a=\sum_{I\in A}a_IT^I\in {\rm S}[\![T]\!] $ is 
 the boolean formal power series $\text{sp}(a):=\sum_{I\in A}T^I$.}
 \end{dfn}
 
 When ${\rm S}=K$, the resulting map $\text{sp}:{K}[\![T]\!] \rightarrow\mathbb{B}[\![T]\!]$ is a norm, and the map $\text{Supp}:{K}[\![T]\!] \rightarrow\mathcal{P}(\mathbb{N}^m)$ from \cite{FGH20} fits into the following commutative diagram:
 \begin{equation*}
     \xymatrix{K[\![T]\!] \ar[r]^{\text{sp}}\ar[dr]_{\text{Supp}}&\mathbb{B}[\![T]\!]\ar[d]^{\ell} \\
     &\mathcal{P}(\mathbb{N}^m).}
 \end{equation*}
 
 Moreover, as we show in the proof of Proposition \ref{Longue_prop} below, the  norm $\text{sp}$ commutes with the respective differentials of $K_{m}=({K}[\![T]\!] ,D)$ and $\mathbb{B}_{m}=(\mathbb{B}[\![T]\!] ,D)$
. Putting everything together and applying Lemma~\ref{New_congruence}, we obtain a more intrinsic formulation of the following result from \cite[Sect. 4]{FGH20}.

\begin{thm}\label{VBT_theorem}
We have a commutative diagram
 \begin{equation}
 \label{Partial_diff_scheme}
 \xymatrix{
 &\mathbb{B}_m\ar[r]^-{\text{for}}&\mathbb{B}[\![T]\!]\ar[d]^{\pi}\\
 K_m\ar[r]_{\text{for}}\ar[ur]^{\text{sp}}&{K}[\![T]\!]\ar[ur]^{\text{sp}} \ar[r]_{\text{trop}}& \faktor{\mathbb{B}[\![T]\!] }{\text{New}}\\
 }
 \end{equation}
in which \emph{for} denotes the map that forgets the differential structures, while
\begin{enumerate}
\item sp is a ${K}$-algebra norm that commutes with $D$;
\item $\pi:\mathbb{B}[\![T]\!] \rightarrow \faktor{\mathbb{B}[\![T]\!] }{\text{New}}$ is a quotient homomorphism; and
\item $\text{trop}:K[\![T]\!] \rightarrow \faktor{\mathbb{B}[\![T]\!] }{\text{New}}$ is a ${K}$-algebra valuation.
 \end{enumerate}
 \end{thm}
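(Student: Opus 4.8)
\textbf{Proof plan for Theorem~\ref{VBT_theorem}.}

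The plan is to verify the three numbered assertions separately, since the commutativity of diagram~\eqref{Partial_diff_scheme} is then essentially bookkeeping: the left triangle commutes because $\text{sp}$ is defined on $K_m$ by forgetting $D$ and then taking supports, the right-hand triangles commute by the definition of $\text{trop}$ as $\pi\circ\text{sp}$, and item~(2) is just Lemma~\ref{New_congruence}. So the content is in items~(1) and~(3).

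For item~(1), I would check the three seminorm axioms of Definition~\ref{dfn:quasivaluation} for $\text{sp}:K[\![T]\!]\to\mathbb{B}[\![T]\!]$ directly from Definition~\ref{defi_support}. Sending $0\mapsto 0$ is immediate, and for $0\neq a\in K$ we have $\text{Supp}(a)=\{0\}$, so $\text{sp}(a)=1$, giving the $K$-algebra normalization. For the subadditivity $\text{sp}(a+b)\leq\text{sp}(a)+\text{sp}(b)$: the support of a sum of power series is contained in the union of the supports, with equality unless cancellation occurs in $K$; since the order $\leq$ on $\mathbb{B}[\![T]\!]$ is containment of supports (in the idempotent semiring $\mathbb{B}[\![T]\!]$, $x\leq y$ iff $x+y=y$ iff $\text{Supp}(x)\subseteq\text{Supp}(y)$), this is exactly $\text{Supp}(a+b)\subseteq\text{Supp}(a)\cup\text{Supp}(b)$. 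For submultiplicativity $\text{sp}(ab)\leq\text{sp}(a)\text{sp}(b)$: the convolution product has $\text{Supp}(ab)\subseteq\text{Supp}(a)+\text{Supp}(b)$, again with possible strict inclusion because coefficient sums $\sum_{J+K=I}a_Jb_K$ may vanish in the field. That $\text{sp}$ is a norm follows since a power series with nonzero support has a nonzero coefficient, hence nonzero support series. Finally, commutation with $D$: it suffices to treat a single $\tfrac{\partial}{\partial t_j}$ on a monomial $aT^I$ with $a\neq 0$ in $K$; by \eqref{rule_deriv} this is $i_ja T^{I-e_j}$ if $i_j\neq 0$ and $0$ otherwise, and since $K$ has characteristic zero the scalar $i_j$ is nonzero exactly when $i_j\neq 0$, so $\text{sp}\bigl(\tfrac{\partial}{\partial t_j}(aT^I)\bigr)=T^{I-e_j}$ or $0$ accordingly; this agrees with $\tfrac{\partial}{\partial t_j}\text{sp}(aT^I)$ computed via the $\mathbb{B}[\![T]\!]$-derivation. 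Extending additively over all monomials and using linearity of both derivations finishes item~(1). \emph{This is where the characteristic-zero hypothesis is essential and should be flagged.}

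For item~(3), I would show $\text{trop}=\pi\circ\text{sp}$ is multiplicative, i.e.\ $\text{trop}(ab)=\text{trop}(a)\text{trop}(b)$, upgrading the submultiplicativity from~(1). Writing $a=\sum_{I\in A}a_IT^I$ with $A=\text{Supp}(a)$ and similarly for $b$, we have $\text{Supp}(ab)\subseteq A+B$, and applying $\pi$ (equivalently, passing to $\text{Vert}$ via Lemma~\ref{New_congruence}) gives $\text{trop}(ab)\leq\text{trop}(a)\odot\text{trop}(b)=\mathcal{V}(\text{New}(A+B))$. For the reverse inequality the key point is that the coefficient of $T^I$ in $ab$, namely $\sum_{J+K=I}a_Jb_K$, is nonzero whenever $I$ is a \emph{vertex} of $\text{New}(A+B)$: such a vertex decomposes \emph{uniquely} as $I=I_a+I_b$ with $I_a\in\Delta_a\cap\mathbb{Z}^m$, $I_b\in\Delta_b\cap\mathbb{Z}^m$ (this is precisely the uniqueness statement from \cite[\S 3.1]{DT} invoked in the proof of Proposition~\ref{char_of_prod}), forcing $I_a\in A$, $I_b\in B$; then the sum collapses to the single term $a_{I_a}b_{I_b}\neq 0$. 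Hence $\text{Vert}(A+B)\subseteq\text{Supp}(ab)$, so every vertex of $\text{New}(A+B)$ survives in $\text{trop}(ab)$, and since $\text{trop}(a)\odot\text{trop}(b)$ has support exactly $\text{Vert}(A+B)$ (a minimal spanning set by Theorem~\ref{vertex_poly}) while $\text{trop}(ab)\leq\text{trop}(a)\odot\text{trop}(b)$, the two vertex polynomials coincide. Together with $\text{trop}(1)=1$ and $\text{trop}|_{K^\times}=1$ inherited from $\text{sp}$, this shows $\text{trop}$ is a $K$-algebra valuation. \emph{The main obstacle is the reverse multiplicativity inequality}, but it is handled entirely by the unique-Minkowski-decomposition property of vertices of a Minkowski sum, which is exactly the polyhedral input already assembled for Propositions~\ref{char_of_prod} and~\ref{finite_generation_A-tilde}; no cancellation can destroy a vertex coefficient because there is only one monomial contributing to it.
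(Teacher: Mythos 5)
Your proposal is correct and follows essentially the same route as the paper: item~(2) is Lemma~\ref{New_congruence}, item~(1) is the $n=0$ case of Proposition~\ref{Longue_prop} (which the paper delegates to the Appendix, where the $n=0$ seminorm axioms are asserted to be ``clear''---your direct verification, including the explicit flag that characteristic zero is what keeps the scalar $i_j$ from vanishing under $\tfrac{\partial}{\partial t_j}$, fills in exactly that gap), and item~(3) rests on the same polyhedral input, namely the unique Minkowski decomposition of vertices of $\Delta_a\otimes\Delta_b$ from \cite[\S3.1]{DT} as packaged in Proposition~\ref{char_of_prod}. The one organizational difference worth noting is that the paper argues from $I\in\trop(\varphi\psi)$ toward the Minkowski sum, while you argue in the cleaner (and for the nontrivial inequality, more directly relevant) direction: every vertex of $\text{New}(A+B)$ has a single contributing term, hence survives with nonzero coefficient in $\varphi\psi$, so $\text{Vert}(A+B)\subseteq\text{Supp}(\varphi\psi)$ and the Newton polyhedra agree.
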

 
  \begin{proof}
 Item 1 (resp., 2) follows  from Proposition~\ref{Longue_prop} (resp., Lemma~\ref{New_congruence}). Item 3 is the content of \cite[Lem. 4.3]{FGH20}, but the argument given there for the multiplicativity of trop unfortunately is lacking, as it is predicated on reducing to a multiplicative property for (pairs of) Newton polytopes in several variables, which fails in general.
 
 To justify item 3, we argue as follows. Given $I\in \trop(\varphi\psi)$, let $c_It^I$ denote the corresponding monomial. According to Proposition~\ref{char_of_prod}, there are unique monomials $a_JT^{J}$ of $\overline{\varphi}$ and $b_KT^{K}$ of $\overline{\psi}$ for which $c_It^I=a_JT^{J}b_KT^{K}$; so 
$I \in \text{sp}(\overline{\varphi})\text{sp}(\overline{\psi})$; that trop is multiplicative now follows from the fact that $\pi$ is a homomorphism.
\end{proof}

The algebro-geometric study of tropical geometry over the semifield of piecewise linear functions $\mathcal{O}_{\mathbb{Z}^m}$, and in particular of $\mathcal{O}_{\mathbb{Z}^m}$-valued valuations, was initiated in \cite{KM19}. Remark~\ref{char_of_prod} 
implies that our valuations $\text{trop}:K[\![T]\!] \xrightarrow{}V\mathbb{B}[T]$ are closely related to theirs. It also establishes that the order structure of the idempotent semiring $V\mathbb{B}[T]$ is richer than that of $\mathcal{P}_{\mathbb{Z}^m}$. This will be relevant in section \ref{Section_Fromringtofield}.
 
 \begin{rem}
 \label{rem_diff_enh}
The diagram \eqref{Partial_diff_scheme} from Theorem~\ref{VBT_theorem} establishes that sp is an instance of what J. Giansiracusa and S. Mereta call a \emph{differential enhancement} of the trop valuation. See \cite[Section 4.7]{GM}
\end{rem}

It is important to note that although  $\text{trop}:K[\![T]\!]\xrightarrow{}V\mathbb{B}[T]$ is a  valuation, it is never of Krull type when $m>1$, which follows from  Lemma \ref{order_relation}. Remarkably, though, there is still a meaningful theory of initial forms with respect to trop, as we  show in the next subsection. 

\begin{rem}
\label{rem_one_more}
{Using the  relation of irrelevancy from Definition \ref{dfn_relevant} we show that if $\trop(a)\prec \trop(b)$, then $\trop(a+b)=\trop(a)\oplus\trop(b)=\trop(b)$, because $b$ decomposes as $b=\overline{b}+(b-\overline{b})$ (cf. \eqref{New_decomposition}), and in this instance no cancellations may occur because $\trop(a)\cap \trop(b)=\emptyset$.}
\end{rem}

\subsection{Initial forms}
\label{Section_Initial_forms}
The main result of this subsection is the following extension from $n=0$ to $n>0$ of Theorem~\ref{VBT_theorem}; it is the key to reformulating the results of \cite{FGH20+} in terms of valuation theory.

\begin{thm}\label{VBT_theorem_extended}
Given $w=(w_1,\ldots,w_n)\in \mathbb{B}_m ^n$, we have a commutative diagram
 \begin{equation}
 \label{Ext_Partial_diff_scheme}
 \xymatrix{
 &\mathbb{B}_{m,n}\ar[r]^-{\text{for}}&\mathbb{B}[\![T]\!][x_{i,J}]\ar[r]^-{\text{ev}_w}&\mathbb{B}[\![T]\!]\ar[d]^{V}\\
 K_{m,n}\ar[r]_{\text{for}}\ar[ur]^{\text{sp}}&{K}[\![T]\!][x_{i,J}]\ar[ur]^{\text{sp}} \ar[rr]_{\text{trop}_w}&& V\mathbb{B}[T]\\
 }
 \end{equation}
in which \emph{for} denotes the map that forgets the differential structures, while
\begin{enumerate}
\item the map $\text{sp}:{K}_{m,n}\rightarrow \mathbb{B}_{m,n}$  is a  ${K}$-algebra norm  satisfying $\text{sp}\circ\Theta_{{K}_{m,n}}(e_i)\leq \Theta_{\mathbb{B}_{m,n}}(e_i)\circ \text{sp}$ for  $i=1,\dots,m$; and
\item $\text{trop}_w:{K}[\![T]\!][x_{i,J}] \rightarrow V\mathbb{B}[T]$ is a  multiplicative $K$-algebra seminorm.

 \end{enumerate}
 \end{thm}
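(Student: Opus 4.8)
The plan is to verify the diagram commutes and then establish the two enumerated properties, leveraging the $n=0$ case already proved in Theorem~\ref{VBT_theorem} together with the evaluation-commutation fact recorded in Remark~\ref{commutation}. First I would set up notation: write $\text{trop}_w$ as the composite $K[\![T]\!][x_{i,J}] \xrightarrow{\text{sp}} \mathbb{B}[\![T]\!][x_{i,J}] \xrightarrow{\text{ev}_w} \mathbb{B}[\![T]\!] \xrightarrow{V} V\mathbb{B}[T]$, where $\text{ev}_w$ substitutes $x_{i,J} \mapsto \Theta_{\mathbb{B}_m}(J)w_i$; this is precisely how the diagram is drawn, so commutativity of the right-hand square is definitional and commutativity of the left triangle is the statement that $\text{sp}$ intertwines the two forgetful maps, which is immediate since $\text{sp}$ is defined coefficientwise and does not see the differential structure.

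For item (1), I would argue that $\text{sp}:K_{m,n} \to \mathbb{B}_{m,n}$ is a $K$-algebra norm by the same coefficientwise reasoning as in Theorem~\ref{VBT_theorem}(1): it sends $0$ to $0$, is the identity on supports of nonzero constants, is subadditive because $\text{Supp}(f+g) \subseteq \text{Supp}(f)\cup\text{Supp}(g)$, is submultiplicative because $\text{Supp}(fg)\subseteq \text{Supp}(f)+\text{Supp}(g)$, and kills no nonzero element. The new content is the inequality $\text{sp}\circ\Theta_{K_{m,n}}(e_i) \leq \Theta_{\mathbb{B}_{m,n}}(e_i)\circ\text{sp}$. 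This follows from the explicit derivation rule \eqref{rule_deriv} on the $T$-variables and the rule $\partial_{t_i}x_{k,J} = x_{k,J+e_i}$ on the polynomial variables: on a monomial $a_I T^I$, the complex derivative $\partial_{t_i}(a_I T^I) = i_i a_I T^{I-e_i}$ may vanish (when $i_i = 0$, or conceivably when $i_i a_I = 0$ — but $\text{char}\,K = 0$ rules the latter out), whereas $\Theta_{\mathbb{B}_{m,n}}(e_i)$ applied to $T^I$ yields $T^{I-e_i}$ exactly when $i_i \neq 0$; so the support of the $K$-side derivative is contained in the support of the $\mathbb{B}$-side derivative of the support, which is the asserted $\leq$. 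On the $x_{k,J}$ part there is no loss at all, so the inequality is in general strict only because of the $T$-derivatives dropping to zero when an index is already zero. Extending from monomials to general differential polynomials by linearity and the Leibniz rule preserves the inequality since $\leq$ is compatible with $+$ and $\times$ in the idempotent semiring $\mathbb{B}_{m,n}$.

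For item (2), multiplicativity of $\text{trop}_w$ is the crux and the main obstacle. The composite $\text{ev}_w \circ \text{sp}$ need not be multiplicative, but composing further with $V$ repairs this exactly as in Theorem~\ref{VBT_theorem}(3): given $P, Q \in K_{m,n}$, I would show $\text{trop}_w(PQ) = \text{trop}_w(P)\odot\text{trop}_w(Q)$ by using Remark~\ref{commutation} to reduce to the $n=0$ statement. Concretely, for fixed $w$, evaluation $\text{ev}_w$ followed by $\text{sp}$ on $K$-coefficients agrees with first specializing $P$ to $P(w) \in K[\![T]\!]$ (using that $\text{sp}$ commutes with $\Theta$ up to the $\leq$ from item (1) — here I must be careful, and the cleanest route is to reduce directly to Newton polytopes). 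The real point, following the proof of Theorem~\ref{VBT_theorem}(3): for $I$ in the support of $V((PQ)(w))$, Proposition~\ref{char_of_prod} (the uniqueness of decomposition of vertices of a Minkowski sum) guarantees that the monomial $c_I T^I$ arises uniquely as a product $a_J T^J \cdot b_K T^K$ with $a_J T^J$ a monomial of $\text{sp}(P(w))$ and $b_K T^K$ a monomial of $\text{sp}(Q(w))$, hence $I$ lies in the support of $\text{sp}(P(w))\cdot\text{sp}(Q(w))$; the reverse inclusion (every such vertex of the product survives) is Proposition~\ref{char_of_prod} again, and then multiplicativity follows because $V = \pi$ is a semiring homomorphism and $(PQ)(w) = P(w)Q(w)$. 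The subadditivity and submultiplicativity required for $\text{trop}_w$ to be a seminorm, and the normalization $\text{trop}_w(a) = 1$ for $a \in K^\times$, are inherited from $\text{sp}$ and $\text{ev}_w$ being semiring homomorphisms on the relevant pieces together with $V$ being order-preserving. I expect the subtlety to lie entirely in making the reduction-to-$n=0$ step rigorous — specifically, in checking that evaluating at $w$ after applying $\text{sp}$ gives the same support data as the $n=0$ trop applied to $P(w)$, which is where the commutation-with-differentials from item (1) and Remark~\ref{commutation} must be deployed carefully rather than waved at.
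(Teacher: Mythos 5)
Your treatment of the commutative diagram and item (1) is essentially correct and follows the paper's line: subadditivity and submultiplicativity of $\text{sp}$ come from the inclusions of supports under sum and convolution, and the inequality against $\Theta_{\mathbb{B}_{m,n}}(e_i)$ arises only because the coefficient derivative $\partial_{t_i}(a_IT^I)$ can vanish when $i_i=0$ (characteristic zero rules out the other source of vanishing), with no loss on the $x_{k,J}$ side. The paper's Appendix packages this via the ``additive and multiplicative supports'' $\text{as}(\cdot,\cdot)$, $\text{ms}(\cdot,\cdot)$ of \eqref{tad_and_tmd}, which is bookkeeping you can safely regard as a refinement of your sketch.

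The gap is in item (2), and you partly sense it yourself, but you reach for the wrong tool. Your proposed reduction -- apply Proposition~\ref{char_of_prod} to the vertex $I$ of $\text{New}(\text{ev}_w(\text{sp}(P))) + \text{New}(\text{ev}_w(\text{sp}(Q)))$ and conclude as in the $n=0$ case -- does not close. In the $n=0$ case the uniqueness of the decomposition $I=I_P+I_Q$ implies the convolution coefficient $c_I=\sum_{J+K=I}a_Jb_K$ reduces to a single nonzero product, so the vertex monomial survives. For $n>0$ there is a second layer of possible cancellation: the coefficient $c_O=\sum_{M+N=O}a_Mb_N$ of $E_O$ in $PQ$ is itself a sum over pairs of exponent arrays $(M,N)$, and after evaluation at $w$ the contribution of a given vertex $I$ comes from the (in general non-unique) decompositions $I_P=J+L$ with $J\in\text{Supp}(a_M)$, $L\in\text{Supp}(E_M(w))$, and similarly for $I_Q$. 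Vertex-uniqueness pins down the pair $(I_P,I_Q)$, but it does not pin down the pair $(M,N)$ nor the further decompositions of $I_P$ and $I_Q$, so it does not prevent the coefficient of $T^{J+J'}$ in $c_{M+N}$ from vanishing. Remark~\ref{commutation} cannot repair this either: it only yields an inequality of supports under differentiation, not the equality you need.

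The paper's route (Corollary~\ref{coro_Longue_prop}) is a genericity argument, which is the missing idea: for a generic $\varphi\in K_m^n$ with $\text{sp}(\varphi)=w$ one has $\text{sp}(P(\varphi)) = \text{ev}_w(\text{sp}(P))$ (here uncountability of $K$ is used to avoid the countably many coefficient loci where an accidental cancellation could occur), hence $\text{trop}_w(P) = V(\text{ev}_w(\text{sp}(P))) = V(\text{sp}(P(\varphi))) = \text{trop}(P(\varphi))$. Choosing $\varphi$ generic simultaneously for $P_1$, $P_2$, and $P_1P_2$, multiplicativity of $\text{trop}_w$ then falls out of multiplicativity of the $n=0$ valuation $\text{trop}$ (Theorem~\ref{VBT_theorem}(3)) together with multiplicativity of evaluation. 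You cite the right prerequisite ($n=0$ multiplicativity) but attempt to transport it combinatorially rather than by specialization; the specialization is what makes the argument go through.
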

 
The crucial technical tool operative in the proof of Theorem~\ref{VBT_theorem_extended} is the following result, whose proof we defer to Appendix~\ref{Proof_of_Longue_prop}.

 \begin{prop}
 \label{Longue_prop}
For $n>0$, the map $\text{sp}:{K}_{m,n}\rightarrow \mathbb{B}_{m,n}$ that sends $P=\sum_{i=1}^d\alpha_{M_i}E_{M_i}$ to $\text{sp}(P)=\sum_{i=1}^d\text{sp}(\alpha_{M_i})E_{M_i}$ is a  ${K}$-algebra norm that satisfies $\text{sp}\circ\Theta_{{K}_{m,n}}(e_i)\leq \Theta_{\mathbb{B}_{m,n}}(e_i)\circ \text{sp}$ for  $i=1,\dots,m$.
\end{prop}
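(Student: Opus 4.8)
The plan is to verify the three defining conditions of a $K$-algebra non-Archimedean seminorm for $\text{sp}$, together with the claimed (in)equality with the differentials, all by reducing to the $n=0$ case (where $\text{sp}:K[\![T]\!]\to\mathbb{B}[\![T]\!]$ is already known to be a norm) and to elementary manipulations of supports of sums and products of differential monomials. The key observation throughout is that, since the differential indeterminates $x_{i,J}$ carry no coefficients, the support of a differential polynomial $P=\sum_i\alpha_{M_i}E_{M_i}$ is governed entirely by the supports of its coefficients $\alpha_{M_i}\in K[\![T]\!]$, together with the purely combinatorial bookkeeping of which differential monomials $E_M$ occur. So $\text{sp}(P)$ as defined makes sense and is the natural candidate.

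First I would treat the multiplicative units and the $K$-algebra normalization: $\text{sp}(0)=0$, and for $0\neq c\in K$ we have $\text{sp}(c)=1$ since $\text{sp}:K[\![T]\!]\to\mathbb{B}[\![T]\!]$ already has this property on constants; this handles condition (1) in the $K$-algebra form. Next, for subadditivity, given $P=\sum_i\alpha_{M_i}E_{M_i}$ and $Q=\sum_j\beta_{N_j}E_{N_j}$, collecting like differential monomials in $P+Q$ can only produce coefficients of the form $\alpha_{M}+\beta_{N}$ (when $E_M=E_N$) or $\alpha_M$ or $\beta_N$ alone; since $\text{sp}(\alpha+\beta)\le\text{sp}(\alpha)+\text{sp}(\beta)$ in $\mathbb{B}[\![T]\!]$ and the order on $\mathbb{B}_{m,n}$ is computed monomial-by-monomial, we get $\text{sp}(P+Q)\le\text{sp}(P)+\text{sp}(Q)$. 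For condition (3), submultiplicativity, the product $PQ$ expands as a sum of terms $(\alpha_{M_i}\beta_{N_j})E_{M_i}E_{N_j}$; collecting like differential monomials can only merge such terms additively, so again using submultiplicativity of $\text{sp}$ on $K[\![T]\!]$ (in fact $\text{sp}$ is known to be a norm there, hence multiplicative, but subadditivity after collecting is what we need) together with the first inequality we obtain $\text{sp}(PQ)\le\text{sp}(P)\text{sp}(Q)$. That $\text{sp}$ is a norm — no nonzero $P$ has $\text{sp}(P)=0$ — is immediate, since a nonzero $P$ has some nonzero coefficient $\alpha_{M_i}$, and then $\text{sp}(\alpha_{M_i})\neq 0$ because $\text{sp}$ is a norm on $K[\![T]\!]$, so the monomial $E_{M_i}$ appears in $\text{sp}(P)$ with nonzero coefficient.

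The remaining, and genuinely nontrivial, point is the inequality $\text{sp}\circ\Theta_{K_{m,n}}(e_i)\le\Theta_{\mathbb{B}_{m,n}}(e_i)\circ\text{sp}$. Here I would recall that $\Theta_{K_{m,n}}(e_i)$ acts by the Leibniz rule: on a single term $\alpha E_M$ it produces $\tfrac{\partial\alpha}{\partial t_i}\,E_M+\alpha\cdot(\text{sum over factors }x_{k,J}\text{ of }E_M\text{ of }m_{k,J}\cdot E_{M}\text{ with one }x_{k,J}\text{ replaced by }x_{k,J+e_i})$. On the boolean side, $\text{sp}$ first discards coefficients, and the crucial phenomenon is that over $\mathbb{B}$ the integer multiplicities $m_{k,J}$ all collapse to $1$, and — more importantly — the partial derivative rule \eqref{rule_deriv} over $K$ introduces the factor $i_j$, which can never create new support, but over $\mathbb{B}$ it simply shifts the exponent set. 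After applying $\text{sp}$, cancellations among the several Leibniz summands are possible over $\mathbb{B}$ (idempotent addition), whereas over $K$ one might think cancellations could destroy support — but $K$ has characteristic zero, so no coefficient that ``should'' be there actually vanishes; thus $\text{Supp}(\tfrac{\partial\alpha}{\partial t_i})=(\text{Supp}(\alpha)-e_i)_{\ge 0}$ exactly. The inequality (rather than equality) is forced precisely because after collecting like differential monomials on the $K$-side, genuine cancellations among the coefficient series can occur, so the support of $\Theta_{K_{m,n}}(e_i)P$ can be strictly smaller than what one reads off termwise, while $\Theta_{\mathbb{B}_{m,n}}(e_i)\text{sp}(P)$ records the full termwise union. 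I would make this precise by computing both sides on an arbitrary single term $\alpha E_M$, checking the termwise identity there, and then observing that passing from $\sum_i$ to the collected form only loses support on the $K$-side (via cancellation) and only merges terms on the $\mathbb{B}$-side (via idempotency), in both cases pushing the inequality in the required direction. The main obstacle is organizing the Leibniz expansion cleanly enough that the comparison of supports is transparent; this is exactly why the paper defers the full argument to Appendix~\ref{Proof_of_Longue_prop}, and I would follow suit, giving the single-term computation in detail and indicating how collection and idempotency yield the inequality in general.
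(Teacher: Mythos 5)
Your proposal is correct and follows essentially the same strategy as the paper's Appendix proof: verify the norm axioms by reducing to the $n=0$ case and then tracking how collecting like differential monomials on the $K$-side can only lose support (via cancellation), while on the $\mathbb{B}$-side it can only merge terms (via idempotency). Your analysis of the derivative inequality — the single-term equality $\text{Supp}(\partial\alpha/\partial t_i)=(\text{Supp}(\alpha)-e_i)_{\ge0}$ in characteristic zero, the collapse of multiplicities over $\mathbb{B}$, and the source of strict inequality in collection — matches what the paper records via the termwise Leibniz expansion. The paper differs only in bookkeeping: it introduces explicit \emph{additive and multiplicative supports} $\text{as}(\alpha,\beta)$, $\text{ms}(\alpha,\beta)$ to package the cancellation defect as a single auxiliary element of $\mathbb{B}[\![T]\!]$, which makes the chain of inequalities for $\text{sp}(PQ)$ cleaner to write down. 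One slip to correct: you remark parenthetically that $\text{sp}$ is ``a norm there, hence multiplicative'' on $K[\![T]\!]$; this is false — in the paper's Definition~\ref{dfn:quasivaluation} a norm is merely a seminorm that sends no nonzero element to zero, and multiplicativity is the extra property that singles out valuations. Indeed $\text{sp}$ is not multiplicative (e.g.\ $\text{sp}((1+t)(1-t))=1+t^2\neq 1+t+t^2=\text{sp}(1+t)\text{sp}(1-t)$); only $\text{trop}=V\circ\text{sp}$ is. Since you explicitly state you only use submultiplicativity, the error is cosmetic and does not affect the argument.
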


We now construct the seminorm $\trop_w$ for $w\in \mathbb{B}_m ^n$ in order to define the initial form of a differential polynomial $P\in K_{m,n}$ ``at" the support vector $w\in \mathbb{B}_m ^n$.  First we use the valuation $\trop:K[\![T]\!] \to{}V\mathbb{B}[T]$ to define the concept {\it initial term}. Note that any nonzero element $a=\sum_{I\in A}a_IT^I$ in $K[\![T]\!]$ and with $\text{Supp}(a)=A$ decomposes as 
\begin{equation}\label{New_decomposition}
    a=\overline{a}+(a-\overline{a}), \text{ where } \overline{a}=\sum_{\{I\::\:\trop(a_I)\nprec \trop(a)\}}a_IT^I=\sum_{I\in\text{Vert}(A)}a_IT^I
\end{equation}
 and $\overline{a}$ is the initial term of $a$ with respect to the valuation trop.
 
 We then use the section $e_K:\mathbb{B}[\![T]\!]\to K[\![T]\!]$ to obtain a $K$-lift  $e_K(w) \in K_m^n$
of the vector $w=(w_1,\ldots,w_n)\in \mathbb{B}_m ^n$. For every  monomial $aE$ in ${K}[\![T]\!][x_{i,J}]$, we interpret it as a differential monomial and set
\begin{equation}\label{trop_of_diff_monomial}
\trop_w(aE):=\trop(aE(e_K(w)))=\trop(a)\odot  V(E(w)).
\end{equation}

\begin{dfn}\label{tropical_quasivaluation}
{Given $w\in \mathbb{B}_m ^n$, we define $\trop_w:{K}[\![T]\!][x_{i,J}]\rightarrow V\mathbb{B}[T]$  by extending the assignment \eqref{trop_of_diff_monomial} by linearity; explicitly, we have
\begin{equation*}
\label{New_valuation}
    \trop_w\biggl(\sum_Ma_ME_M\biggr):=\bigoplus_M\trop_w(a_ME_M)=\bigoplus_M\bigl(\trop(a_M)\odot V(E_M(w))\bigr).
\end{equation*}
}
\end{dfn}

Note that  the definition of $\trop_w$ emulates the usual valuation specified by classical tropical algebraic geometry, and the outcome is  a multiplicative $K$-algebra  seminorm. We spell this out explicitly in Corollary~\ref{coro_Longue_prop} of Proposition \ref{Longue_prop}, whose proof we also defer to Appendix~\ref{Proof_of_Longue_prop}.

We now use the valuation $\trop_w$ together with the 
decomposition \eqref{New_decomposition} to define the {\it initial form}  $\text{in}^{\ast}_w(P)$ of $P\in {K}[\![T]\!][x_{i,J}]$ with respect to the support vector $w\in \mathbb{B}_m ^n$.
  
  \begin{dfn}\label{initial_form}
{
The }initial form {$\text{in}_w^{\ast}(P)$ of $P=\sum_Ma_ME_M$ at $w\in \mathbb{B}_m ^n$ is the differential polynomial}
\begin{equation}\label{initial_modified}
    \text{in}_w^{\ast}(P)=\sum_{\{M:\:\trop_w(a_ME_M)\nprec \trop_w(P)\}}\overline{a_M}E_M.
\end{equation}
\end{dfn}

The upshot of Remark~\ref{properties_of_irrelevant} is that every $P\in {K}[\![T]\!][x_{i,J}]$ decomposes as 
\[
    P=\text{in}_w^{\ast}(P)+P^{\ast}, \text{ in which }\trop_w(P^{\ast})\prec\trop_w(P)=\trop_w(\text{in}_w^{\ast}(P))
\]
where $P^{\ast}=P-\text{in}_w^{\ast}(P)$ is called {\it $w-$irrelevant} part of $P$.

\begin{ex}\label{initial_not_mult}
{The map $\text{in}_w^{\ast}: {K}[\![T]\!][x_{i,J}]\xrightarrow[]{}{K}[\![T]\!][x_{i,J}]$ fails to be multiplicative when $m>1$, as it restricts to the map ${K}[\![T]\!]\xrightarrow[]{}{K}[\![T]\!]$ that sends $a$ to $\overline{a}$, and the support of the product of two vertex polynomials is in general properly contained in the Minkowski sum of their supports (see Proposition~\ref{char_of_prod}).
}
  \end{ex}

   \begin{rem}
   
    Definition~\ref{initial_form} is a modified version of initial form from the classical setting; 
    it also generalizes the definition of initial form given in the preprint \cite{HG19} when $m=1$, in which case trop is a Krull valuation. For a related construction, see \cite{FT20}.
    \end{rem}
    
     \section{A tropical fundamental theorem for differential algebraic geometry}
 \label{Section_TFT}
In this section we present the extended fundamental theorem in the differential context. Here $R={K}_{m,n}$, where ${K}$ is a field of characteristic zero; ${\rm S}=\mathbb{B}_{m,n}$ as in section~\ref{Tropical}; and the seminorm (valuation) is a support map. The differential enhancement of trop given by the diagram \eqref{Partial_diff_scheme} is the key to proving the fundamental theorem, which characterizes what we call {\it tropical DA varieties} in three distinct ways.
We'll discuss the usefulness of the tropical fundamental theorem in extracting combinatorial information about solutions to systems of differential equations.

 Given a system of differential polynomials $\Sigma$ in $K_{m,n}$, the best possible outcome would be to 
 compute the associated DA variety 
 $\text{Sol}(\Sigma)=\{\varphi\in K_m ^n:P(\varphi)=0 \text{ for all }P\in\Sigma\}$. This is often hard; an easier subsidiary task is to compute its set of support series  $\text{sp}(\text{Sol}(\Sigma))\subset\mathbb{B}_m^n$, or its DA tropicalization. Explicitly, we will concentrate on understanding
\[
\text{sp}(\text{Sol}(\Sigma)):=\{a=(a_1,\ldots,a_n)\in\mathbb{B}_m^n\::\:a=\text{sp}(\varphi)\text{ for some }\varphi\in\text{Sol}(\Sigma)\}
\]
where $\text{sp}:K_m ^n\rightarrow\mathbb{B}_m ^n$ extends $\text{sp}:K_m \rightarrow\mathbb{B}_m $ coordinate-wise.

To define the DA tropicalization of $\text{Sol}(\Sigma)$, we'll make use of the following notion.

 
\begin{dfn}
{An ideal of the differential ring $(R,D)$ is} \emph{differential} {whenever it is closed under the action of $D$.}
\end{dfn}
Now say that $(R,D)$ is a differential ring, and that $X$ is a subset of $R$. Following the convention adopted in \cite{HG19}, we let $(X)$ and $[X]$ denote the algebraic and differential ideals generated by $X$, respectively. By definition, $[X]$ is the intersection of all differential ideals of $R$ that contain $X$; so {\it $[X]$ is the algebraic ideal of $R=K_{m,n}$ spanned by the set $\{\Theta_{R}(J)(P):\:P\in X,\:J\in\mathbb{N}^m\}$}.

\begin{rem}\label{rem_sum_not_value}
{Given a semiring ${\rm S}$ and an element $P\in {\rm S}_{m,n}$, Remark~\ref{commutation} establishes that $\tfrac{\partial }{\partial t_i}(P(a))=\tfrac{\partial P}{\partial t_i}(a)$ for every 
   $a\in {\rm S}_m ^n$ and $i\in\{1,\ldots,m\}$. Now assume that ${\rm S}=K$; then $P(a)=0$ implies that $\tfrac{\partial P}{\partial t_i}(a)=0$; in particular,  
a system $\Sigma\subseteq K_{m,n}$ and its associated differential ideal $[\Sigma]$  define the same DA varieties.}

{We saw in Proposition \ref{prop_reformulation_solution} that tropical vanishing depends  on the 
realization of $P(a)$ as a sum of monomials and not on the {\it value} of $P(a)$. For example, when $P=x_{1,0}+x_{(0,1)}+(t^2+u^2)\in \mathbb{B}_{2,1}$, $a=t^2u+u^3\in\mathbb{B}_2$ is a solution of $P$, but not of $\tfrac{\partial P}{\partial t}$.}
\end{rem}

\begin{dfn}
\label{Definition_TDAV}
Given $\Sigma\subseteq {K}_{m,n}$, 
the tropicalization {$\text{Sol}(\text{sp}([\Sigma]))$ of the DA variety  $\text{Sol}(\Sigma)$ is 
\[
   \text{Sol}(\text{sp}([\Sigma])):= \bigcap_{p\in \text{sp}([\Sigma])}\text{Sol}(p)
\]}
where $\text{sp}:K_{m,n}\rightarrow \mathbb{B}_{m,n}$ is the norm of Proposition~\ref{Longue_prop}.
\end{dfn}

\begin{ex}[Tropical DA hypersurfaces]\label{ex_trop_hyp}
{Given $P\in K_{m,n}$, to compute the tropicalization of the associated hypersurface $\text{Sol}(P)=\{\varphi\in K_m ^n:P(\varphi)=0\}$ we first compute the differential ideal $[P]\subset K_{m,n}$; the tropicalization of $\text{Sol}(P)$ is then $\bigcap_{p\in\text{sp}([P])}\text{Sol}(p)$. }
\end{ex}

The following construction from \cite{FGH20+} is operative in the fundamental theorem, and generalizes the corresponding construction in the $m=1$ case from \cite{HG19}.

\begin{dfn}
\label{initial_ideal_def}
{Given $w\in\mathbb{B}_m ^n$, the }initial ideal {$\text{in}_w^*(G)$ of a differential ideal $G\subset {K}_{m,n}$ is the algebraic ideal generated by the initial forms $\{\text{in}_w^*(P):P\in G\}$ in ${K}_{m,n}$ defined in equation \eqref{initial_modified}.}
\end{dfn}

We now have all of the ingredients required to state the tropical fundamental theorem \cite[Thm. 9.6]{FGH20+} for differential algebraic geometry.

\begin{thm}[Fundamental theorem of tropical differential algebraic geometry] 
\label{EFT}
Fix $m,n\geq1$, let $K$ be an uncountable, algebraically closed field of characteristic zero, and let $[\Sigma]$ and $\text{Sol}(\Sigma)$ be the differential ideal and the DA variety associated to $\Sigma\subset K_{m,n}$ respectively. Then the following subsets of $\mathbb{B}_m^n$ coincide:
\begin{enumerate}
\item the tropicalization $\text{Sol}(\text{sp}([\Sigma])) = \bigcap_{p\in \text{sp}([\Sigma])}\text{Sol}(p)$ of   $\text{Sol}(\Sigma)$ as in Definition \ref{Definition_TDAV};
\item the set $\{w\in \mathbb{B}_m ^n\::\:\text{in}_w^*([\Sigma])\text{ is monomial-free}\}$ as in Definition \ref{initial_ideal_def}; and
\item the set $\text{sp}(\text{Sol}(\Sigma))=\{\text{sp}(\varphi)\::\:\varphi\in\text{Sol}(\Sigma)\}$ of coordinatewise supports of points in $\text{Sol}(\Sigma)$.
\end{enumerate}
\end{thm}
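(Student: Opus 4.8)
The plan is to prove the three-fold equality by establishing a cycle of inclusions among the sets (1), (2), (3) of $\mathbb{B}_m^n$, leveraging the valuative formalism built up in Sections~\ref{Section_QVIF}--\ref{Section_Initial_forms}. The cleanest route is to show $(3)\subseteq(1)$, then $(1)\subseteq(2)$, then $(2)\subseteq(3)$; the first two are essentially formal consequences of the properties of the support map $\text{sp}$ and the seminorms $\trop_w$, while the last is the substantive implication requiring the hypotheses on $K$.

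First I would handle $(3)\subseteq(1)$: suppose $w=\text{sp}(\varphi)$ for some $\varphi\in\text{Sol}(\Sigma)$. Given $p\in\text{sp}([\Sigma])$, write $p=\text{sp}(P)$ for suitable $P\in[\Sigma]$, so $P(\varphi)=0$ (using Remark~\ref{rem_sum_not_value}, which says $\Sigma$ and $[\Sigma]$ define the same DA variety). I need $w\in\text{Sol}(p)$, i.e.\ that $V(p(w))$ tropically vanishes in $V\mathbb{B}[T]$ in the sense of Proposition~\ref{prop_reformulation_solution}. The point is that $\text{sp}$ commutes with evaluation and differentiation (Theorem~\ref{VBT_theorem}, Proposition~\ref{Longue_prop}), so $p(w)\leq \text{sp}(P(\varphi)) = \text{sp}(0)=0$ monomial-by-monomial once one tracks cancellations; more carefully, each monomial appearing in $p(w)$ with a unique contributing term of $p$ would force a corresponding surviving monomial of $P(\varphi)$, contradicting $P(\varphi)=0$. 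This is the standard ``solutions tropicalize'' direction and is routine given the commutativity diagrams.

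Next, $(1)\subseteq(2)$: if $w\in\text{Sol}(\text{sp}([\Sigma]))$, I must show $\text{in}_w^*([\Sigma])$ is monomial-free. Equivalently, if some $P\in[\Sigma]$ had $\text{in}_w^*(P)$ equal to (or divisible by) a monomial, I derive a contradiction. Here the decomposition $P=\text{in}_w^*(P)+P^*$ with $\trop_w(P^*)\prec\trop_w(P)$ from Definition~\ref{initial_form}, together with the multiplicativity of $\trop_w$ (Theorem~\ref{VBT_theorem_extended}(2)) and the relevancy relation $\prec$ of Definition~\ref{dfn_relevant}, shows that a monomial initial form of $\text{in}_w^*(P)$ would force $V(\text{sp}(P)(w))$ to contain a term contributed by a single monomial of $\text{sp}(P)$, violating tropical vanishing. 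The technical content is bookkeeping with the $\prec$ relation and the fact (Example~\ref{initial_not_mult}, Proposition~\ref{char_of_prod}) that vertex-polynomial multiplication can shrink supports—one must check that the ``surviving'' monomial of the initial form is genuinely relevant.

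\textbf{The main obstacle.} The hard part is $(2)\subseteq(3)$: given $w$ with $\text{in}_w^*([\Sigma])$ monomial-free, I must produce an actual power series solution $\varphi\in\text{Sol}(\Sigma)$ with $\text{sp}(\varphi)=w$. This is where the hypotheses that $K$ is uncountable and algebraically closed of characteristic zero are indispensable. The strategy, following the architecture of \cite{FGH20+}, is to build $\varphi$ coefficient-by-coefficient along a term order refining the partial order, at each stage using that the relevant initial forms are monomial-free to guarantee that the linear (or, after the Rabinowitsch-type trick, polynomial) conditions on the next batch of unknown coefficients are non-degenerate—so the solution locus over $K$ is nonempty, in fact of positive dimension, which is what uncountability of $K$ is used to sustain across infinitely many stages (a countable intersection of dense opens in an uncountable-dimensional space is nonempty). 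One sets up the combinatorics of which coefficients $x_{i,J}$ are ``on'' (prescribed by $w$) versus ``off'', shows that the equations of $\Sigma$ evaluated on the partial series reduce modulo higher-order terms to the initial forms $\text{in}_w^*(P)$, and invokes monomial-freeness to solve. I expect that rather than reproving this from scratch, the cleanest exposition cites the corresponding step of \cite{FGH20+}, observing that our reformulation via $\trop_w$ and $\prec$ matches their $\text{in}_w$ construction term-for-term (Definition~\ref{initial_form} was calibrated precisely for this), so the combinatorial heart of their argument transports verbatim; the novelty claimed in the introduction is the conceptual packaging, not a new proof of this implication.
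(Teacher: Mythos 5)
The paper does not actually write out a proof of Theorem~\ref{EFT}: it states the theorem with the bracketed citation \cite[Thm.\ 9.6]{FGH20+} and treats it as established in the literature. The ``novelty'' advertised in the introduction is the conceptual repackaging of $\text{in}_w^*$ and the tropicalization scheme in terms of the seminorms $\trop_w$ and the relevancy relation $\prec$, not a new proof; where the paper does give proofs of fundamental-theorem--type statements is for the fraction-field version in Theorem~\ref{EEFT} (and Theorem~\ref{Previous_EEFT}), both of which are \emph{deduced from} Theorem~\ref{EFT}(2) rather than the other way around.

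Your proposal is therefore doing more work than the paper, and its structure is sound. The decomposition into the cycle $(3)\subseteq(1)\subseteq(2)\subseteq(3)$ and the diagnosis of $(2)\subseteq(3)$ as the hard, hypotheses-heavy lifting step are exactly right, and your observation that Definition~\ref{initial_form} was ``calibrated'' to match the $\text{in}_w$ of \cite{FGH20+} is the genuine content the paper is relying on when it cites that theorem as applying verbatim to its reformulation. Two places merit tightening if you were to write this out in full. First, in $(3)\subseteq(1)$ the inequality $\text{sp}(P(\varphi))\leq \text{sp}(P)(w)$ goes the wrong way for the conclusion you want: $\text{sp}(0)=0$ gives no constraint on $\text{sp}(P)(w)$, and the actual argument must show that if a vertex of $V(\text{sp}(P)(w))$ came from a \emph{unique} monomial of $\text{sp}(P)$ then that monomial's coefficient would survive in $P(\varphi)$, contradicting $P(\varphi)=0$; this is a ``no-cancellation'' argument on the coefficient of a fixed $T^I$, not a seminorm inequality. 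Second, in $(1)\subseteq(2)$ you should argue at the level of the \emph{ideal} $\text{in}_w^*([\Sigma])$ and not just of individual initial forms $\text{in}_w^*(P)$; a monomial in $\text{in}_w^*([\Sigma])$ is a $K_{m,n}$-linear combination of initial forms, and showing this forces a tropical-vanishing violation for some $p\in\text{sp}([\Sigma])$ requires the same lifting bookkeeping the paper carries out explicitly in the proof of Theorem~\ref{EEFT}. Given that the paper itself punts this entire argument to \cite{FGH20+}, your concluding move of citing that reference for the combinatorial heart is both correct and faithful to what the paper actually does.
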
 

The tropical fundamental theorem \ref{EFT} characterizes tropical DA varieties as (1) sets of formal $\mathbb{B}$-power series solutions of the tropical differential system $\text{sp}([\Sigma])$ associated to a differential ideal $[\Sigma]\subset K_{m,n}$; (2) weight vectors that single out monomial-free initial ideals of a fixed differential ideal $[\Sigma]$; and (3) as support series of all formal ${K}$-power series solutions $\text{Sol}(\Sigma)$ of a system $\Sigma\subset K_{m,n}$. It establishes, in particular, that any formal $\mathbb{B}$-power series solution of the tropical system $\text{sp}([\Sigma])$ lifts to a formal ${K}$-power series solution for $\Sigma$.

\begin{ex}
\label{ex_trop_hyp_2}
{
Applying Theorem \ref{EFT}, we see that the tropical DA hypersurface associated to $P\in K_{m,n}$ is $X= \{\text{sp}(\varphi):\:\varphi\in K_m ^n,\: P(\varphi)=0\}$; cf. Example~\ref{ex_trop_hyp}.}
\end{ex}

\begin{rem}
{Important examples of base fields ${K}$ to which our theory applies are $\mathbb{C}$ and $\mathbb{Q}_p^{\text{alg}}$, the algebraic closure of the field of $p$-adic numbers. Each of these fields is equipped with a natural topology which comes from an absolute value, which is archimedean in the complex case ($\mb{C}_\infty$), and non-archimedean in the $p$-adic case; let $\mathbb{C}_p$ denote the metric completion of $\mathbb{Q}_p^{\text{alg}}$. 
It is natural to ask for conditions under which the solutions $\text{Sol}(\Sigma)$ are tuples of analytic functions in a neighborhood of the origin with respect to the canonical metrics inherited from $\mb{C}_\infty$ and $\mb{C}_p$, respectively.}
\end{rem}

\begin{rem}
\label{rem_paradigms}
{According to \cite{BH19}, the solution set of any system $\Sigma\subset K_{m,n}$ is equal to the solution set of a \emph{finite} system $\{P_1,\ldots,P_s\}\subset \Sigma$. Each polynomial $P_i$ imposes an infinite number of conditions on the coefficients of the solutions $\varphi\in\text{Sol}(\Sigma)$.}

{
\hspace{-3pt}On the other hand, any given polynomial $Q\in\mathbb{B}_{m,n}$ imposes only a finite number of conditions on the supports of the solutions $\varphi\in\text{Sol}(Q)$, and in view of 
\[
\text{sp}(\text{Sol}(P_1,\ldots,P_s))=\text{sp}(\text{Sol}(\Sigma))=\text{Sol}(\text{sp}([\Sigma]))
\]
we 
require an infinite number of elements $Q\in \text{sp}([\Sigma])$ in order to describe this common set. 
The upshot is the following dichotomy between classical and tropical theories of algebraic differential equations:
 \begin{itemize}
     \item the classical theory requires solving a {\it finite number of infinite systems} of algebraic equations, while
     \item the tropical theory requires solving an {\it infinite number of finite systems} of boolean equations.
 \end{itemize}}
\end{rem}

In Theorem~\ref{EEFT} of the next section, we will extend items (2) and (3) of the fundamental theorem to the fraction field $K(\!(T)\!)$ of $K[\![T]\!] $.

\section{From $K[\![T]\!] $ to $K(\!(T)\!)$}
\label{Section_Fromringtofield}

Paraphrasing \cite{HG19}, {\it it is more convenient to develop Groebner basis theory over fields than rings.} With this principle in mind, in this section we generalize the valuative results obtained for the differential ring $K_m=(K[\![T]\!] ,D)$ in section~\ref{Section_QVIF} to the differential field $F_m:=(K(\!(T)\!),D)$. 
The results in this section also extend those of \cite[Sect. 2.2]{HG19} to the $m>1$ case.
The key ingredient in all of this is the following property of the semiring $V\mathbb{B}[T]$ of vertex polynomials.

\begin{prop}
\label{Prop_MC}
The semiring $V\mathbb{B}[T]$ is {integral} (or {multiplicatively cancellative}); that is, if we have $a\odot c=b\odot c$, then  $c\neq0$ or $a=b$.
\end{prop}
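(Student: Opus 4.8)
The plan is to reduce multiplicative cancellation in $V\mathbb{B}[T]$ to a statement about Newton polyhedra (equivalently, Newton polytopes), using the identification of $V\mathbb{B}[T]$ with the semiring of vertex sets $\mathbb{T}_m$ and the dictionary between vertex polynomials and their Newton polytopes $\Delta_a$ provided by \eqref{newton_decomposition} and Proposition~\ref{char_of_prod}. Concretely, suppose $c\neq 0$ and $a\odot c=b\odot c$; I want to conclude $a=b$. Since $\text{Conv}:V\mathbb{B}[T]\to\mathcal{P}_{\mathbb{Z}^m}$ is injective by Proposition~\ref{char_of_prod}, it suffices to show $\Delta_a=\Delta_b$. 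By the same proposition the relation $a\odot c=b\odot c$ gives $\mathcal{V}((\Delta_a\otimes\Delta_c)+\mathbb{R}_{\geq 0}^m)=\mathcal{V}((\Delta_b\otimes\Delta_c)+\mathbb{R}_{\geq 0}^m)$, hence $\Delta_a\otimes\Delta_c=\Delta_b\otimes\Delta_c$ as lattice polytopes (two polytopes with the same vertex set agree). So the whole statement collapses to the cancellativity of Minkowski addition on polytopes: if $\Delta_a+\Delta_c=\Delta_b+\Delta_c$ for polytopes in $\mathbb{R}^m$, then $\Delta_a=\Delta_b$.

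The next step is to prove that cancellation law for Minkowski sums. This is classical — it follows, for instance, from passing to support functions: $\phi_{P+Q}=\phi_P+\phi_Q$ pointwise under the embedding $\phi$ of Subsection~\ref{Sub_Section_Relationship}, so $\Delta_a+\Delta_c=\Delta_b+\Delta_c$ becomes $\phi_{\Delta_a}+\phi_{\Delta_c}=\phi_{\Delta_b}+\phi_{\Delta_c}$, and since these are $\overline{\mathbb{Z}}$-valued (in particular, finite-valued) piecewise linear functions, we may subtract $\phi_{\Delta_c}$ to get $\phi_{\Delta_a}=\phi_{\Delta_b}$, whence $\Delta_a=\Delta_b$ because a polytope is recovered from its support function. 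Alternatively one gives the elementary direct argument: for each fixed linear functional $u$, the face of $\Delta_a+\Delta_c$ in direction $u$ is the Minkowski sum of the faces of $\Delta_a$ and $\Delta_c$ in direction $u$; inducting on dimension reduces to the case of segments on a line, where cancellation is immediate. Either route is routine.

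I expect the main obstacle to be bookkeeping rather than mathematics: one must be careful that the various Newton objects live where they should — $\Delta_a,\Delta_b,\Delta_c$ are genuine (bounded, nonempty) lattice polytopes exactly because $c\neq 0$ and the supports are nonempty, and the passage from $a\odot c$ back to $\Delta_a\otimes\Delta_c$ must use that $\text{New}(A+C+\mathbb{N}^m)=(\Delta_a\otimes\Delta_c)+\mathbb{R}_{\geq 0}^m$ together with the fact that the bounded vertices of $(\Delta_a\otimes\Delta_c)+\mathbb{R}_{\geq 0}^m$ are precisely $\mathcal{V}(\Delta_a\otimes\Delta_c)$ (by the distinguished-representative statement \cite[Thm 4.1.3]{RW} already invoked after \eqref{newton_decomposition}). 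A final remark: the quoted statement's phrasing ``$c\neq 0$ \emph{or} $a=b$'' should be read as the contrapositive formulation ``$a\odot c=b\odot c$ and $c\neq 0$ imply $a=b$,'' and the degenerate case $c=0$ is vacuous since then $a\odot c=b\odot c=0$ automatically; I would note this explicitly at the start of the proof.
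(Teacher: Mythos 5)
There is a genuine gap in your reduction. You claim that equality of the vertex sets $\mathcal{V}\bigl((\Delta_a\otimes\Delta_c)+\mathbb{R}_{\geq 0}^m\bigr)=\mathcal{V}\bigl((\Delta_b\otimes\Delta_c)+\mathbb{R}_{\geq 0}^m\bigr)$ forces $\Delta_a\otimes\Delta_c=\Delta_b\otimes\Delta_c$, on the grounds that ``the bounded vertices of $(\Delta_a\otimes\Delta_c)+\mathbb{R}_{\geq0}^m$ are precisely $\mathcal{V}(\Delta_a\otimes\Delta_c)$.'' This last assertion is false: the vertex set of the unbounded Newton polyhedron $(\Delta_a + \Delta_c)+\mathbb{R}_{\geq0}^m$ is generally a \emph{proper} subset of $\mathcal{V}(\Delta_a+\Delta_c)$, because adding the recession cone $\mathbb{R}_{\geq0}^m$ can absorb vertices of the Minkowski sum. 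The paper's own Example~\ref{polytope} exhibits exactly this phenomenon --- there $V(ab)=A\odot B$ is strictly contained in $\mathcal{V}(\Delta_a\otimes\Delta_b)$ --- so the distinguished-representative theorem from \cite{RW} (which produces \emph{some} polytope $\Delta_{ac}$ with $\mathcal{V}(\Delta_{ac})=V(ac)$) does not imply $\Delta_{ac}=\Delta_a+\Delta_c$. Consequently your chain of implications stalls: from $a\odot c=b\odot c$ you only get equality of Newton \emph{polyhedra}, not equality of the Minkowski sum \emph{polytopes}.

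The support-function idea can be repaired, but you should run it on the unbounded objects throughout. From $a\odot c=b\odot c$ you legitimately obtain $\text{New}(A)+\text{New}(C)=\text{New}(B)+\text{New}(C)$ (Minkowski sum of polyhedra with common recession cone $\mathbb{R}_{\geq0}^m$). The support functions of these polyhedra are finite precisely on $-\mathbb{R}_{\geq0}^m$, they add under Minkowski sum, and on that orthant one may subtract $\phi_{\text{New}(C)}$ to conclude $\phi_{\text{New}(A)}=\phi_{\text{New}(B)}$, hence $\text{New}(A)=\text{New}(B)$; since $a$ and $b$ are vertex polynomials, $A=\mathcal{V}(\text{New}(A))=\mathcal{V}(\text{New}(B))=B$. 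That corrected argument is genuinely different from the paper's proof, which avoids any cancellation lemma for Minkowski sums and instead argues directly with normal fans: it first shows that each vertex $I$ of $\text{New}(A)$ pairs with \emph{some} vertex $J$ of $\text{New}(C)$ to produce a vertex $I+J$ of the sum (because the normal fans are complete, so the normal cone of $I$ must meet some normal cone of $\text{New}(C)$), and then derives a contradiction from the unique-decomposition property (Proposition~\ref{char_of_prod}) if $A\setminus B\neq\emptyset$. Your route, once fixed, trades that combinatorial argument for the classical cancellativity of support functions; either is a reasonable proof, but as written your version relies on a false identification of vertex sets.
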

\begin{proof}
Let $a,b\in V\mathbb{B}[T]$ be nonzero elements with supports $A$ and $B$ respectively. We will show first that for any $I\in A$, there exists $J\in B$ such that $I+J\in a\odot b$. Indeed, intersecting the normal fans of $\text{New}(A)$ and $\text{New}(B)$ results in the normal cone $C_I$ of each vertex $I$ of $\text{New}(A)$ being subdivided by cones of $\text{New}(B)$.


Since both normal fans are maximal dimensional, there is necessarily some cone $C_J$ of $\text{New}(B)$ that contains or has non-empty intersection with $C_I$, so $I+J$ is a vertex of $\text{New}(A)+\text{New}(B)$, i.e., $I+J\in a\odot b$.

Now suppose $a\odot c=b\odot c$, where  $c\neq0$ has support $C$; we will show that $A=B$. To do so we argue indirectly, assuming for the sake of argument that $A \setminus B \neq \emptyset$. Say $K\in A\setminus B$, and that $C=\{J_1,\ldots,J_s\}$. As $a\odot c=b\odot c$, no $K+J_i$, $i=1,\dots,s$ belongs to $b\odot c$; but this contradicts the fact that $K+J_i\in a\odot c$ for some $J_i$. Thus $V(a) \subset V(b)$; by symmetry, we conclude that $V(a)=V(b)$.
\end{proof}
\begin{rem}
Let $\sim$ denote the semiring congruence  on $\mathbb{B}[\![T]\!]$ consisting of pairs $(a,b)$ of boolean power series for which $ac=bc$ for some $c \neq 0$. Suppose that $a\sim b$; as $V$ is a homomorphism, we then have $V(ac)=V(a)\odot V(c)=V(b)\odot V(c)=V(bc)$, and it follows from \eqref{Prop_MC} that $V(a)=V(b)$.

The semiring $\faktor{\mathbb{B}[\![T]\!] }{\sim}$ is cancellative by definition, and we have a commutative diagram of surjective maps of semirings
\begin{equation*}
    \xymatrix{\mathbb{B}[\![T]\!] \ar[d]_\pi\ar[dr]^V&\\
    \faktor{\mathbb{B}[\![T]\!] }{\sim}\ar[r]_{\widetilde{V}}&V\mathbb{B}[T].}
\end{equation*}
\end{rem}

As $V\mathbb{B}[T]$ has no nontrivial zero divisors, there is a well-defined semifield $V\mathbb{B}(T)=\text{Frac}(V\mathbb{B}[T])$ of fractions $\frac{a}{b}$ with $a,b\in  V\mathbb{B}[T]$, $b\neq0$. The product and sum of fractions in $V\mathbb{B}(T)$ are defined as usual by $\frac{a}{b}\odot\frac{c}{d}=\frac{a\odot c}{b\odot d}$, and  $\frac{a}{b}\oplus\frac{c}{d}=\frac{a\odot d\oplus b\odot c}{b\odot d}$.

The upshot of Proposition~\ref{Prop_MC} is that  $\frac{a}{b}=\frac{c}{d}$ in $V\mathbb{B}(T)$ if and only if $a\odot d=b\odot c$; furthermore, the map $ V\mathbb{B}[T]\ra V\mathbb{B}(T)$ that sends $a$ to $\frac{a}{1}$ is an embedding.  For a detailed account, see \cite[Sect. 11]{JG}.

 \begin{coro}
 \label{extension_trop}
  The map $\text{trop}:K(\!(T)\!) \rightarrow V\mathbb{B}(T)$ defined by $\trop(\frac{f}{g}):=\frac{\trop(f)}{\trop(g)}$ is a  $K$-algebra valuation.
 \end{coro}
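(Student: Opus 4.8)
The plan is to verify that the formula $\trop(\frac fg):=\frac{\trop(f)}{\trop(g)}$ is well-defined and that the resulting map inherits all the axioms of a $K$-algebra valuation from the valuation $\trop:K[\![T]\!]\to V\mathbb{B}[T]$ of Theorem~\ref{VBT_theorem}(3), exploiting the integrality of $V\mathbb{B}[T]$ established in Proposition~\ref{Prop_MC}. First I would check well-definedness: if $\frac fg=\frac hk$ in $K(\!(T)\!)$, then $fk=gh$ in $K[\![T]\!]$, so applying the valuation $\trop$ on $K[\![T]\!]$ (which is multiplicative) gives $\trop(f)\odot\trop(k)=\trop(g)\odot\trop(h)$; by the characterization of equality of fractions in $V\mathbb{B}(T)$ noted just after Proposition~\ref{Prop_MC}, this says exactly $\frac{\trop(f)}{\trop(g)}=\frac{\trop(h)}{\trop(k)}$ in $V\mathbb{B}(T)$. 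Note also that $\trop$ on $K[\![T]\!]$ is a norm, so $\trop(g)\neq 0$ whenever $g\neq 0$, which guarantees the fraction on the right-hand side makes sense.

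The axioms then follow termwise. Multiplicativity is immediate: $\trop\bigl(\frac fg\cdot\frac hk\bigr)=\trop\bigl(\frac{fh}{gk}\bigr)=\frac{\trop(fh)}{\trop(gk)}=\frac{\trop(f)\odot\trop(h)}{\trop(g)\odot\trop(k)}=\trop\bigl(\frac fg\bigr)\odot\trop\bigl(\frac hk\bigr)$, using multiplicativity of $\trop$ on $K[\![T]\!]$. The $K$-algebra normalization $\trop(\frac c1)=\frac{\trop(c)}{1}=1$ for nonzero $c\in K$ is inherited directly. For subadditivity, put fractions over a common denominator: $\frac fg\oplus\frac hk=\frac{fk+gh}{gk}$, and one needs $\trop(fk+gh)\leq\trop(f)\odot\trop(k)\oplus\trop(g)\odot\trop(h)$ in $V\mathbb{B}[T]$; this is the subadditivity $\trop(u+v)\leq\trop(u)\oplus\trop(v)$ for $u=fk$, $v=gh$ together with multiplicativity, and the inequality is preserved upon dividing by $\trop(gk)=\trop(g)\odot\trop(k)$ since the order on $V\mathbb{B}(T)$ extends that on $V\mathbb{B}[T]$ and $\oplus$ by a fixed element preserves $\leq$. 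Finally $\trop$ is a norm because if $\trop(\frac fg)=\frac{\trop(f)}{\trop(g)}=0$ then $\trop(f)=0$, forcing $f=0$.

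The only genuinely substantive point is checking that the order relation and the algebraic operations on the semifield of fractions $V\mathbb{B}(T)$ behave compatibly with those on $V\mathbb{B}[T]$ — i.e., that $a\leq b$ in $V\mathbb{B}[T]$ implies $\frac ac\leq\frac bc$ in $V\mathbb{B}(T)$, and that the subadditivity inequality survives passage to fractions. This is a formal consequence of the construction of $\text{Frac}$ of an additively idempotent integral semiring, for which the cited reference \cite[Sect. 11]{JG} suffices; I would simply remark that $\frac ac\oplus\frac bc=\frac{a\oplus b}{c}$ and that $a\leq b\iff a\oplus b=b\iff (a\oplus b)\odot c=b\odot c\iff \frac ac\oplus\frac bc=\frac bc$, using Proposition~\ref{Prop_MC} for the middle equivalence. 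Everything else is bookkeeping, so the proof is short.
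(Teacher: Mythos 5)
Your proof is correct and follows essentially the same approach as the paper, which simply asserts that the result follows immediately from Theorem~\ref{VBT_theorem} (that $\trop:K[\![T]\!]\to V\mathbb{B}[T]$ is a $K$-algebra valuation) together with the fraction-semifield construction enabled by Proposition~\ref{Prop_MC}. You have spelled out the well-definedness check and the termwise verification of the axioms — all of which the paper leaves implicit — and your reduction of the subadditivity inequality to the $K[\![T]\!]$ case via common denominators and the cancellation property is exactly the right bookkeeping.
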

 
 \begin{proof}
 Follows immediately from Theorem~\ref{VBT_theorem}, which establishes that $\text{trop}:K[\![T]\!] \rightarrow V\mathbb{B}[T]$ has the same property.
 \end{proof}
 
 Now let $F_{m,n}:=(K(\!(T)\!) [x_{i,J}: i,J],D)$ denote the  differential ring of differential polynomials with coefficients in the differential field $K(\!(T)\!) $. By clearing denominators, every $P\in F_{m,n}$ decomposes as
 \begin{equation}
 \label{from_field_to_ring}
     \lambda P=Q, \text{ where }\lambda\in K[\![T]\!] \setminus\{0\}\text{ and }Q\in K_{m,n}.
 \end{equation}
 The decomposition \eqref{from_field_to_ring} is the key to extending our results from $K[\![T]\!]$ to $K(\!(T)\!)$.
 
  \begin{coro}
  Given $w\in\mathbb{B}_m ^n$, the map $\trop_w:K(\!(T)\!) [x_{i,J}: i,J] \ra V\mathbb{B}(T)$ that sends $P=\sum_Ma_ME_M$ to $\trop_w(P):=\bigoplus_M(\trop(a_M)\odot V(E_M(w)))$ is a multiplicative $K$-algebra  seminorm.
 \end{coro}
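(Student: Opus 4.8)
The plan is to reduce this corollary to the already-established case of $K[\![T]\!]$-coefficients, via the clearing-of-denominators decomposition \eqref{from_field_to_ring}. For $P\in F_{m,n}$, write $\lambda P = Q$ with $\lambda \in K[\![T]\!]\setminus\{0\}$ and $Q\in K_{m,n}$, and define $\trop_w(P) := \trop(\lambda)^{-1}\odot\trop_w(Q)$, where $\trop_w(Q)\in V\mathbb{B}[T]$ is the value produced by Definition~\ref{tropical_quasivaluation} and $\trop(\lambda)\in V\mathbb{B}[T]$ is invertible in $V\mathbb{B}(T)$ by Corollary~\ref{extension_trop}. The first step is to check this is well-defined, independent of the choice of $\lambda$: if $\lambda P = Q$ and $\mu P = Q'$ then $\mu Q = \lambda Q'$ in $K_{m,n}$, and I would apply the $K[\![T]\!]$-statement (the Corollary just above, or equivalently Corollary~\ref{coro_Longue_prop} together with multiplicativity from Proposition~\ref{Longue_prop}) to get $\trop(\mu)\odot\trop_w(Q) = \trop_w(\mu Q)=\trop_w(\lambda Q')=\trop(\lambda)\odot\trop_w(Q')$; since $V\mathbb{B}[T]$ is integral (Proposition~\ref{Prop_MC}) this rearranges in $V\mathbb{B}(T)$ to $\trop(\lambda)^{-1}\odot\trop_w(Q)=\trop(\mu)^{-1}\odot\trop_w(Q')$. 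Note this already uses that $\trop_w$ on $K_{m,n}$ is multiplicative, which is exactly the content of item (2) of Theorem~\ref{VBT_theorem_extended}.

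The second step is to verify the three seminorm axioms of Definition~\ref{dfn:quasivaluation} together with multiplicativity. For the $K$-algebra normalization, a nonzero scalar $c\in K$ has $\lambda = 1$, $Q=c$, and $\trop_w(c)=\trop(c)=1$. For multiplicativity: given $P_1,P_2\in F_{m,n}$ with $\lambda_i P_i = Q_i$, we have $\lambda_1\lambda_2\,(P_1P_2) = Q_1Q_2$, so
\[
\trop_w(P_1P_2) = \trop(\lambda_1\lambda_2)^{-1}\odot\trop_w(Q_1Q_2) = \bigl(\trop(\lambda_1)^{-1}\odot\trop_w(Q_1)\bigr)\odot\bigl(\trop(\lambda_2)^{-1}\odot\trop_w(Q_2)\bigr),
\]
where the middle equality again invokes multiplicativity of $\trop_w$ and $\trop$ on $K_{m,n}$ and of multiplication in the semifield $V\mathbb{B}(T)$. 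For subadditivity $\trop_w(P_1+P_2)\leq \trop_w(P_1)\oplus\trop_w(P_2)$: choose a common denominator $\lambda = \lambda_1\lambda_2$ so that $\lambda(P_1+P_2) = \lambda_2 Q_1 + \lambda_1 Q_2$ in $K_{m,n}$, apply subadditivity of $\trop_w$ on $K_{m,n}$ (from $\text{in}_w^*$-decompositions / Corollary~\ref{coro_Longue_prop}), and then multiply through by $\trop(\lambda)^{-1}$, using that multiplication by a fixed element is order-preserving in the ordered semifield $V\mathbb{B}(T)$.

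The main obstacle is purely bookkeeping rather than conceptual: one must be careful that the order relation $\leq$ on $V\mathbb{B}(T)$ behaves well under multiplication by the invertible element $\trop(\lambda)$ — i.e.\ that $\trop(\lambda)\odot(-)$ is an order-isomorphism of $V\mathbb{B}(T)$ — so that inequalities established over $K_{m,n}$ descend correctly after dividing. This follows because $V\mathbb{B}[T]$ embeds in its semifield of fractions $V\mathbb{B}(T)$ (Proposition~\ref{Prop_MC} and the discussion after it), and in any idempotent semifield multiplication by a unit is an additive automorphism, hence order-preserving in both directions. With that lemma in hand, every step above is a one-line transport of the corresponding statement for $K[\![T]\!]$-coefficients through the decomposition \eqref{from_field_to_ring}.
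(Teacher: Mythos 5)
Your proposal is correct and follows essentially the same route as the paper: clear denominators via \eqref{from_field_to_ring}, observe that $\trop_w(P) = \trop_w(Q)/\trop(\lambda)$, and transport the seminorm axioms from Corollary~\ref{coro_Longue_prop} through this decomposition. The paper's proof is far terser, merely stating the decomposition and invoking Corollary~\ref{coro_Longue_prop}, whereas you explicitly verify well-definedness, multiplicativity, and subadditivity (and correctly note the order-preserving behavior of multiplication by units in $V\mathbb{B}(T)$); all of these checks are sound.
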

 \begin{proof}
 Given $P\in F_{m,n}$, write $P=\frac{Q}{\lambda}$ as in \eqref{from_field_to_ring}, so that $\trop_w(P)=\frac{\trop_w(Q)}{\trop(\lambda)}$; the desired result follows from Corollary \ref{coro_Longue_prop}, which establishes that $\trop_w:K[\![T]\!] [x_{i,J}: i,J]\ra V\mathbb{B}[T]$ is a multiplicative $K$-algebra  seminorm. 
 \end{proof}
 
 We propose the following generalization of \eqref{initial_modified} from $K_{m,n}$ to $F_{m,n}$. 
 
 \begin{dfn}
  \label{initial_field}
Given $P\in F_{m,n}$, write $P=\frac{Q}{\lambda}$ as in \eqref{from_field_to_ring}. The {\it initial form} $\text{in}_w^*(P)$ of $P$ at $w \in \mb{B}_m^n$ is
 $\text{in}_w^*(P):=\frac{\text{in}_w^*(Q)}{\overline{\lambda}}.$ 
 \end{dfn}
 
\begin{rem}
The notion of relevancy may be extended from $V\mathbb{B}[T]$ to $V\mathbb{B}(T)$ as follows: whenever $\frac{a}{b}\leq\frac{c}{d}$, we say that $\frac{a}{b}$ is relevant for $\frac{c}{d}$ provided that $a\odot d\nprec c\odot b$ in $V\mathbb{B}[T]$. 
Now say as before that $\lambda P=Q$, with $P=\sum_{M}\frac{a_M}{b_M}E_M$. Given a monomial $c_ME_M$ of $Q$, we have
$$\text{trop}_w(\frac{a_M\la}{b_M}E_M)=\text{trop}_w(c_ME_M)\nprec \text{trop}_w(Q)=\text{trop}(\lambda)\odot \text{trop}_w(P).$$
It follows from Remark \ref{properties_of_irrelevant} that $\text{in}_w^*(Q)$ (and hence $\text{in}_w^*(P)$) is supported on the set of $M$ for which $\text{trop}_w(\frac{a_M}{b_M}E_M)\nprec \text{trop}_w(P)$. 
\end{rem}
 
 We thus have a decomposition 
\begin{equation}\label{decomposition_relevant}
    \lambda P=Q=\text{in}_w^{\ast}(Q)+Q^{\ast}=\overline{\lambda}\cdot \text{in}_w^*(P)+P^{\pr}
\end{equation}
in which $\trop_w(b_NE_N)\prec \trop_w(\lambda P)$ for every monomial $b_NE_N$ of $P^{\pr}$. In fact, more is true; namely, $\trop_w(P^{\pr})\prec \trop_w(\lambda P)=\trop_w(Q)=\trop_w(\text{in}_w^{\ast}(Q))$. So again $P^{\pr}=Q-\text{in}_w^{\ast}(Q)$ is the  $w-$irrelevant part of $\lambda P$.

We close this preliminary subsection with some significant extensions of previous results. It is worth noting that in the extended version of the fundamental theorem \ref{EEFT}, the supports of solutions in $K[\![T]\!]^n$ for systems defined over the field $K(\!(T)\!)$ are controlled by the theory over $K[\![T]\!]$.

\begin{thm}
\label{Previous_EEFT}
 A differential ideal $G \subset F_{m,n}$ has a solution in $K_m ^n$ with support vector $w\in \mathbb{B}_m^n$ if and only if $\{\text{in}_w^*(P):P\in G\}$ contains no differential monomials.
 \end{thm}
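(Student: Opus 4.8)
The plan is to reduce the statement over $F_{m,n}=(K(\!(T)\!)[x_{i,J}],D)$ to the already-known fundamental theorem \ref{EFT} over $K_{m,n}=(K[\![T]\!][x_{i,J}],D)$, using the clearing-of-denominators decomposition \eqref{from_field_to_ring} and the multiplicativity of $\trop$ and $\trop_w$ established in Corollary~\ref{extension_trop} and the corollaries following it. First I would observe that, since $K(\!(T)\!)$ is the fraction field of $K[\![T]\!]$, any differential ideal $G\subset F_{m,n}$ has the property that $G\cap K_{m,n}$ is a differential ideal of $K_{m,n}$, and every $P\in G$ can be written $\lambda P=Q$ with $\lambda\in K[\![T]\!]\setminus\{0\}$ and $Q\in G\cap K_{m,n}$. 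Moreover, a tuple $\varphi\in K_m^n$ is a solution of $G$ if and only if it is a solution of $G\cap K_{m,n}$: the ``only if'' is clear, and the ``if'' follows because for $P\in G$ we have $\lambda P=Q\in G\cap K_{m,n}$, so $Q(\varphi)=0$, and since $\lambda$ has no zero divisors in the integral domain $K[\![T]\!]$ and $(\lambda P)(\varphi)=\lambda\cdot P(\varphi)$, we get $P(\varphi)=0$.

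Next I would match up the combinatorial conditions. By Definition~\ref{initial_field}, for $P\in G$ with $\lambda P=Q$ we have $\text{in}_w^*(P)=\frac{\text{in}_w^*(Q)}{\overline{\lambda}}$, where $\overline{\lambda}$ is the (monomial-free-checking-irrelevant) initial term of $\lambda\in K[\![T]\!]$. The key point is that $\text{in}_w^*(P)$ is a differential monomial (i.e., a scalar multiple of a single $E_M$) if and only if $\text{in}_w^*(Q)$ is, because dividing by the fixed nonzero element $\overline{\lambda}$ of $K[\![T]\!]$ does not change the number of monomials $E_M$ appearing — it only rescales the coefficients, which remain nonzero since $K(\!(T)\!)$ is a field. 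Hence $\{\text{in}_w^*(P):P\in G\}$ contains no differential monomials if and only if $\{\text{in}_w^*(Q):Q\in G\cap K_{m,n}\}$ contains no differential monomials, i.e., if and only if the initial ideal $\text{in}_w^*(G\cap K_{m,n})\subset K_{m,n}$ is monomial-free in the sense of Definition~\ref{initial_ideal_def}. (Here I should be slightly careful: $G\cap K_{m,n}$ need not equal $[\Sigma]$ for a finite $\Sigma$, but Theorem~\ref{EFT} applies to an arbitrary differential ideal $[\Sigma]$; since every differential ideal of $K_{m,n}$ is of the form $[\Sigma]$ for $\Sigma$ equal to the ideal itself, this is harmless.)

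Putting the two reductions together: $G$ has a solution $\varphi\in K_m^n$ with support vector $\text{sp}(\varphi)=w$ $\iff$ $G\cap K_{m,n}$ has such a solution $\iff$ (by the equivalence of items (2) and (3) in Theorem~\ref{EFT} applied to the differential ideal $G\cap K_{m,n}$) the initial ideal $\text{in}_w^*(G\cap K_{m,n})$ is monomial-free $\iff$ $\{\text{in}_w^*(P):P\in G\}$ contains no differential monomials. I expect the main obstacle to be the careful verification that passing between $P\in G$ and $Q=\lambda P\in G\cap K_{m,n}$ genuinely preserves the relevant data on the nose — specifically, that $\trop_w(P^{\pr})\prec\trop_w(\lambda P)$ in the decomposition \eqref{decomposition_relevant} really does guarantee that the $w$-relevant supports of $P$ and of $Q$ consist of the same set of indices $M$, so that $\text{in}_w^*(P)$ and $\text{in}_w^*(Q)$ have the same monomial support; this is exactly what the Remark preceding \eqref{decomposition_relevant} asserts, via the multiplicativity $\trop_w(Q)=\trop(\lambda)\odot\trop_w(P)$ and the compatibility of $\prec$ with $\odot$ by a nonzero element (Remark~\ref{properties_of_irrelevant}(4)--(5)), so the work is in assembling these pieces rather than proving anything new. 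A secondary point to handle cleanly is that ``support vector $w$'' means $\text{sp}(\varphi)=w$ coordinatewise, so the equivalence with item (3) of Theorem~\ref{EFT} is literally the statement $w\in\text{sp}(\text{Sol}(G\cap K_{m,n}))$, which requires no further argument.
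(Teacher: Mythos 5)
Your proof is correct and follows essentially the same route as the paper's: set $G_0 = G\cap K_{m,n}$, observe via \eqref{from_field_to_ring} that $\text{in}_w^*(Q)=\overline{\lambda}\cdot\text{in}_w^*(P)$ (so $\text{in}_w^*(P)$ is a monomial iff $\text{in}_w^*(Q)$ is), note that $\text{Sol}(G)=\text{Sol}(G_0)$ inside $K_m^n$, and invoke item (2)$\iff$(3) of Theorem~\ref{EFT} for $G_0$. One small point worth flagging in both your write-up and the paper's: Theorem~\ref{EFT}(2) is phrased in terms of the \emph{ideal} $\text{in}_w^*(G_0)$ being monomial-free, while Theorem~\ref{Previous_EEFT} is about the \emph{set} $\{\text{in}_w^*(P):P\in G\}$ containing no monomials; your ``i.e.'' identifying these two conditions silently uses the (nontrivial) fact that over $K_{m,n}$ the set of initial forms of a differential ideal already contains a monomial whenever the ideal it generates does — this is the $K_{m,n}$-analogue of what Theorem~\ref{EEFT} later proves over $F_{m,n}$, and is implicitly being imported from \cite{FGH20+}. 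The paper makes the same jump, so this is not a defect relative to the source, but in a standalone exposition it would deserve an explicit citation.
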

 
 \begin{proof}
 The proof imitates that of \cite[Cor. 1]{HG19}, 
 as follows.
 Let $G_0=G\cap K_{m,n}$; then for any $P\in G$ equation \eqref{from_field_to_ring} yields $Q=\lambda P\in G_0$, and  $\text{in}_w^*(Q)=\overline{\lambda} \cdot \text{in}_w^*(P)$ by definition. On the other hand, clearly $\varphi\in K_m ^n$ is a solution of $G$ if and only if it is a solution of $G_0$; the desired result now follows from part 2 of Theorem~\ref{EFT}.
 \end{proof}
 
  \begin{dfn}\label{initial_ideal_ext_def} 
{Given a differential ideal $G \subset F_{m,n}$ and $w\in \mathbb{B}_m^n$, the \emph{initial ideal} $\text{in}_w^*(G)$ of $G$ with respect to $w$ is the algebraic ideal $(\text{in}_w^*(P):P\in G)$ of $F_{m,n}$.}
 \end{dfn}

 \begin{thm}[Fundamental theorem of tropical differential algebraic geometry over $F_m$] 
\label{EEFT}
Fix $m,n\geq1$, let $K$ be an uncountable, algebraically closed field of characteristic zero, and let $\Sigma\subset F_{m,n}$ be a family of  differential polynomials with associated differential ideal $[\Sigma]$ and solution set $\text{Sol}(\Sigma)=\{\varphi\in K_m ^n:P(\varphi)=0 \text{ for all }P\in\Sigma\}$. The following subsets of $\mathbb{B}_m ^n$ coincide:
\begin{enumerate}
\item the set $\{w\in \mathbb{B}_m ^n:\text{in}_w^*([\Sigma])\text{ is monomial-free}\}$ as in Definition \ref{initial_ideal_ext_def}; and
\item the set $\text{sp}(\text{Sol}(\Sigma))=\{\text{sp}(\varphi):\varphi\in\text{Sol}(\Sigma)\}$ of coordinatewise supports of points in   $\text{Sol}(\Sigma)$.
\end{enumerate}
\end{thm}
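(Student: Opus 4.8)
The plan is to deduce Theorem~\ref{EEFT} from the ring-level fundamental theorem (Theorem~\ref{EFT}) by the same ``clear denominators'' device already used in Theorem~\ref{Previous_EEFT}, applied now at the level of the whole differential ideal rather than a single polynomial. The first step is to introduce $G_0 := [\Sigma] \cap K_{m,n}$ and observe that it is a differential ideal of $K_{m,n}$: it is closed under the derivations because $[\Sigma]$ is, and because $D$ preserves $K_{m,n}$ inside $F_{m,n}$. The essential reduction is that $G_0$ and $[\Sigma]$ have the same solution set in $K_m^n$. The inclusion $\mathrm{Sol}([\Sigma]) \subseteq \mathrm{Sol}(G_0)$ is immediate; for the reverse, given $P \in [\Sigma]$ write $\lambda P = Q$ as in \eqref{from_field_to_ring} with $\lambda \in K[\![T]\!]\setminus\{0\}$ and $Q \in K_{m,n}$, and note $Q \in G_0$; then for $\varphi \in \mathrm{Sol}(G_0)$ we have $Q(\varphi) = 0$, hence $\lambda(\varphi)\cdot P(\varphi) = (\lambda P)(\varphi) = Q(\varphi) = 0$ in $K[\![T]\!]$, and since $K[\![T]\!]$ is an integral domain and $\lambda \neq 0$ we conclude $P(\varphi) = 0$. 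Therefore $\mathrm{Sol}(\Sigma) = \mathrm{Sol}([\Sigma]) = \mathrm{Sol}(G_0)$, so the set in item (2) of Theorem~\ref{EEFT} equals $\mathrm{sp}(\mathrm{Sol}(G_0))$.

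The second step is to match the two notions of ``monomial-free initial ideal''. I would show that for a fixed $w \in \mathbb{B}_m^n$, the initial ideal $\mathrm{in}_w^*([\Sigma]) \subset F_{m,n}$ contains a differential monomial if and only if the initial ideal $\mathrm{in}_w^*(G_0) \subset K_{m,n}$ (in the sense of Definition~\ref{initial_ideal_def}) does. Here the key bookkeeping fact, recorded after \eqref{decomposition_relevant}, is that for $P \in F_{m,n}$ with $\lambda P = Q$ one has $\mathrm{in}_w^*(Q) = \overline{\lambda}\cdot\mathrm{in}_w^*(P)$; since $\overline{\lambda}$ is a monomial (an element of $K[\![T]\!]$ whose $\mathrm{trop}$ is a monomial of $V\mathbb{B}[T]$), multiplication by $\overline{\lambda}$ and by $\overline{\lambda}^{-1}$ sets up a bijection between differential monomials appearing in $\mathrm{in}_w^*(G_0)$ and those appearing in $\mathrm{in}_w^*([\Sigma])$ up to invertible scalar monomial factors. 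Concretely: if $\mathrm{in}_w^*([\Sigma])$ contains a monomial $c\,E$ with $c \in K(\!(T)\!)$, then, writing $c\,E$ as an $F_{m,n}$-combination of initial forms $\mathrm{in}_w^*(P_j)$ and clearing a common denominator $\mu$, one finds that $\overline{\mu}\cdot c\,E$ lies in $\mathrm{in}_w^*(G_0)$ and is still a differential monomial; conversely a monomial in $\mathrm{in}_w^*(G_0)$ is already a monomial in $\mathrm{in}_w^*([\Sigma])$ since $G_0 \subset [\Sigma]$ and the initial-form operator is unchanged. Thus ``$\mathrm{in}_w^*([\Sigma])$ is monomial-free'' is equivalent to ``$\mathrm{in}_w^*(G_0)$ is monomial-free''.

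The third step is simply to invoke Theorem~\ref{EFT} for the differential ideal $G_0 \subset K_{m,n}$: its part (2) says $\{w : \mathrm{in}_w^*(G_0)\text{ monomial-free}\}$ equals $\mathrm{Sol}(\mathrm{sp}(G_0))$ (equivalently $\mathrm{Sol}(\mathrm{sp}([\Sigma_0]))$ for any generating system $\Sigma_0$ of $G_0$), and its part (3) identifies this with $\mathrm{sp}(\mathrm{Sol}(G_0))$. Chaining the equalities from the first two steps — item (1) of Theorem~\ref{EEFT} $=\{w : \mathrm{in}_w^*(G_0)\text{ monomial-free}\} = \mathrm{sp}(\mathrm{Sol}(G_0)) = \mathrm{sp}(\mathrm{Sol}(\Sigma)) =$ item (2) of Theorem~\ref{EEFT} — completes the proof.

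The main obstacle is Step~2: one must be careful that clearing denominators inside the generated ideal $\mathrm{in}_w^*([\Sigma]) = (\mathrm{in}_w^*(P) : P \in [\Sigma])$ does not create or destroy monomials. The subtle point is that an element of $\mathrm{in}_w^*([\Sigma])$ is an $F_{m,n}$-linear combination $\sum_j h_j\,\mathrm{in}_w^*(P_j)$, and it is not obvious a priori that such a combination being a single differential monomial forces a corresponding monomial to appear in $\mathrm{in}_w^*(G_0)$. The fix is to first observe that $\mathrm{in}_w^*([\Sigma])$ is generated over $F_{m,n}$ by $\{\mathrm{in}_w^*(Q) : Q \in G_0\}$ — because every $P \in [\Sigma]$ satisfies $\mathrm{in}_w^*(P) = \overline{\lambda}^{-1}\mathrm{in}_w^*(\lambda P)$ with $\lambda P \in G_0$ and $\overline{\lambda}$ a unit in $F_{m,n}$ — so that $\mathrm{in}_w^*([\Sigma]) = \mathrm{in}_w^*(G_0)\cdot F_{m,n}$, the extension of the $K_{m,n}$-ideal $\mathrm{in}_w^*(G_0)$ to $F_{m,n}$. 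One then uses the standard fact that extending an ideal from $K_{m,n}$ to the localization $F_{m,n}$ (obtained by inverting all nonzero elements of $K[\![T]\!]$) does not change whether the ideal contains a differential monomial, since such monomials have no ``denominator'' content to clear and a monomial lies in the extension exactly when a scalar multiple of it already lies in $\mathrm{in}_w^*(G_0)$. I would spell this localization argument out carefully, as it is the one place where the passage from field to ring is genuinely doing work rather than being a formality.
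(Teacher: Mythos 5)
Your proposal is correct, and it takes a genuinely different route through the key step than the paper does. Both proofs share the skeleton: introduce $G_0 := [\Sigma] \cap K_{m,n}$, observe $\text{Sol}(\Sigma) = \text{Sol}(G_0)$ (this reduction is exactly what the paper's Theorem~\ref{Previous_EEFT} encapsulates), and then leverage Theorem~\ref{EFT}. Where the two diverge is in handling the passage from the ideal $\text{in}_w^*([\Sigma]) \subset F_{m,n}$ to something Theorem~\ref{EFT} can speak to. The paper's proof works inside $F_{m,n}$ and carries out an explicit construction: given a differential monomial $E = \sum_i Z_i\, \text{in}_w^*(P_i)$ in $\text{in}_w^*(G)$, it clears denominators to build a specific $P = \sum_i (cS_i)\lambda_i P_i \in G$ and shows, by a careful expansion of $\trop_w(P)$ and cancellation of cross-terms, that $\text{in}_w^*(P) = \overline{c}E$ is itself a scalar multiple of $E$. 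This proves the stronger statement that a monomial in the ideal of initial forms forces a monomial in the \emph{set} $\{\text{in}_w^*(P):P\in G\}$, which then plugs into Theorem~\ref{Previous_EEFT}. Your argument instead sidesteps this bookkeeping entirely: you observe that $\text{in}_w^*([\Sigma])$ is precisely the extension $S^{-1}\text{in}_w^*(G_0)$ of the $K_{m,n}$-ideal $\text{in}_w^*(G_0)$ to the localization $F_{m,n} = S^{-1}K_{m,n}$ (with $S = K[\![T]\!]\setminus\{0\}$), because every $\text{in}_w^*(P)$ is a unit multiple of some $\text{in}_w^*(Q)$ with $Q \in G_0$. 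Since $S^{-1}I = \{p/s : p\in I,\ s\in S\}$ and a monomial $dE \in S^{-1}I$ forces $sdE \in I$ with $sd \in K[\![T]\!]$, the localization neither creates nor destroys monomials, so $\text{in}_w^*([\Sigma])$ is monomial-free iff $\text{in}_w^*(G_0)$ is, and one concludes directly from Theorem~\ref{EFT}(2)$\Leftrightarrow$(3). Your route is cleaner in that it replaces the paper's delicate initial-form computation with a structural observation about ideal extension under localization, at the (mild) cost of proving the less explicit statement (no particular $P\in G$ with monomial initial form is exhibited).

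One small imprecision worth flagging: you parenthetically assert that $\overline{\lambda}$ is a monomial, ``an element of $K[\![T]\!]$ whose $\trop$ is a monomial of $V\mathbb{B}[T]$.'' This holds only when $m=1$. For $m>1$ the vertex part $\overline{\lambda} = \sum_{I\in\text{Vert}(\text{Supp}(\lambda))}\lambda_I T^I$ from \eqref{New_decomposition} is in general a genuine polynomial with several terms. Fortunately your argument never actually uses that $\overline{\lambda}$ is a monomial; all that is needed is that $\overline{\lambda}$ is a nonzero element of $K[\![T]\!]$ (hence a unit of $F_{m,n}$), and the ``fix'' paragraph correctly reduces everything to the localization statement, which does not depend on the shape of $\overline{\lambda}$.
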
 

\begin{proof}
 The proof is similar to that of \cite[Lem. 3]{HG19}. Letting $G=[\Sigma]$, it suffices to show that if $\text{in}_w^*(G)$ contains a  monomial, then $\{\text{in}_w^*(P):\:P\in G\}$ also contains a  monomial (as the converse is clear). We then conclude by applying Theorem~\ref{Previous_EEFT}.
 
 Accordingly, say that $E=\sum_iZ_i\text{in}_w^*(P_i)$, where $P_i\in G$ and $Z_i\in F_{m,n}$ satisfy $\lambda_iP_i=R_i$ and $\mu_iZ_i=S_i$ for some $R_i,S_i\in K_{m,n}$ and $\lambda_i,\mu_i\in K[\![T]\!]$. According to Definition~\ref{initial_field}, we have $\text{in}_w^*(P_i)=\frac{\text{in}_w^*(R_i)}{\overline{\lambda_i}}$; and thus $cE=\sum_i(c_iS_i)\text{in}_w^*(R_i)$, where $c=\prod_j\mu_j\overline{\lambda_j}$ and $c_i=\prod_{j\neq i}\mu_j\overline{\lambda_j}$.
 
 Let $P=\sum_i(cS_i)R_i=\sum_i(cS_i)\lambda_iP_i\in G$; we will show that $\overline{c}E=\text{in}_w^*(P)$. Indeed, for each index $i$, write $c_iS_i=\sum_jb_{N(ij)}E_{N(ij)}$ and $R_i=\sum_ka_{M(ik)}E_{M(ik)}$, so that $P=\sum_i\biggl[\sum_{j,k}b_{N(ij)}a_{M(ik)}E_{N(ij)}E_{M(ik)}\biggr]$ and
\[
     \trop_w(P):=\bigoplus_i\biggl[\bigoplus_{j,k}\trop_w(b_{N(ij)}a_{M(ik)}E_{N(ij)}E_{M(ik)})\biggr].
\]

Further write $a_{M(ik)}=\overline{a_{M(ik)}}+a_{M(ik)}^*$ if $M(ik)$ is in the support of $\text{in}_w^*(R_i)$, and $a_{M(ik)}=a_{M(ik)}^*$ otherwise.  For every $i$ we have $\trop_w(R_i^*)\prec \trop_w(R_i)=\trop_w(\text{in}_w^*(R_i))$, so  $\trop_w(cS_iR_i^*)\prec \trop_w(cS_iR_i)=\trop_w(cS_i\text{in}_w^*(R_i))$ by Remark~\ref{properties_of_irrelevant}.  Hence 
 $$\bigoplus_{j,k}\trop_w(b_{N(ij)}a_{M(ik)}E_{N(ij)}E_{M(ik)})=\bigoplus_{j,k}\trop_w(b_{N(ij)}\overline{a_{M(ik)}}E_{N(ij)}E_{M(ik)})$$
 and summing over $i$ yields
 $$\trop_w(P):=\bigoplus_i\biggl[\bigoplus_{j,k}\trop_w(b_{N(ij)}\overline{a_{M(ik)}}E_{N(ij)}E_{M(ik)})\biggr].$$
 
 Thus the only monomials of $P$ in (the support of) $\text{in}_w^*(P)$ are those in 
 $cE=\sum_i(c_iS_i)\text{in}_w^*(R_i)$. Suppose $\text{trop}_w(b_{N(ij)}\overline{a_{M(ik)}}E_{N(ij)}E_{M(ik)})\nprec \text{in}_w^*(P)$ for one of these. Then either i) $E_{N(ij)}E_{M(ik)}=E$ or ii) $E_{N(ij)}E_{M(ik)}\neq E$,
 where the sum over all the monomials of the second type is zero by hypothesis. 
 In the second case, we conclude that
 $-b_{N(ij)}\overline{a_{M(ik)}}E_{N(ij)}E_{M(ik)}$ also appears in $cE$, and their initial forms cancel.  Thus $\text{in}_w^*(P)=\overline{c}E$, and our proof is complete.
 \end{proof}
 
 Given a differential ideal $I\subset F_{m,n}$ and a weight vector $w\in\mathbb{B}_m^n$, a set $\mathcal{G}\subset I$ is a {\it tropical basis} for $I$ with respect to $w$ provided $\text{in}_w^*(I)=(\text{in}_w^*(\Theta(J)(g)):\:g\in\mathcal{G},J\in \mathbb{N}^m)$. The following result generalizes \cite[Prop. 1]{HG19}; our strategy of proof is slightly different.

 \begin{prop}
 \label{prop_local_trop_basis}
 Let $\omega_m\in \mathbb{B}_m^n$ denote the unique series vector with support $\mathbb{N}^m$. Given $G\subset K_{0,n}$, let $I=[G]$ denote the differential ideal of $F_{m,n}$ that it generates. Then $G$ is a tropical basis for $I$ with respect to $\omega_m$.
 \end{prop}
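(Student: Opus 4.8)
The statement asserts that for a differential ideal $I=[G]$ of $F_{m,n}$ generated by a set $G$ of \emph{ordinary} polynomials (i.e.\ elements of $K_{0,n}\subset K_{m,n}\subset F_{m,n}$, so their coefficients lie in $K$), $G$ itself is a tropical basis for $I$ with respect to the constant weight vector $\omega_m$ whose coordinates all have support $\mathbb{N}^m$. Unwinding Definition of tropical basis, we must show
\[
\text{in}_{\omega_m}^*(I)=\bigl(\text{in}_{\omega_m}^*(\Theta(J)(g))\::\:g\in G,\ J\in\mathbb{N}^m\bigr).
\]
The inclusion $\supseteq$ is automatic, since each $\Theta(J)(g)\in I$. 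For $\subseteq$, a general element of $I=[G]$ is a finite sum $P=\sum_i Z_i\,\Theta(J_i)(g_i)$ with $Z_i\in F_{m,n}$ and $g_i\in G$; one needs to show that $\text{in}_{\omega_m}^*(P)$ lies in the ideal on the right.

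\textbf{Key steps.} First I would isolate what the weight $\omega_m$ does: since every $w_k$ has support all of $\mathbb{N}^m$, for any differential monomial $E=E_M$ one computes $V(E_M(\omega_m))=V\bigl(\prod(\Theta(J)\omega_k)^{m_{k,J}}\bigr)$, and since $\Theta(J)\omega_k=\omega_1$ (support $\mathbb{N}^m$) for every $J$, this product has support $\mathbb{N}^m$ too, so $V(E_M(\omega_m))=1$ in $V\mathbb{B}[T]$. Hence by \eqref{trop_of_diff_monomial}, $\trop_{\omega_m}(a_ME_M)=\trop(a_M)\odot 1=\trop(a_M)$, and $\trop_{\omega_m}(P)=\bigoplus_M\trop(a_M)$ depends only on the coefficients. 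Moreover, because $g\in K_{0,n}$ has constant ($K$-)coefficients, every coefficient $a$ of $g$ satisfies $\trop(a)=1$ (the monomial $T^0$), so the initial form $\text{in}_{\omega_m}^*(g)$ is simply the sum of those monomials $E_M$ of $g$ whose $\trop_{\omega_m}$-value, namely $1$, is $\nprec\trop_{\omega_m}(g)=1$; since $1\nprec 1$, this is \emph{all} of $g$: $\text{in}_{\omega_m}^*(g)=g$. More importantly, for $\Theta(J)(g)$, which still has coefficients in $\mathbb{N}$ (products of natural numbers coming from differentiation of $T$-powers inside $x_{k,J}$'s — but here the $g$ have no $T$'s, so $\Theta(J)$ acts purely by the shift $x_{k,J'}\mapsto x_{k,J'+J}$ on the $x$-variables), one checks $\Theta(J)(g)$ again has coefficients in $K$, hence again $\text{in}_{\omega_m}^*(\Theta(J)(g))=\Theta(J)(g)$. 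Therefore the right-hand ideal is just $I$ itself (algebraically generated by $\{\Theta(J)(g)\}$, which is exactly $[G]=I$), and the claim reduces to: $\text{in}_{\omega_m}^*(I)\subseteq I$.

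\textbf{Finishing.} For the last inclusion I would mimic the argument in the proof of Theorem~\ref{EEFT}: given $P=\sum_i Z_i\,\Theta(J_i)(g_i)\in I$, clear denominators via \eqref{from_field_to_ring} to reduce to $Z_i\in K_{m,n}$ (and a global $\lambda\in K[\![T]\!]\setminus\{0\}$, whose $\trop$ is a monomial and can be tracked separately). Expanding $Z_i=\sum_j b_{N(ij)}E_{N(ij)}$ and using the linearity of $\trop_{\omega_m}$ together with the weight-independence established above, I would show that for each term of $P$ the value $\trop_{\omega_m}(b_{N(ij)}a_{M(ik)}E_{N(ij)}E_{M(ik)})$ depends only on $\trop(b_{N(ij)})$ (since $\trop(a_{M(ik)})=1$), so replacing each $a_{M(ik)}$ by its initial term $\overline{a_{M(ik)}}=a_{M(ik)}$ changes nothing; hence $\text{in}_{\omega_m}^*(P)=\sum_i \overline{Z_i}\cdot\Theta(J_i)(g_i)$ modulo the cancellation bookkeeping, which visibly lies in the algebraic ideal generated by $\{\Theta(J)(g):g\in G,J\in\mathbb{N}^m\}$. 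The one subtlety — and the \textbf{main obstacle} — is the cancellation phenomenon flagged at the end of the proof of Theorem~\ref{EEFT}: when forming $\text{in}_{\omega_m}^*$ of the sum $P$, monomials $E_{N(ij)}E_{M(ik)}$ coming from different $i$ may coincide and their initial forms may cancel, potentially dropping the $\trop_{\omega_m}$-value of $P$ below what a naive term-by-term estimate predicts. Handling this requires the same grouping-by-monomial argument as in Theorem~\ref{EEFT}: collect all contributions to a fixed differential monomial, observe that the surviving (relevant) part of $P$ consists precisely of monomials already present in $\sum_i \overline{Z_i}\,\Theta(J_i)(g_i)$, and that any monomial whose coefficient would otherwise be irrelevant either cancels identically or contributes a term still lying in the target ideal. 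Since here the $\Theta(J)(g)$ have coefficients $1$ in $\trop$, the relevancy relation $\prec$ is easier to control than in the general case, so I expect this bookkeeping to go through cleanly, and the proof to be genuinely shorter than that of Theorem~\ref{EEFT}.
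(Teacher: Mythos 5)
Your proposal tracks the paper's argument closely: the same two key observations drive both, namely that $V(E_M(\omega_m))=1$ for every differential monomial $E_M$ (so $\trop_{\omega_m}$ depends only on coefficients), and that $\Theta(J)(g)$ has coefficients in $K$ for $g\in K_{0,n}$ (so $\text{in}_{\omega_m}^*(\Theta(J)(g))=\Theta(J)(g)$). The paper then writes $\lambda f=f_1=\sum_i a_ih_i$ with $a_i\in K_{m,n}$ and $h_i\in\mathcal{G}$, notes that every monomial of $h_i$ has $\trop_{\omega_m}$-value $1$, deduces $\text{in}_{\omega_m}^*(a_ih_i)=\text{in}_{\omega_m}^*(a_i)h_i$, and asserts $\text{in}_{\omega_m}^*(f_1)=\sum_i\text{in}_{\omega_m}^*(a_i)h_i$ over ``relevant'' indices — which is exactly your $\sum_i \text{in}_{\omega_m}^*(Z_i)\cdot\Theta(J_i)(g_i)$ once you correct $\overline{Z_i}$ (which is not defined for a differential polynomial) to $\text{in}_{\omega_m}^*(Z_i)$. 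So the approach is genuinely the same; you are not proposing a different route.

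One remark worth making: you explicitly flag the cancellation obstacle (different terms $a_ih_i$ whose leading parts cancel, dropping $\trop_{\omega_m}(f_1)$ below $\bigoplus_i\trop_{\omega_m}(a_i)$) and say you would handle it as in Theorem~\ref{EEFT}. The paper's proof glides past this point by simply restricting the sum to ``relevant indices'' without arguing why those survivors cannot themselves cancel against each other; your version is therefore slightly more candid about where the real work lies. If you do carry out the bookkeeping, the cleanest route is probably to observe that when cancellation occurs one can replace the representation $f_1=\sum_i a_ih_i$ by a more economical one (absorbing cancelling leading parts) before taking initial forms, rather than trying to fix the naive term-by-term formula.
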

 
 \begin{proof}
 Set $\mathcal{G}:=\{\Theta(J)(g):\:g\in G,\: J\in\mathbb{N}^m\}$ and $w:=\omega_m$. As $\mathbb{N}^m\subset \text{Supp}(w)$ and $G\subset K_{0,n}$, we have $\text{in}_{w}^*(h)=h$ for every $h\in\mathcal{G}$.
 On the other hand, given any $f\in I$, there is some polynomial $f_1\in K_{m,n}$ and $\lambda\in K$ for which $\lambda f=f_1=\sum_ia_ih_i$, where $a_i\in K_{m,n}$ and $h_i\in\mathcal{G}$.
 
 We will compute $\text{in}_w^*(f_1)$. To this end, we decompose $a_i$ as $a_i=\text{in}_w^*(a_i)+a^{\pr}_i$, so that $a_ih_i=\text{in}_w^*(a_i)h_i+a^{\pr}_i$. For every monomial $b_NE_M$ of $h_i$, we have $v_w(h_i)=v_w(b_NE_M)=1$. It follows that $\text{in}_w^*(a_ih_i)=\text{in}_w^*(a_i)h_i$, and consequently that
 \[
 \text{in}_w^*(f_1)=\sum_i\text{in}_w^*(a_i)h_i \text{ and } \text{in}_w^*(f_1)=\frac{\text{in}_w^*(f_1)}{\overline{\lambda}}
 \]
 where the sum is over all relevant indices $i$, which means precisely that $G$ is a tropical basis for $I$ with respect to $w$.
 \end{proof}
 
 \subsection{Initial degenerations}\label{initial_degenerations}
The geometric counterparts of initial forms are initial {\it degenerations}. Each of these is specified by a scheme-theoretic {\it model} that is flat over the spectrum of the {\it valuation ring} $K(\!(T)\!)^\circ=\{x\in K(\!(T)\!):\trop(x)\leq1\}$ corresponding to the valuation $\trop:K(\!(T)\!)\to V\mathbb{B}(T)$, as in \cite{gub}.  

Given an affine scheme $X$ defined by an ideal $I\subset K(\!(T)\!)[x_{i,J}:i,J]$, we now let $\mathcal{X}$ denote the spectrum of a quotient ring $K(\!(T)\!)^\circ[x_{i,J}:i,J]/I^{\pr}$, where $I^{\pr}$ is a translation of $I$ induced by a weight vector $w\in\mathbb{B}_m ^n$: this is our putative model. We will show that, for every $w\in\mathbb{B}_m^n$, an analogue of the translation map $K(\!(T)\!) [x_{i,J}:i,J] \ra K(\!(T)\!) ^\circ[x_{i,J}:i,J]$ 
defined when $m=1$ in \cite{FT20} also exists when $m>1$. On the other hand, we will also show that when $m>1$, there is a tower of ring extensions $K[\![T]\!] \subsetneq K(\!(T)\!)^\circ\subset K(\!(T)\!)$ which diverges from the $m=1$ case. In particular, $K(\!(T)\!)^\circ$ is not a local ring, and therefore is not a valuation ring in the traditional sense; nor is it closed under taking partial derivatives.

 \begin{coro}
 Let $\mathcal{C}:=\{x\in K(\!(T)\!):\frac{\partial x}{\partial t_j}=0 \text{ for every } j\}$. Then $\mc{C}=K$, while 
 $K(\!(T)\!)^{\circ}$ is a subring of $K(\!(T)\!)$.
 \end{coro}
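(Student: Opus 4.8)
The plan is to establish the two assertions of the corollary separately, both being essentially bookkeeping consequences of material already in hand. The statement that $\mathcal{C}=K$ asks us to identify the field of constants of the differential field $K(\!(T)\!)$ under the derivations $D=\{\partial/\partial t_1,\ldots,\partial/\partial t_m\}$.

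First I would prove $\mathcal{C}=K$. The inclusion $K\subseteq\mathcal{C}$ is immediate since scalars in $K$ are annihilated by each $\partial/\partial t_j$ by \eqref{rule_deriv}. For the reverse inclusion, take $x=f/g\in K(\!(T)\!)$ with $f,g\in K[\![T]\!]$, $g\neq0$, and suppose $\partial x/\partial t_j=0$ for every $j$. By the quotient rule this forces $g\cdot\partial f/\partial t_j=f\cdot\partial g/\partial t_j$ for each $j$. Writing $f$ and $g$ in terms of their lowest-order homogeneous parts and comparing, or more cleanly observing that $K[\![T]\!]$ itself has constant field $K$ (a standard fact: if $\partial h/\partial t_j=0$ for all $j$ then $h$ has no terms of positive degree in any $t_j$, hence $h\in K$) and then noting that $\partial x/\partial t_j=0$ together with $x=f/g$ means $x$ satisfies $xg=f$, so applying $\partial/\partial t_j$ gives $x\,\partial g/\partial t_j=\partial f/\partial t_j$; iterating, one sees $f$ and $g$ are proportional over $K$ up to a common factor, so $x\in K$. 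The essential point is that $K[\![T]\!]\subset K(\!(T)\!)$ induces an isomorphism on constant fields, which is elementary.

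Next I would show $K(\!(T)\!)^{\circ}$ is a subring of $K(\!(T)\!)$. By definition $K(\!(T)\!)^{\circ}=\{x\in K(\!(T)\!):\trop(x)\leq1\}$, where $\trop:K(\!(T)\!)\to V\mathbb{B}(T)$ is the valuation of Corollary~\ref{extension_trop}. It contains $0$ (with $\trop(0)=0\leq1$) and $1$ (with $\trop(1)=1\leq1$). Closure under multiplication follows from multiplicativity of $\trop$ and the fact that $\leq$ is an algebraic order relation on the idempotent semiring $V\mathbb{B}(T)$: if $\trop(x)\leq1$ and $\trop(y)\leq1$, then $\trop(xy)=\trop(x)\odot\trop(y)\leq 1\odot 1=1$. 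Closure under addition follows from the seminorm inequality $\trop(x+y)\leq\trop(x)\oplus\trop(y)$ together with $\trop(x)\oplus\trop(y)\leq 1\oplus1=1$, using again that $\leq$ is compatible with $\oplus$. Since $K(\!(T)\!)^{\circ}$ is also closed under negation (as $\trop(-x)=\trop(x)$ because $\trop$ is a $K$-algebra valuation and $-1\in K$ has $\trop(-1)=1$), it is a subring.

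The main obstacle, such as it is, lies in the constant-field computation: one must be slightly careful passing from $K[\![T]\!]$ to its fraction field, since a priori a quotient of non-constant power series could conceivably be a constant. The resolution is that if $x=f/g$ with $\partial x/\partial t_j=0$ for all $j$, then differentiating $xg=f$ shows $x$ is a constant ratio, which by the power-series case lands in $K$; this is routine but worth stating cleanly. The subring assertion, by contrast, is a formal consequence of $\trop$ being a $K$-algebra valuation into an idempotent semiring with its canonical order, exactly parallel to the classical fact that the set of elements of norm $\leq1$ under a non-archimedean seminorm forms a subring.
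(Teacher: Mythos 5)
Your treatment of the subring assertion matches the paper's, which observes that $V\mathbb{B}(T)^\circ := \{a \in V\mathbb{B}(T) : a \le 1\}$ is a subsemiring and pulls this back along the valuation $\trop$ of Corollary~\ref{extension_trop}; you are simply making explicit the computations (closure under sum via subadditivity of $\trop$, under product via multiplicativity, under negation since $\trop(-1)=1$) that the paper leaves implicit. For the constant-field claim the paper only says ``the first claim is clear,'' so there is no argument to compare against, but your sketch has a soft spot worth naming: the step ``iterating, one sees $f$ and $g$ are proportional over $K$ up to a common factor'' is not actually an argument, and from $x\,\partial_j g=\partial_j f$ alone the conclusion does not follow. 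A clean way to close the gap---and what the word ``elementary'' is concealing when $m>1$---is to use that $K[\![T]\!]$ is a UFD (being a regular local ring): write $x=f/g$ with $\gcd(f,g)=1$, so that $g\,\partial_j f=f\,\partial_j g$ forces $g\mid\partial_j g$ for every $j$; if $g$ were a non-unit of order $d\ge1$, then since $\mathrm{char}\,K=0$ some $\partial_j$ applied to the lowest-degree homogeneous part $g_d$ would be nonzero, giving $\mathrm{ord}(\partial_j g)=d-1<d=\mathrm{ord}(g)$ and contradicting $g\mid\partial_j g$; hence $g$ is a unit, $x\in K[\![T]\!]$, and the power-series case (which you correctly invoke) gives $x\in K$.
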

 
 \begin{proof}
 The first claim is clear; the second follows from the fact that $V\mathbb{B}(T)^{\circ}:=\{a\in V\mathbb{B}(T): a\leq1\}$ is a subsemiring of $V\mathbb{B}(T)$, together with Corollary~\ref{extension_trop}.
 \end{proof}
 
 \begin{dfn}
   {We call $K(\!(T)\!) ^{\circ}$ (resp., $V\mathbb{B}(T)^{\circ}$) the {\it ring of integers} of $K(\!(T)\!) $ (resp., the {\it semiring of integers} of $V\mathbb{B}(T)$).}
  \end{dfn}
 
  \begin{prop}
  \label{ring_of_integers}
  The ring $K(\!(T)\!)^{\circ}$ is an extension of $K[\![T]\!]$, and it is local and differential if and only if $m=1$.
  \end{prop}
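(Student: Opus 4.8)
The plan is to establish three assertions: that $K[\![T]\!]\subseteq K(\!(T)\!)^\circ$ is an extension of rings, that $K(\!(T)\!)^\circ$ is local when $m=1$, and that $K(\!(T)\!)^\circ$ is neither local nor differential when $m>1$. The first assertion is a matter of translating $\trop(f)\leq 1$ back to a condition on $f$: by definition $\trop(f)\le 1$ in $V\mathbb{B}(T)$ means, after writing $f = \frac{g}{\lambda}$ with $g,\lambda\in K[\![T]\!]$ via \eqref{from_field_to_ring}, that $\trop(g)\le\trop(\lambda)$ in $V\mathbb{B}[T]$. By Lemma~\ref{order_relation} this says $\mathrm{Supp}(V(\trop(g)))\subseteq \widetilde{\mathrm{Supp}(\lambda)}=\mathrm{New}(\lambda)\cap\mathbb{N}^m$. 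Since every $f\in K[\![T]\!]$ can be written with $\lambda=1$, and $\trop(g)\leq 1$ holds because $\mathrm{New}(g)\subseteq\mathbb{R}_{\geq0}^m = \mathrm{New}(1)$, we get $K[\![T]\!]\subseteq K(\!(T)\!)^\circ$; that this inclusion is strict when $m\geq1$ is seen by exhibiting a genuine rational function with $\trop\le1$ but with a pole, e.g.\ $\frac{1}{1+t_1}$, whose numerator and denominator both have Newton polyhedron $\mathbb{R}_{\geq0}^m$.

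For the case $m=1$: here $\trop$ is a Krull valuation (the order on $V\mathbb{B}[T]\cong \mathbb{N}\cup\{\infty\}$ under $\leq$ is total, since the vertex polynomial of $\sum_{i\in A}t^i$ is the monomial $t^{\min A}$), so $K(\!(T)\!)^\circ$ is just the classical valuation ring $K[\![t]\!]$ of the $t$-adic valuation on $K(\!(t)\!)$, which is local with maximal ideal $(t)$. I would simply remark that $f\in K(\!(T)\!)^\circ$ iff $\trop(f)\le1$ iff $f$ has no pole at $0$, recovering $K[\![t]\!]$, and invoke that $K[\![t]\!]$ is a discrete valuation ring.

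For the case $m>1$, the obstruction — and the main point of the proposition — is that $\leq$ on $V\mathbb{B}[T]$ is only a partial order (Lemma~\ref{order_relation} and the discussion following Remark~\ref{rem_diff_enh}), so the set $V\mathbb{B}(T)^\circ = \{a : a\le 1\}$ is a subsemiring but the complement $\{a : a\not\le 1\}$ is not closed under multiplication, hence $K(\!(T)\!)^\circ$ has more than one maximal ideal. Concretely I would produce two elements $f,g\in K(\!(T)\!)$ that are non-units in $K(\!(T)\!)^\circ$ but whose sum $f+g$ is a unit: the natural candidates are $f=\frac{1}{1+t_1}\cdot t_1$ and $g = \frac{1}{1+t_2}\cdot t_2$ (or cleaner rational expressions built from $t_1$ and $t_2$ whose Newton polytopes are the segments from $0$ to $e_1$ and from $0$ to $e_2$ respectively), so that $\trop(f)$ and $\trop(g)$ are each incomparable to generators one would need for a common maximal ideal, while $f+g$ becomes invertible; checking that neither $f$ nor $g$ is a unit but $1/(f+g)$ still lies in $K(\!(T)\!)^\circ$ shows $K(\!(T)\!)^\circ$ is not local. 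For the failure of differentiality when $m>1$, I would exhibit $x\in K(\!(T)\!)^\circ$ with $\frac{\partial x}{\partial t_j}\notin K(\!(T)\!)^\circ$: differentiating can only enlarge the Newton polyhedron toward the origin (this is the content of the inequality $\mathrm{sp}\circ\Theta(e_i)\le\Theta(e_i)\circ\mathrm{sp}$ in Theorem~\ref{VBT_theorem_extended}(1)), and when $m>1$ one can arrange $\mathrm{New}(\lambda)$ to be cut out by a single slanted facet so that $\trop(\lambda)$ is not a monomial, whereupon $\partial_{t_j}$ applied to $1/\lambda$ produces a term whose tropicalization is no longer $\le\trop(\lambda)$; a small explicit example such as $x = \frac{1}{1+t_1+t_2}$ should make $\frac{\partial x}{\partial t_1}$ have a pole "worse than" $x$ in the partial-order sense. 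The hard part is choosing these witnessing rational functions so that the $\trop$-computations are transparent; once the right examples are in hand the verifications reduce to comparing Newton polyhedra via Lemma~\ref{order_relation}.
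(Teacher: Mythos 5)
Your high-level strategy is the paper's: show $K[\![T]\!]\subseteq K(\!(T)\!)^\circ$, identify $K(\!(T)\!)^\circ=K[\![T]\!]$ when $m=1$, and for $m>1$ exhibit two non-units whose sum is a unit (to kill locality) and an element whose partial derivative escapes $K(\!(T)\!)^\circ$ (to kill differentiality). The problem is that every concrete witness you propose is wrong, and for the same reason each time: your denominators $1+t_1$, $1+t_2$, $1+t_1+t_2$ all have nonzero constant term, hence are \emph{units in} $K[\![T]\!]$. So $\tfrac{1}{1+t_1}$, $\tfrac{t_1}{1+t_1}$, $\tfrac{t_2}{1+t_2}$, and $\tfrac{1}{1+t_1+t_2}$ are all formal power series, and their partial derivatives are too; none of them leaves $K[\![T]\!]$, so none can witness strictness of the extension, non-locality, or failure of differentiality.

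Checking this against the order of Lemma~\ref{order_relation} makes the failure concrete. For your proposed $f=\tfrac{t_1}{1+t_1}$ and $g=\tfrac{t_2}{1+t_2}$ one computes $\trop(f+g)=t_1\oplus t_2$, and $\tfrac{1}{t_1\oplus t_2}\leq 1$ would require $\{0\}\subseteq\widetilde{\{e_1,e_2\}}$, which fails since $\mathrm{New}(\{e_1,e_2\})$ lies in the halfspace $x_1+x_2\geq1$. So $f+g$ is not a unit and the pair does not violate locality. Also, $\tfrac{1}{1+t_1}\in K[\![T]\!]$ (indeed $\trop(\tfrac{1}{1+t_1})=1$, so it is a unit), so it cannot show the extension is proper; and your statement that the inclusion is strict ``when $m\geq1$'' contradicts the proposition itself, which asserts $K(\!(T)\!)^\circ=K[\![T]\!]$ for $m=1$.

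The fix is to use denominators with \emph{zero} constant term and a genuinely slanted Newton facet, the simplest being $t_1+t_2$. The paper's witnesses (for $m>1$, e.g.\ $m=2$) are: $\tfrac{t_1t_2}{t_1+t_2}\in K(\!(T)\!)^\circ\setminus K[\![T]\!]$ for strictness (note $t_1t_2\notin(t_1+t_2)$, so it is not a power series, yet $(1,1)\in\widetilde{\{e_1,e_2\}}$ so its trop is $\leq 1$); $x=\tfrac{t_1}{t_1+t_2}$ and $y=\tfrac{t_2}{t_1+t_2}$, both non-units since $e_2\notin e_1+\mathbb{N}^2$ and $e_1\notin e_2+\mathbb{N}^2$, with $x+y=1$ a unit on the nose; and $\partial_{t_1}\bigl(\tfrac{t_1}{t_1+t_2}\bigr)=\tfrac{t_2}{(t_1+t_2)^2}$, whose trop is $\tfrac{t_2}{t_1^2\oplus t_2^2}\not\leq 1$ since $e_2\notin\widetilde{\{2e_1,2e_2\}}$. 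Once you replace your denominators by $t_1+t_2$ the argument goes through exactly as you outlined.
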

  
    \begin{proof}
  If $m=1$, then $K(\!(T)\!) ^\circ=K[\![T]\!]$ and there is nothing to prove, so we shall suppose that $m>1$. Since every $a\in V\mathbb{B}[T]$ is less than  $1$, we have
   $V\mathbb{B}[T]\subset V\mathbb{B}(T)^{\circ}$. In particular, since $\trop(K[\![T]\!] )=V\mathbb{B}[T]$, we deduce that $K[\![T]\!] $ is a subring of $K(\!(T)\!)^{\circ}$. In general this inclusion is proper; for example, we have $\frac{t_1 t_2}{t_1+t_2}\in K(\!(T)\!)^{\circ}\setminus K[\![T]\!]$.
   
   To prove the second point, we use the fact that a ring is local if and only if the sum of any two non units is a non-unit, and we produce an explicit unit that decomposes as a sum of non-units when $m=2$.
Appealing to Lemma \ref{order_relation} again, we have $\frac{a}{b}<1$ in $V\mathbb{B}(T)$ if and only if $A=A^{\pr} \sqcup B^{\pr}$, 
with $A^{\pr}\subsetneq B$ and $B^{\pr}\subset (\widetilde{B}\setminus B)$. Thus $x=\frac{t_1}{t_1+t_2}$ and $y=\frac{t_2}{t_1+t_2}$ are non-units in $K(\!(T)\!) ^{\circ}$ for which $x+y=1$.

Finally, the ring $K(\!(T)\!)^\circ$ is not closed under differentiation; for example, when $m=2$, the element $\frac{t_1}{t_1+t_2}$ belongs to $K(\!(T)\!)^\circ$, while $\frac{\partial}{\partial t_1}(\frac{t_1}{t_1+t_2})=\frac{t_2}{(t_1+t_2)^2}$ does not.
  \end{proof}
  
  The following construction is inspired by one carried out in \cite{FT20} in the $m=1$ case.
For each weight vector $w=(w_1,\ldots,w_n)\in\mathbb{B}_m^n$, we define a map 
    \begin{equation}
    \label{translation_map}
    \begin{aligned}
    K(\!(T)\!) [x_{i,J}:i,J]&\longrightarrow K(\!(T)\!) ^\circ[x_{i,J}:i,J]\\
    P&\mapsto P_w.
    \end{aligned}
    \end{equation}

For every $1\leq i\leq m$ and $J\in \mathbb{N}^m$, we let $T(w_i,J)\in K[\![T]\!] $ denote the series $V(\Theta_{\mathbb{B}_m}(J)w_i)$. In terms of  the canonical section $e_K:\mathcal{P}(\mathbb{N}^m)\to K[\![T]\!]$, we have 
\[
T(w_i,J)=e_K(\text{Vert}((W_i-J)_{\geq0}))
\]
where $W_i=\text{Supp}(w_i)$ for $i=1,\ldots,n$. Given $P\in F_{m,n}$, if $\trop_w(P)=\frac{a}{b}\neq0$ with $A=\text{Supp}(a)$ and $B=\text{Supp}(b)$; we then set $T(\trop_w(P))^{-1}:=\frac{e_K(B)}{e_K(A)} \in K(\!(T)\!)$.

We now specify a differential polynomial $P_w$ by substituting every instance of $x_{i,J}$ in $P$ by $T(w_i,J)x_{i,J}$  and then multiplying the result by $T(\trop_w(P))^{-1}$:

\begin{equation*}
P_w(x_{i,J}):=\begin{cases}
  T(\trop_w(P))^{-1}P(T(w_i,J)x_{i,J}),&\text{ if }\trop_w(P)\neq0\\
  
 0,&\text{ if }\trop_w(P)=0. \\
\end{cases}
\end{equation*}

\begin{lemma}
\label{toric_degeneration}
The polynomial $P_w$ has coefficients in  $K(\!(T)\!)^{\circ}$.
\end{lemma}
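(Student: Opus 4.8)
The plan is to unwind the definition of $P_w$ one coefficient at a time and reduce the whole statement to a single order inequality in the semifield $V\mathbb{B}(T)$, namely that each term-wise tropicalization of $P$ lies below $\trop_w(P)$. Write $P=\sum_M a_M E_M$ with $a_M\in K(\!(T)\!)$ and $E_M=\prod_{i,J}x_{i,J}^{m_{i,J}}$. If $\trop_w(P)=0$ then $P_w=0$ by definition and there is nothing to prove, so assume $\trop_w(P)\neq 0$. First I would record that substituting $x_{i,J}\mapsto T(w_i,J)\,x_{i,J}$ into $E_M$ merely multiplies it by the scalar $\tau_M:=\prod_{i,J}T(w_i,J)^{m_{i,J}}\in K[\![T]\!]$; hence $P(T(w_i,J)x_{i,J})=\sum_M a_M\tau_M E_M$, and the coefficient of $E_M$ in $P_w$ is $c_M:=T(\trop_w(P))^{-1}a_M\tau_M\in K(\!(T)\!)$. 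The goal thus becomes: $\trop(c_M)\le 1$ in $V\mathbb{B}(T)$ for every $M$.

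Next I would compute the relevant tropicalizations. Since $T(w_i,J)=e_K(\text{Vert}((W_i-J)_{\ge 0}))$ has support the vertex set $\text{Vert}((W_i-J)_{\ge 0})=\text{Supp}\bigl(V(\Theta_{\mathbb{B}_m}(J)w_i)\bigr)$, and $V$ fixes vertex polynomials, we get $\trop(T(w_i,J))=V(\Theta_{\mathbb{B}_m}(J)w_i)$. Applying multiplicativity of $\trop$ on $K(\!(T)\!)$ (Corollary~\ref{extension_trop}) together with the fact that $V$ is a semiring homomorphism then yields $\trop(\tau_M)=\bigodot_{i,J}V(\Theta_{\mathbb{B}_m}(J)w_i)^{\odot m_{i,J}}=V(E_M(w))$. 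Likewise, if $\trop_w(P)=\tfrac{a}{b}$ with $A=\text{Supp}(a)$ and $B=\text{Supp}(b)$, then $A,B$ are vertex sets (being supports of elements of $V\mathbb{B}[T]$), so $\trop(e_K(A))=a$, $\trop(e_K(B))=b$, and hence $\trop(T(\trop_w(P))^{-1})=\tfrac{b}{a}=\trop_w(P)^{-1}$. Combining these, $\trop(c_M)=\trop_w(P)^{-1}\odot\bigl(\trop(a_M)\odot V(E_M(w))\bigr)$.

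Finally I would invoke the defining formula $\trop_w(P)=\bigoplus_M\bigl(\trop(a_M)\odot V(E_M(w))\bigr)$, a finite sum in the idempotent semifield $V\mathbb{B}(T)$. In any idempotent semiring each summand lies below the total sum with respect to the algebraic order, so $\trop(a_M)\odot V(E_M(w))\le\trop_w(P)$; multiplying this inequality by $\trop_w(P)^{-1}$, which is order-preserving since multiplication respects $\le$ in an idempotent semiring, gives $\trop(c_M)\le\trop_w(P)^{-1}\odot\trop_w(P)=1$. Thus each $c_M\in K(\!(T)\!)^{\circ}$, and since $M$ was arbitrary, $P_w\in K(\!(T)\!)^{\circ}[x_{i,J}:i,J]$.

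The only place that requires genuine care is the bookkeeping in the middle step: checking that $\trop$ transports the concrete substitution scalar $\tau_M$ and the normalizer $T(\trop_w(P))^{-1}$ to precisely the pieces appearing in the definition of $\trop_w$. Once those identifications are made, the conclusion is essentially the tautology that $\trop_w(P)$ is the join of the term-wise tropicalizations. I expect no real difficulty beyond keeping the semiring-versus-semifield distinction straight — in particular, that $V\mathbb{B}(T)$ is again idempotent and that its algebraic order is compatible with $\odot$, which is what legitimizes the final cancellation $\trop_w(P)^{-1}\odot\trop_w(P)=1$.
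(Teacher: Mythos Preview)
Your proof is correct and follows essentially the same route as the paper's own argument: both identify the coefficient of $E_M$ in $P_w$, reduce the claim $\trop(c_M)\le 1$ to the inequality $\trop(a_M)\odot V(E_M(w))\le\trop_w(P)$, and conclude by observing that $\trop_w(P)$ is by definition the (idempotent) sum, hence the least upper bound, of the term-wise tropicalizations. You simply spell out more of the intermediate identifications (computing $\trop(\tau_M)$ and $\trop\bigl(T(\trop_w(P))^{-1}\bigr)$ explicitly) than the paper does.
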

\begin{proof}
Let $P=\sum_Ma_ME_M$. We then have 
$P_w(x)= 
\sum_Mb_ME_M$, where $b_M=T(\trop_w(P))^{-1}a_M\prod_{i,J}(T(w_i,J))^{M_{i,J}}.$
We need to show that $\trop(b_M)\leq1$, i.e., that 
\[
\trop(a_M)\odot V(E_M(w))=
\trop \bigg(a_M\prod_{i,J}(T(w_i,J))^{M_{i,J}}\bigg)\leq\trop_w(P).
\]
Here the equality is clear, while the inequality holds because $\trop_w(P)$ is the least upper bound of $\bigoplus_M\trop_w(a_ME_M)$.
\end{proof}

Whenever $G\subset K(\!(T)\!) [x_{i,J}:i,J]$ is a differential ideal and the ring 
\begin{equation}
    \label{ring_for_model}
    R_w:=K(\!(T)\!) ^\circ[x_{i,J}:i,J]/(P_w:\:P\in G)
\end{equation}
is flat over $K(\!(T)\!)^\circ$,
there is an associated {\it (algebraic) initial degeneration} of the spectrum of $R=K(\!(T)\!) [x_{i,J}:i,J]/G$ given by specializing the scheme $\mathcal{X}(w):=\text{Spec}(R_w)$
to the fiber over some closed point $p\in \text{Spec}(K(\!(T)\!)^\circ)$.

\section{Computational aspects and open problems}
 \label{Section_CA}
 In this section we discuss issues at play in effectively computing tropical DA varieties, and we highlight several outstanding problems in need of resolution.
 
 The first issue concerns the finiteness of presentations of differential ideals, and tropical bases. As stated, the fundamental theorems \ref{EFT} and \ref{EEFT} are about  differential ideals $G$, 
which are infinite sets of differential polynomials. The Ritt-Raudenbush basis theorem (see \cite{BH19}) establishes that there are always finitely many $P_1,\ldots,P_s\in G$ for which $\text{Sol}(G)=\text{Sol}(P_1)\cap \cdots\cap \text{Sol}(P_s)$. 
It is not true in general, however, that the DA tropicalization of $\text{Sol}(G)$ is described by the supports of these generators; indeed, as explained in Remark~\ref{rem_paradigms}, an infinite set $\{\text{sp}(P):P\in G\}$ is generally required to recover the DA tropicalization.

A {\it tropical DA basis}
for $G$ is a (preferably smaller) subsystem $\Phi\subset G$ from which we may compute both the DA variety $\text{Sol}(\Phi)=\text{Sol}(G)$ and its corresponding DA tropicalization $\text{Sol}(\text{sp}(\Phi))=\text{Sol}(\text{sp}(G))$. It is worth noting that Groebner bases exist in both the tropical and the differential settings, and that tropical differential Groebner bases for ordinary differential tropical systems were introduced in \cite{HG19}, but at present these are lacking in the more general partial differential case. These would simultaneously represent adaptations of tropical Groebner bases to the differential setting, and of differential Groebner bases to the tropical setting.

\noindent {\it Problem 1.} Define a proper notion of tropical basis for the partial differential setting.

The second issue is that of effectively evaluating vertex polynomials. Given $P= \sum_Ma_ME_M$ in $K_{m,n}$ and $a\in {K}[\![T]\!]^n$, we have $P(a)=\sum_Ma_ME_M(a)$, and consequently $V(P(a))=\bigoplus_MV(a_M E_M(a))=\bigoplus_MV(a_M)\odot V(E_M(a))
$, as $V:\mathbb{B}[\![T]\!] \ra V\mathbb{B}[T]$ is a semiring homomorphism. As a result, solutions of $P$ may be formulated  in terms of the vertex polynomials $V(a_M)\odot V(E_M(a))$ (as per the original definition given in \cite{FGH20}).

Using the decomposition \eqref{New_decomposition}, it should be possible to compute the vertex polynomials $V(a)$ algorithmically by combining 
\begin{enumerate}
\item a routine that computes the vertex set $V(P)$ of a polyhedron $P$; and
    \item a routine that computes the distinguished polytopal representative $P=P_a$ in the decomposition \eqref{newton_decomposition} of the integral polyhedron $\text{New}(a)$.
\end{enumerate}
This is significant, as implementations of each of these routines are  available. 

A closely-related problem is that of producing concrete formulas for the binary operations $\oplus$ and $\odot$ on $V\mathbb{B}[T]$; this, in turn, relates to a number of subsidiary problems involving operations on polytopes. Recall from \eqref{uniqueness} given vertex polynomials $a$ and $b$ with supports $A$ and $B$ respectively, we have 
\[
V(ab)\subseteq \mathcal{V}(\Delta_a\otimes \Delta_b)\subseteq A+B,\quad \text{ and }\quad V(a+b)\subseteq \mathcal{V}(\Delta_a\oplus \Delta_b)\subseteq A\cup B
\]
where $\Delta_{\al}=\text{Conv}(\al)$.
Thus, as pointed out in \cite{DT}, one strategy for effectively computing $a\odot b=V(ab)$ involves first computing $A+B$, and then applying a second algorithm to remove all points that are not in $\mathcal{V}(\Delta_a\otimes \Delta_b)$.
On the other hand, computing $a \oplus b$ naturally leads to that of computing $\mathcal{V}(\Delta_a\oplus \Delta_b)$; the latter problem is referred to in the literature as {\it redundancy removal}.

\noindent {\it Problem 2.} Implement  algorithms for computing vertex polynomials and vertex sets.

Another issue we would like to highlight is that of convergence. Thus far, we have only considered solutions as vectors of formal power series in $K_m ^n$, but for geometric applications it would be useful to know whether they represent analytic functions $\Omega\rightarrow \mathbb{A}_K^{1,\text{an}}$ with a common domain $0\in\Omega\subset \mathbb{A}_K^{m,\text{an}}$, whenver $K$ is a Banach field. Possibilities are stratified according to whether or not the field $K$ is Archimedean.
\begin{enumerate}
\item[(1)] {\it The Archimedean case}. Here the only possibility is that ${K}=\mathbb{C}$ with the Euclidean topology; thus $\mathbb{A}_{{K}}^{m,\text{an}}=(\mathbb{C}_\infty^m,\mathcal{H})$, where $\mathcal{H}$ is the sheaf of holomorphic functions.
\item[(2)] {\it The non-Archimedean case}. As we mentioned in the introduction, an interesting choice of field is $K=\mathbb{Q}_p^{\text{alg}}$; and an interesting choice of topological space is the Berkovich space $\mathbb{A}_{\mathbb{C}_p}^{m,\text{an}}$ associated to the completion $\mathbb{C}_p$ of $\mathbb{Q}_p^{\text{alg}}$.
\end{enumerate}

\noindent
{\it Problem 3.} Study the analytic properties of lifts of tropical DA varieties.

\subsection{Seminorms, bis}
We saw in Proposition~\ref{Longue_prop} that the support map $\text{sp}:{K}_{m,n}\rightarrow \mathbb{B}_{m,n}$ is a $K$-algebra norm and also interacts in an interesting way with differentials. 
In particular, it may be useful in the future study of tropical differential algebra from the perspective of differential (idempotent) semirings.

\noindent {\it Question 1.} Is there a well-behaved notion of tropical differential ideal?

 The fundamental theorem characterizes the systems of tropical differential equations $\Sigma\subset \mathbb{B}_{m,n}$ whose solution sets   equal the tropicalizations of differential algebraic sets $\text{Sol}(G)$ over an algebraically closed uncountable field of characteristic zero. 

\noindent {\it Question 2.} Is there an axiomatic characterization of such tropical systems? 

Classically, a tropical prevariety is a finite intersection of tropical hypersurfaces; see \cite[p.102]{MS}. 
On the other hand, the concept of tropical DA hypersurface is clearly well-defined (and is illustrated explicitly in Examples~\ref{ex_trop_hyp} and \ref{ex_trop_hyp_2}).

\noindent {\it Question 3.} Is there a well-defined notion of tropical DA prevariety?

The theory of initial forms with respect to the non-Krull valuation trop introduced in section~\ref{Section_Initial_forms} also opens the door to a number of natural questions. 

\noindent {\it Question 4.} Is there a theory of Groebner bases adapted to the valuations trop: $K[\![T]\!] \rightarrow V\mathbb{B}[T]$ and trop:$K(\!(T)\!) \rightarrow V\mathbb{B}(T)$ when $m>1$?

In \cite{HG19} such a Groebner theory is developed when $m=1$, in which case trop is a Krull valuation.

In section~\ref{Section_Fromringtofield}, we extended the trop valuation from the domain $K[\![T]\!] $ to its quotient field $K(\!(T)\!)$. In doing so, we established in Proposition \ref{ring_of_integers} that whenever $m>1$ the ring of integers $K(\!(T)\!)^{\circ}$ is not a local ring, and that it properly contains $K[\![T]\!]$. Further, according to Lemma \ref{toric_degeneration}, 
there is a translation map $P\mapsto P_w$ associated to every given weight vector $w\in\mathbb{B}_m^n$ that sends a differential polynomial with coefficients in $K(\!(T)\!)$ to a differential polynomial with coefficients in $K(\!(T)\!)^{\circ}$. We correspondingly considered the spectra $\mathcal{X}(w):=\text{Spec}(R_w)$ for $R_w$ as in \eqref{ring_for_model}. Each $R_w$ potentially represents an {\it integral model} of $R$; it remains, however to check that it is flat.
 
 \noindent {\it Question 5.}
Given $w$, is $\mathcal{X}(w)$ {\it flat} over the ring of integers $K(\!(T)\!)^\circ$?


In forthcoming work \cite{FGH21} we will show that the answer is ``yes". A subsidiary question with obvious geometric implications would be to study the scheme $\text{Spec}(K(\!(T)\!)^{\circ})$ that parameterizes our initial degenerations. 

   Finally, in order to prove the missing item (1) of the fundamental theorem \ref{EFT} for $F_{m,n}$ when $m>1$, we require a differential enhancement in the sense of Remark \ref{rem_diff_enh} of the basic tropicalization scheme $\trop:K(\!(T)\!) \to V\mathbb{B}(T)$. See  \cite[Ex. 7.4]{FGH20} for a related discussion when $m=1$.
 
\noindent {\it Question 6.} For every positive integer $m \geq 1$, is there an analogue of item (1) of the fundamental theorem \ref{EFT} for the field $F_{m,n}$?

\subsection{An example}
 \label{Example}

To  illustrate how tropical differential algebra works, in this subsection we compute $P(a)$ for particular choices of $P\in \mathbb{B}[\![t,u]\!]\{x,y\}$ and $a\in \mathbb{B}_2=\mathbb{B}[\![t,u]\!]$; and we give a general strategy for computing the solution set of an arbitrary differential polynomial $P\in\mathbb{B}_{m,n}$. Accordingly, let $a=(a_1,a_2)=(t+t^2+tu+u^3,1+u+t^2u+t^3u^2)$ and let 
\begin{equation}
\label{concrete_example}
    P=(t+u)x_{(0,0)}y_{(1,1)}^3+(1+t^2u^2)x_{(1,0)}x_{(0,1)}+ty_{(1,0)}^2.
\end{equation}
Since there are comparatively few variables and the differential operators  involved are of low order, we will use standard partial derivative notation. We then have
\begin{itemize}
 \item $a_{M_1}E_{M_1}(a)=(t+u)a_1(\tfrac{\partial^2 a_2}{\partial t\partial u})^{3}=(t+u)(t+t^2+tu+u^3)(t+t^2u)^{3}$;
 \item $a_{M_2}E_{M_2}(a)=(1+t^2u^2)(\tfrac{\partial a_1}{\partial t })(\tfrac{\partial a_1}{\partial u})= (1+t^2u^2)(1+u+t)(t+u^2)$; and
 \item $a_{M_3}E_{M_3}(a)=t(\tfrac{\partial a_2}{\partial t})^2 =t(tu+t^2u^2)^2=t^3u^2(1+tu+t^2u^2)$.
 \end{itemize}
 
 The evaluation $P(a)$ is the union of three formal series $a_{M_i}E_{M_i}(a)$, $i=1,2,3$; they are displayed in Figure~\ref{Figura}. 
\begin{figure}[!htb]
    \centering
    \includegraphics[scale=0.65]{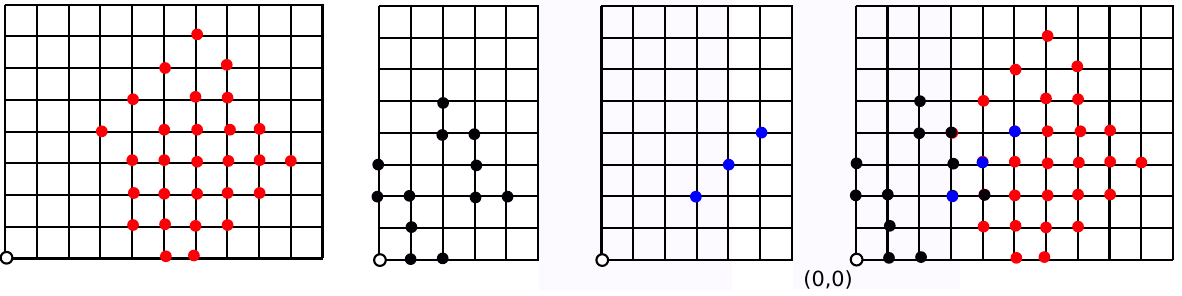}
    \caption{Values of $a_{M_i}E_{M_i}(a)$ for $i=1,2,3$, and $P(a)$.}
\label{Figura}
\end{figure}

\noindent Finally, note that $V(P(a))=V(a_{M_2})\odot V(E_{M_2}(a))=t+u^2$, and since 
$t$ and $u^2$ do not belong to the supports of either of the other two $a_{M_i}E_{M_i}(a)$, it follows that $a$ is not a solution of $P$.

We close by showing how to work with non-archimedean differential amoebae.

 \begin{rem}
 \label{non_arch_amoeba}
 The isomorphism of differential semirings $\text{Supp}:(\mathbb{B}[\![T]\!] ,D)\xrightarrow[]{}(\mathcal{P}(\mathbb{N}^m),D)$ from Remark~\ref{rem_difference} corresponds to 
 the map $\ell:{\rm S}\to {\rm S}^{\text{op}}$. As $\text{sp}:K_m \rightarrow\mathbb{B}_m$ is a seminorm, it follows that the amoeba of a DA variety $\text{Sol}(\Sigma)\subset K_m ^n$ is precisely its image under $\ell\circ\text{sp}:K_m ^n\rightarrow (\mathcal{P}(\mathbb{N}^m))^n$.
 
 \end{rem}

Now equip $\mathbb{N}^m$ with the usual product order. Given $X\subset \mathbb{N}^m$ and $J\in \mathbb{N}^m$, let $X({\geq J}):=\{I\in X:\:I\geq J\}$. Note that 
\begin{equation*}
\label{concrete_description_sup_der}
    (X-J)_{\geq0}=\begin{cases}
    X({\geq J})-J,&\text{ if }X({\geq J})\neq\emptyset; \text{ and}\\
    \emptyset,&\text{ otherwise}.\\
    \end{cases}
\end{equation*}

Any differential monomial $aE$, with $E=\prod_{1\leq i\leq n, J\in\mathbb{N}^m}x_{i,J}^{m_{i,J}}$ and $A=\text{Supp}(a)$, induces an operator $aE:\mathcal{P}(\mathbb{N}^m)^n\xrightarrow[]{}\mathcal{P}(\mathbb{N}^m)$ defined by 
\begin{equation*}
   aE(X_1,\ldots,X_n)= \begin{cases}
   \emptyset,\text{ if }X_i(\geq J)=\emptyset\text{ for some }(i,J)\text{ in }E;&\text{ and}\\
   [\sum_{i,J}m_{i,J}X_{i}(\geq J)-\sum_{i,J}m_{i,J}J]+\text{Vert}(A)&\text{ otherwise.}\\
    \end{cases}
\end{equation*}
We may rewrite the second expression as $\sum_{i,J}m_{i,J}X_{i}(\geq J)+[\text{Vert}(A)-\sum_{i,J}m_{i,J}J]$ provided we authorize vectors with entries in $\mathbb{Z}^n$. 
Executing this procedure using the polynomial $P$ from \eqref{concrete_example}, the maps $\mathcal{P}(\mathbb{N}^2)^2 \xrightarrow[]{}\mathcal{P}(\mathbb{N}^2)$ induced by its monomials are \begin{equation*}
    \begin{aligned}
    (t+u)x_{(0,0)}y_{(1,1)}^3(X,Y)&=X+3Y({\geq(1,1)})+\{(-2,-3),(-3,-2)\};\\
    (1+t^2u^2)x_{(1,0)}x_{(0,1)}(X,Y)&=X({\geq(1,0)})+X({\geq(0,1)})+(-1,-1); \text{ and}\\
    ty_{(1,0)}^2(X,Y)&=2Y({\geq(1,0)})+(-1,0).
    \end{aligned}
\end{equation*}

These equations impose restrictions on the support vector ($X,Y)\in\mathcal{P}(\mathbb{N}^2)^2$ of solutions of $P$. It remains to carry out a case-by-case analysis of the expression $V(P(a))=\bigoplus_MV(a_ME_M(a))$ based on the number of relevant summands, which must be at least two in order for $a$ to be a solution of $P$.   The casewise inspection required here is very similar to that required to explicitly describe a tropical hypersurface $V(f)$ as a union of semialgebraic sets in tropical affine space.
\appendix\label{Proof_of_Longue_prop}
\section{Proof of Proposition \ref{Longue_prop}}
Given integers $m>0$ and $n \geq 0$, we will show that $\text{sp}:{K}_{m,n}\rightarrow \mathbb{B}_{m,n}$ is a  ${K}$-algebra norm for which $\text{sp}\circ\Theta_{{K}_{m,n}}(e_i)\leq \Theta_{\mathbb{B}_{m,n}}(e_i)\circ \text{sp}$ for every $i=1,\ldots,m$.
 
\begin{proof}
The facts that $\text{sp}(P)=0$ if and only if $P=0$, that $\text{sp}(a)=1$ for every nonzero element $a\in {K}$, and that $\text{sp}\circ\Theta_{{K}_{m}}(J)(a)= \Theta_{\mathbb{B}_{m}}(J)\circ \text{sp}(a)$ for every $a\in {K}_m$ and $J\in\mathbb{N}^m$ are clear. 

Suppose now that $n=0$. Given elements $\alpha,\beta\in {K}[\![T]\!] $ with $\alpha=\sum_{I\in A}a_IT^I$ and $\beta=\sum_{I\in B}b_IT^I$, let $\text{sp}(\alpha)=\sum_{I\in A}T^I$ and $\text{sp}(\beta)=\sum_{I\in B}T^I$ in $\mathbb{B}[\![T]\!]$ denote their corresponding support series. We define
the {\it additive and multiplicative supports} 
$\text{as}(\alpha,\beta),\text{ms}(\alpha,\beta)$ of $(\alpha,\beta)$, respectively, by 
\begin{equation}
\label{tad_and_tmd}
\text{as}(\alpha,\beta):=\sum_{\substack{I\in A\cup B\\a_I+b_I=0}}T^I \text{ and } \text{ms}(\alpha,\beta):=\sum_{\substack{I\in AB\\\sum_{I=K+L}a_Kb_L=0}}T^I.
\end{equation}

\noindent These are the unique elements of $\mathbb{B}[\![T]\!] $ for which
\begin{enumerate}
\item $\text{sp}(\alpha +\beta)\cap \text{as}(\alpha,\beta)=\emptyset$ and $\text{sp}(\alpha +\beta)+\text{as}(\alpha,\beta)=\text{sp}(\alpha) +\text{sp}(\beta)$;  
\item $\text{sp}(\alpha \beta)\cap \text{ms}(\alpha,\beta)=\emptyset$ and $\text{sp}(\alpha\beta)+\text{ms}(\alpha,\beta)=\text{sp}(\alpha)\text{sp}(\beta).$
\end{enumerate}

\noindent The subadditive and submultiplicative properties of \text{sp} follow from the existence of 
additive and multiplicative supports, respectively; and the desired result when $n=0$ follows. 

More generally now, given $P=\sum_{M\in\Lambda(P)}a_ME_M$ and $Q=\sum_{M\in\Lambda(Q)}b_ME_M$, we have 
$P+Q=\sum_{M\in \Lambda(P)\cup \Lambda(Q)}(a_M+b_M)E_M$, 
and the fact that $\text{sp}(a_M+b_M)+\text{as}(a_M,b_M)=\text{sp}(a_M)+\text{sp}(b_M)$ for every $M\in\Lambda(P)\cup \Lambda(Q)$ implies that $\text{sp}(P+Q)\leq\text{sp}(P)+\text{sp}(Q)$.

Similarly, to prove that $\text{sp}(PQ)\leq\text{sp}(P)\text{sp}(Q)$, our point of departure is the fact that $PQ=\sum_{O\in \Lambda(P)+ \Lambda(Q)}c_OE_O$, where $c_O=\sum_{M+N=O}a_Mb_N$. We need to show that for any $O\in\Lambda(P)+\Lambda(Q)$, there is some $\gamma_O\in \mathbb{B}[\![T]\!]$ for which $\text{sp}(c_O)+\gamma_O=\sum_{M+N=O}\text{sp}(a_M)\text{sp}(b_N)$. This, in turn, follows from the facts that
\begin{enumerate}
\item $\text{sp}(c_O)+\text{as}_{M+N=O}(a_Mb_N)=\sum_{M+N=O}\text{sp}(a_Mb_N)$ and
\item $\text{sp}(a_Mb_N)+\text{ms}(a_M,b_N)=\text{sp}(a_M)\text{sp}(b_N)$.
\end{enumerate}

\noindent
The final condition on derivatives follows from the following observation: given $P=\sum_M\alpha_ME_M$ and $i=1,\ldots,m$, we have $\Theta_{{K}_{m,n}}(e_i) P=\sum_M(\Theta_{{K}_{m,n}}(e_i)\alpha_M)E_M+\sum_M\alpha_M(\Theta_{{K}_{m,n}}(e_i)E_M)$
and therefore 
\[
\text{sp}(\Theta_{{K}_{m,n}}(e_i) P)\leq \sum_M\text{sp}(\Theta_{{K}_{m,n}}(e_i)\alpha_M)E_M+\sum_M\text{sp}(\alpha_M)(\Theta_{{K}_{m,n}}(e_i)E_M).
\]
To conclude, we note that 
\[
(\Theta_{{K}_{m,n}}(e_i)E_M)=(\Theta_{\mathbb{B}_{m,n}}(e_i)E_M) \text{ and } \text{sp}\circ\Theta_{{K}_{m,n}}(e_i)(\alpha)= \Theta_{\mathbb{B}_{m,n}}(e_i)\circ \text{sp}(\alpha)
\]
for every $\alpha\in {K}_{m,0}$, as this holds when $n=0$. 
\end{proof}

\begin{coro}
 \label{coro_Longue_prop}
For every $w\in \mathbb{B}_m^n$, the  map
$\trop_w:K[\![T]\!] [x_{i,J}: i,J]\ra V\mathbb{B}[T]$ is a multiplicative $K$-algebra  seminorm.
\end{coro}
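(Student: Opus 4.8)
The plan is to realize $\trop_w$ as the composite of a (non-differential) ring homomorphism with the valuation $\trop:L[\![T]\!]\to V\mathbb{B}[T]$ of Theorem~\ref{VBT_theorem}\,(3), applied over a purely transcendental extension $L$ of $K$; once this is set up, all four required properties — the seminorm axioms and multiplicativity — become automatic.

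First I would introduce indeterminates $s_{i,J,K}$, one for each $1\le i\le n$, $J\in\mathbb{N}^m$ and $K\in\text{Supp}(\Theta_{\mathbb{B}_m}(J)w_i)$; set $L:=K(s_{i,J,K})$, still a field of characteristic zero; and put $v_{i,J}:=\sum_K s_{i,J,K}T^K\in L[\![T]\!]$, so that $\text{Supp}(v_{i,J})=\text{Supp}(\Theta_{\mathbb{B}_m}(J)w_i)$. Let $\sigma:K_{m,n}\to L[\![T]\!]$ be the $K$-algebra homomorphism determined by $x_{i,J}\mapsto v_{i,J}$ (this forgets the differential structure — crucially, the $v_{i,J}$ are \emph{not} taken to be the derivatives of a single lift of $w$). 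The heart of the argument is the identity $\trop_w=\trop\circ\sigma$. Granting it, $\trop_w$ is a multiplicative $K$-algebra seminorm since $\sigma$ is a ring homomorphism restricting to the identity on $K$ and $\trop$ is an $L$-algebra valuation: for instance $\trop_w(PQ)=\trop(\sigma(P)\sigma(Q))=\trop(\sigma(P))\odot\trop(\sigma(Q))=\trop_w(P)\odot\trop_w(Q)$, while subadditivity, together with $\trop_w(0)=0$ and $\trop_w(\lambda)=\trop(\lambda)=1$ for $\lambda\in K\setminus\{0\}$, follows the same way.

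To prove $\trop_w=\trop\circ\sigma$ it is enough to evaluate both sides on $P=\sum_M a_M E_M$, where $\sigma(P)=\sum_M a_M\prod_{i,J}v_{i,J}^{M(i,J)}$. The key observation is that no cancellation can occur in this sum: every $s$-monomial appearing in $a_M\prod_{i,J}v_{i,J}^{M(i,J)}$ involves exactly the multiset of labels $\{(i,J)\text{ with multiplicity }M(i,J)\}$, so the summands indexed by distinct $M$ have pairwise disjoint monomial supports in the $s$'s; and inside a single summand, the coefficient of $T^{K'}$ in $\prod_{i,J}v_{i,J}^{M(i,J)}$ is a sum of $s$-monomials with positive integer coefficients, hence again cancellation-free, and is nonzero exactly when $K'\in\text{Supp}(E_M(w))$. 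Consequently
\[
\text{Supp}(\sigma(P))=\bigcup_M\bigl(\text{Supp}(a_M)+\text{Supp}(E_M(w))\bigr),
\]
and since $V:\mathbb{B}[\![T]\!]\to V\mathbb{B}[T]$ is a semiring homomorphism and products in $\mathbb{B}[\![T]\!]$ have Minkowski-sum supports, this yields $\trop(\sigma(P))=\bigoplus_M\trop(a_M)\odot V(E_M(w))=\trop_w(P)$, as wanted.

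The main obstacle is precisely this identity, and in particular the need to use \emph{independent} generic series $v_{i,J}$ rather than the differential evaluation of a single lift: cancellations among differentiated monomials can strictly decrease the tropicalization of an evaluation, so that $\trop(P(\widehat w))\neq\trop_w(P)$ for every lift $\widehat w$ of $w$ in general — already $P=2x_{1,(0)}x_{1,(2)}-x_{1,(1)}^2$ evaluated at any lift of $w_1=t^2+t^3$ has tropicalization $t^3$, whereas $\trop_w(P)=t^2$. Passing to genuinely independent generic series is exactly what rules out such cancellations, since then the coefficient polynomials attached to different monomials live in disjoint sets of indeterminates. Alternatively, multiplicativity could be proved by a direct polyhedral argument using the unique-decomposition property of Proposition~\ref{char_of_prod} at the vertices of $\trop_w(P)\odot\trop_w(Q)$, in the style of the proof of Theorem~\ref{VBT_theorem}\,(3); I would present the transcendental-substitution argument, which I find cleaner.
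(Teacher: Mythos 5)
Your proof is correct and takes a genuinely different route from the paper's. The paper deduces multiplicativity of $\trop_w$ from that of $\trop$ by evaluating $P$ at a ``generic'' lift $\varphi\in K_m^n$ of $w$ and asserting that $\text{sp}(P)(w)=\text{sp}(P(\varphi))$ for such $\varphi$; your example $P=2x_{1,(0)}x_{1,(2)}-x_{1,(1)}^2$ with $w_1=t^2+t^3$ rightly shows this can fail for \emph{every} lift, because the differential substitution $x_{i,J}\mapsto\Theta(J)\varphi_i$ imposes algebraic relations among the substituted values that can force coefficient cancellations. Your substitution $\sigma:x_{i,J}\mapsto v_{i,J}$ into $L[\![T]\!]$ with $L=K(s_{i,J,K})$ replaces the differential evaluation by a plain $K$-algebra homomorphism into series with algebraically independent generic coefficients, which rules out all such cancellations: distinct summands $a_M\prod v_{i,J}^{M(i,J)}$ contribute $s$-monomials of disjoint types, and within a summand the $s$-monomial determines the decomposition $I=I'+K'$, so $\text{Supp}(\sigma(P))=\bigcup_M\bigl(\text{Supp}(a_M)+\text{Supp}(E_M(w))\bigr)$ and hence $\trop_w=\trop\circ\sigma$. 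All the claimed properties then fall out of Theorem~\ref{VBT_theorem}\,(3) applied over $L$ (still a field of characteristic zero, with the same target $V\mathbb{B}[T]$) precomposed with a ring homomorphism. This is both cleaner and, in light of your counterexample, more careful than the argument in the paper. The only point worth spelling out more explicitly is that $\text{Supp}(E_M(w))$, computed in $\mathbb{B}_m$, equals the Minkowski sum of the sets $\text{Supp}(\Theta(J)w_i)$ taken with multiplicities $M(i,J)$, so that nonvanishing of the coefficient of $T^{K'}$ in $\prod_{i,J} v_{i,J}^{M(i,J)}$ is exactly equivalent to $K'\in\text{Supp}(E_M(w))$.
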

\begin{proof}
The assignment $P\mapsto \trop_w(P)$ factors as $$P\mapsto\text{sp}(P)\mapsto \text{sp}(P)(w)\mapsto V(\text{sp}(P)(w)).$$
Thus $\trop_w(P)=0$ whenever $\text{sp}(P)(w)=0$.

We now appeal to Proposition~\ref{Longue_prop}, the algebra homomorphism properties of the evaluation map, and the linearity of $V$, in that order.
Explicitly, given $P,Q\in K[\![T]\!] [x_{i,J}: i,J]$ there are some $R,R'\in \mathbb{B}[\![T]\!] [x_{i,J}: i,J]$ for which $\text{sp}(P+Q)+R=\text{sp}(P)+\text{sp}(Q)$ and $\text{sp}(PQ)+R'=\text{sp}(P)\text{sp}(Q)$. Evaluating at $w$ now yields $\text{sp}(P+Q)(w)+R(w)=\text{sp}(P)(w)+\text{sp}(Q)(w)$ and $\text{sp}(PQ)(w)+R'(w)=\text{sp}(P)(w)\text{sp}(Q)(w)$ in $\mathbb{B}[\![T]\!]$; finally, the linearity of $V$ implies that $\trop_w$ is a $K$-algebra seminorm.

It remains to show that $\trop_w$ is multiplicative. Indeed, given  $P\in K[\![T]\!] [x_{i,J}: i,J]$ and  $w\in\mathbb{B}_m ^n$, every element of $\{\text{sp}(P(\varphi))\::\varphi\in K_m ^n,\:\:\text{sp}(\varphi)=w\}$
 is contained in $\text{sp}(P)(w)$. 
 Moreover, for a generic choice of $\varphi\in K_m ^n$ with support $w$, we have $\text{sp}(P)(w)=\text{sp}(P(\varphi))$, which implies that $\trop_w(P)=V(\text{sp}(P)(w))=V(\text{sp}(P(\varphi)))= \text{trop}(P(\varphi))$.
 
\noindent Now assume $P_3=P_1P_2$, and choose   $\varphi\in K_m ^n$ for which $\text{sp}(\varphi)=w$ and $\text{sp}(P_i(\varphi))=\text{sp}(P_i)(w)$ for $i=1,2,3$.
As evaluation and trop are multiplicative, we have 
\[
\trop_w(P_1P_2)=\text{trop}(P_3(\varphi))=\trop(P_1(\varphi))\odot \trop(P_2(\varphi))=\trop_w(P_1)\odot \trop_w(P_2)
\]
which finishes the proof.
 \end{proof}

\textbf{Acknowledgements}. 
The authors thank Lara Bossinger, Alicia Dickenstein, Jeffrey Giansiracusa, and Pedro Luis del \'Angel for interesting conversations and valuable comments on previous versions of this paper; and Yue Ren, from whom we first learned of the potential applications of these methods to $p$-adic differential equations during a BIRS talk he gave in June 2020.

\end{document}